\theoremstyle{plain}
\newtheorem{theorem}{Theorem}[section]
\newtheorem{lem}[theorem]{Lemma}
\theoremstyle{definition}
\newtheorem{assu}{Assumption}
\theoremstyle{remark}
\newtheorem*{remark}{Remark}
\newcommand{\ra}[1]{\renewcommand{\arraystretch}{#1}}
\definecolor{dark-red}{rgb}{0.4,0.15,0.15}
\definecolor{dark-blue}{rgb}{0.15,0.15,0.4}
\definecolor{medium-blue}{rgb}{0,0,0.5}
\newcommand{\norm}[1]{\left\lVert#1\right\rVert}
\DeclareMathOperator*{\argmin}{arg\,min}
\DeclareMathOperator*{\re}{Re}
\DeclareMathOperator*{\im}{Im}
\DeclareMathOperator*{\diag}{diag}
\DeclareMathOperator*{\dist}{dist}
\DeclareMathOperator*{\prox}{prox}
\begin{document} \title{Solving Phase Retrieval via Graph Projection Splitting} 
\author[*]{Ji Li\thanks{email: keelee@csrc.ac.cn, matliji@nus.edu.sg}}
\author[**]{Hongkai Zhao\thanks{email: zhao@math.uci.edu}}
\affil[*]{Beijing Computational Science Research Center, Beijing, China}
\affil[**]{Department of Mathematics, University of California, Irvine, CA, USA}
\date{\today}
\maketitle
\begin{abstract}
Phase retrieval with prior information can be cast as a nonsmooth and nonconvex optimization problem. To decouple the signal and measurement variables, we introduce an auxiliary variable and reformulate it as an optimization with an equality constraint.  We then solve the reformulated problem by graph projection splitting (GPS), where the two proximity subproblems and the graph projection step can be solved efficiently. With slight modification, we also propose a robust graph projection splitting (RGPS) method to stabilize the iteration for noisy measurements. Contrary to intuition, RGPS outperforms GPS with fewer iterations to locate a satisfying solution even for noiseless case. Based on the connection between GPS and Douglas-Rachford iteration, under mild conditions on the sampling vectors, we analyze the fixed point sets and provide the local convergence of GPS and RGPS applied to noiseless phase retrieval without prior information. For noisy case, we provide the error bound of the reconstruction. Compared to other existing methods, thanks for the splitting approach, GPS and RGPS can efficiently solve phase retrieval with prior information regularization for general sampling vectors which are not necessarily isometric. For Gaussian phase retrieval, compared to existing gradient flow approaches, numerical results show that GPS and RGPS are much less sensitive to the initialization. Thus they markedly improve the phase transition in noiseless case and reconstruction in the presence of noise respectively. GPS shows sharpest phase transition among existing methods including RGPS, while it needs more iterations than RGPS when the number of measurement is large enough. RGPS outperforms GPS  in terms of stability for noisy measurements. When applying RGPS to more general non-Gaussian measurements with prior information, such as support, sparsity and TV minimization, RGPS either outperforms state-of-the-art solvers or can be combined with state-of-the-art solvers to improve their reconstruction quality. 
\end{abstract}

\section{Introduction}

We consider the phase retrieval problem with prior information expressed as:
\begin{equation}
  \label{eq:prob}
 \begin{aligned}
  \min \quad& g(\bm{x})\\
  \text{s.t.}\quad &\lvert\bm{a}_i^*\bm{x}\rvert = b_i+\epsilon_i^{\text{noise}},i=1,\ldots,m,
\end{aligned}
\end{equation}
where the objective function $g(\bm{x})$ corresponds to the prior information, such as $\ell_1$ norm for sparsity, total variation for piecewise constant, or the indicator function of solution set $\mathcal{X}\subset \mathbb{C}^n$, $\{\bm{a}_i\}_{i=1}^m\in\mathbb{C}^n$ are the sampling vectors, the nonnegative $b_i$'s are amplitude measurements and $\epsilon_i^{\text{noise}}$'s are the corruption noise. Without loss of generality, we assume that $b_{\min}=\min\{b_1,\ldots,b_m\}>0$, as we can put the equalities with $b_i=0$ as constraints included in $g(\bm{x})$. Finding $\bm{x}$ amounts to solving a system of quadratic equations, which is generally an NP-hard problem. The main difficulty of solving~\eqref{eq:prob} stems from the lack of the phase information and the nonconvexity of the amplitude measurement constraints. When the sampling vectors are drawn from Fourier transform basis, problem~\eqref{eq:prob} is the so-called \emph{Fourier phase retrieval}, which has a wide range of imaging applications in science and engineering, such as X-ray crystallography~\cite{Millane1990},
electron microscopy~\cite{Misell1973}, X-ray diffraction
imaging~\cite{Shechtman2015}, optics~\cite{Kuznetsova1988} and
astronomy~\cite{Fienup1982}, just to name a few. When $\bm{a}_i$'s are drawn from (complex) Gaussian distribution, problem~\eqref{eq:prob} is called \emph{Gaussian phase retrieval}, which is the model problem in recent research of phase retrieval, due to its nice statistical properties that lead to provable theoretical results. 

Much efforts have been devoted to developing provable algorithms for Gaussian phase retrieval without prior information, i.e., $g(\bm{x})=0$. For Gaussian phase retrieval, the solution is unique up to a global phase offset in noiseless case ($\epsilon_i^{\text{noise}}=0$) and it is also stable in noisy case when the number of measurements $m=\mathcal{O}(n)$~\cite{Candes2012,Chen2015b}. There are many provable algorithms to locate the solution to~\eqref{eq:prob} from a good initial guess for Gaussian measurements. These algorithms include Wirtinger flow~\cite{Candes2015}, truncated Wirtinger flow~\cite{Chen2015b}, amplitude truncated flow (ATF)~\cite{Wang16}, reweighted amplitude flow (RAF)~\cite{Wang2017Solving} and alternating minimization~\cite{Netrapalli2013}. Most of them are based on gradient flows for different loss functions starting from a specific initialization. Thus the choice of step size is important to ensure convergence and achieve fast convergence rate. When ratio $m/n$ is large enough, a good initial guess can be generated by various initialization schemes, such as spectral method~\cite{Candes2015} and reweighted maximal correlation method~\cite{Wang2017Solving}. However, finding a good initial guess stably is not a simple task in general, for examples, when the number of Gaussian measurements is not large enough or the measurements are not Gaussian. 
The crucial dependence of a good initial guess  can be avoided by the lifting technique for quadratic programming. Convex semidefinite relaxation (SDR) approaches, such as PhaseLift~\cite{Candes2013} and PhaseCut~\cite{Waldspurger2015}, have been proposed to solve~\eqref{eq:prob}. However, the extended dimension of SDR is prohibitive for high dimensional phase retrieval applications. Another convexification algorithm PhaseLin is introduced with the help of anchor vector~\cite{Dhifallah2017Phase}, which plays the similar role as initialization in gradient flow algorithms. To reduce the high computation cost of PhaseLift, matrix-factorization based approach, IncrePR~\cite{Li2018},  with improved phase transition is developed. The main issue of these solvers is their inapplicability to tackle practical phase retrieval problems with non-Gaussian measurements, where a good initial guess can not be easily obtained for gradient flow solvers and the tightness of SDR does not hold for convex solvers. Although these methods can be adapted to the inclusion of a regularization term $g(\bm{x})$ in the objective, they usually fail to find a satisfactory solution due to the stagnation of nonconvex optimization without a good initial guess. 

For different types of sampling vectors, the uniqueness and stability of the solution to~\eqref{eq:prob} may differ. For Fourier phase retrieval, the uniqueness is almost guaranteed up to three trivial ambiguities -- translation, mirror shift, global phase offset -- and their combinations. To mitigate these difficulties and improve the algorithmic efficiency, some additional constraints on the solution are imposed, such as support set, real-valuedness, nonnegativity, and sparsity. With the prior information, the popular solver HIO (hybrid input-output) and RAAR are widely used to solve Fourier phase retrieval~\cite{Luke2004,Luke2003}. However, they do not apply in the case of non-isometric measurements with more general prior constraints, e.g., total variation regularization. Recently, RAAR has been adapted to nonisometric measurements~\cite{Li162}, but it does not support prior information. Thus these existing algorithms for non-Gaussian measurements have different kinds of restrictions.  In this paper, we propose an unified algorithmic framework to solve phase retrieval that supports general measurements and prior information simultaneously.

For noiseless phase retrieval~\eqref{eq:prob}, we stack the sampling vectors into a matrix $\bm{A}=[\bm{a}_1,\ldots,\bm{a}_m]\in\mathbb{C}^{n\times m}$, then the measurements can be written as $\lvert \bm{A}^*\bm{x}\rvert=\bm{b}$. Throughout this paper, we assume $\bm{A}^*$ is full column rank. With the indicator function $f(\bm{y}) = \mathbb{I}_{\{y:\lvert y\rvert=b\}}(\bm{y})$, problem~\eqref{eq:prob} can be reformulated as an unconstrained optimization to minimize $f(\bm{A}^*\bm{x})+g(\bm{x})$. To tackle the difficulty of the involvement of $\bm{A}^*$ in $f$, we consider the following splitting form:
\begin{equation}
\label{eq:split}
\begin{aligned}
  \min \quad & f(\bm{y}) + g(\bm{x})\\
  \text{s.t.}\quad & \bm{A}^*\bm{x}=\bm{y}.
\end{aligned}
\end{equation}
Equation~\eqref{eq:split} can be solved by the following standard ADMM:
\begin{equation}
  \label{eq:admm1}
  \begin{aligned}
    \bm{x}^{k+\frac{1}{2}}&=\argmin_{\bm{x}}\quad g(\bm{x})+\frac{\rho}{2}\norm{\bm{A}^*\bm{x}-(\bm{y}^{k}-\bm{\lambda}^k)}^2\\
    \bm{y}^{k+1} &=\prox_f\left(\bm{A}^*\bm{x}^{k+\frac{1}{2}}+\bm{\lambda}^k\right) \\
    \bm{\lambda}^{k+1}&=\bm{\lambda}^k+\bm{A}^*\bm{x}^{k+\frac{1}{2}}-\bm{y}^{k+1},
  \end{aligned}
\end{equation}
where $k$ is the iteration number. The proximity operator is defined as
\begin{equation*}
  \prox_{\phi}(\bm{y})=\argmin_{\bm{z}}\left(\phi(\bm{z})+\frac{\rho}{2}\norm{\bm{z}-\bm{y}}_2^2\right),
\end{equation*}
where we suppress the parameter $\rho$ in our notation.
Operator $\prox_{f}(\bm{z})$ is just the projection of $\bm{z}$ onto the set $\{\bm{y}\in\mathbb{C}^m\mid \lvert \bm{y}\rvert = \bm{b}\}$ with the expression
\begin{equation*}
  \prox_{f}(\bm{z}) =\bm{b}\circ \frac{\bm{z}}{\lvert \bm{z}\rvert}.
\end{equation*}
Note that the division is elementwise. When $\lvert z_i\rvert=0$, we set $z_i/\lvert z_i\rvert=0$.

Although solving the first subproblem is not straightforward in general, it can be solved easily in some special situations. For example, if $m=n$ and matrix $\bm{A}^*$ satisfies $\bm{A}\bm{A}^*=\bm{A}^*\bm{A}=\bm{I}$, then the first subproblem is equivalent to 
\begin{equation*}
  \bm{x}^{k+\frac{1}{2}}=\arg\min_{\bm{x}}\quad g(\bm{x})+\frac{\rho}{2}\norm{\bm{x}-\bm{A}(\bm{y}^{k}-\bm{\lambda}^k)}^2.
\end{equation*}
Furthermore, if $g(\bm{x})$ is the indicator function of a set, such as nonnegativeness, real-valuedness, the ADMM method is the same as the case considered in~\cite{Wen2012Alternating}. 

If $g(\bm{x})=0$ and $m\geq n$, the first subproblem is just a least-squares problem, we have $\bm{x}^{k+\frac{1}{2}}=(\bm{A}\bm{A}^*)^{-1}\bm{A}(\bm{y}^{k}-\bm{\lambda}^k)$. Let $\bm{y}_{\text{DR}}^k=\bm{\lambda}^k+\bm{y}^k$ and substitute it to~\eqref{eq:admm1}, we have
\begin{equation}
\label{eq:drg}
  \begin{aligned}
    \bm{x}^{k+\frac{1}{2}}&=(\bm{A}\bm{A}^*)^{-1}\bm{A}(2\bm{y}^{k}-\bm{y}_{\text{DR}}^k)\\
    \bm{y}^{k+1}&= \prox_f(\bm{A}^*\bm{x}^{k+\frac{1}{2}}+\bm{y}_{\text{DR}}^k-\bm{y}^k)\\
    \bm{y}_{\text{DR}}^{k+1}&= \bm{A}^*\bm{x}^{k+\frac{1}{2}}+\bm{y}_{\text{DR}}^k-\bm{y}^k.
  \end{aligned}
\end{equation}
So after one iteration, we have $\bm{y}^{k+1}=\prox_f(\bm{y}_{\text{DR}}^{k+1})$. With the relation, we have the equivalent Douglas-Rachford (DR) iteration
\begin{equation}
\bm{y}_{\text{DR}}^{k+1} = \bm{y}_{\text{DR}}^k+\bm{A}^*(\bm{A}\bm{A}^*)^{-1}\bm{A}\left(2\bm{b}\circ \frac{\bm{y}_{\text{DR}}^k}{\lvert \bm{y}_{\text{DR}}^k\rvert}-\bm{y}_{\text{DR}}^k\right)-\bm{b}\circ \frac{\bm{y}_{\text{DR}}^k}{\lvert \bm{y}_{\text{DR}}^k\rvert}.\tag{DR}
\end{equation}
Then we can generate the solution sequence $\bm{x}^k$ by the expression $(\bm{A}\bm{A}^*)^{-1}\bm{A}\prox_f(\bm{y}_{\text{DR}}^k)$. DR can also be reduced from solving a set feasible problem~\cite{Li162}. Although the above DR is simple, it can not deal with the general case when $g(x)\ne 0$. Later we will include this DR algorithm in comparison when $g(x)=0$ in our numerical experiments. 

To develop an efficient algorithm for the general case, we revert to graph projection splitting (GPS) to solve~\eqref{eq:split} instead, where each subproblem can be solved efficiently. To the best of our knowledge, it is the first time GPS is used to solve  phase retrieval problem~\eqref{eq:prob}. For phase retrieval problem with noise, we propose a robust GPS (RGPS) method. 
We would like to point out several advantages of using graph projection splitting in $(\bm{x},\bm{y})\in \mathbb{C}^{n+m}$ graph space 
\begin{itemize}
\item
Prior information for both signal and measurement can be easily incorporated.
\item
No difficult parameters, e.g., time step for gradient based method, to tune.
\item
Graph projection finds a pair $(\bm{x},\bm{y})$ that satisfies the exact relation in GPS.
\item
Graph projection updates $\bm{x},\bm{y}$ simultaneously without bias in an optimal way.
\item
$(\bm{I}+\bm{A}\bm{A}^*)^{-1}$ is better conditioned (than $(\bm{A}\bm{A}^*)^{-1}$).
\item
Using the distance to the graph in $(\bm{x},\bm{y})$ space in RGPS is more intrinsic and unbiased to measure the violation of the constraint.
\end{itemize}

Details of both algorithms are presented in Section 2. Actually RGPS can also be used for noiseless case and performs better than GPS near the solution. Using the equivalence relation between GPS and Douglas-Rachford, we show local convergence of GPS and RGPS by formulating them in single-variable-updating forms in Section 3 and defer some technical details in Section 4. Numerical experiments are conducted to demonstrate the performance of GPS and RGPS in Section 5. Section 6 provides the conclusion. 

\paragraph{Notation}

We use bold fonts for vectors and matrices. We denote the inner product in $\mathbb{C}^n$ by $\langle \bm{u},\bm{v}\rangle=\bm{u}^*\bm{v}$. $\bm{u}$ is said orthogonal to $\bm{v}$ if $\re\langle \bm{u},\bm{v}\rangle = 0$. The indicator function for a set $C$ is denoted as $\mathbb{I}_{C}$, which is defined by $\mathbb{I}_{C}(\bm{x})=0$ for $\bm{x}\in C$ and $\mathbb{I}_{C}(\bm{x})=+\infty$ otherwise. Notation $\bm{u}\circ\bm{v}$ and $\bm{u}/\bm{v}$ is the elementwise multiplication and division between vectors.

\paragraph{Reproducible research}

The accompany code for this paper can be found online at: \url{https://github.com/Chilie/GPS}.

\section{Algorithms}

\subsection{Graph Projection Splitting}

Instead of recognizing $\bm{y}$ as the auxiliary variable, we view splitting form~\eqref{eq:split} as an optimization with a stacked variable $(\bm{x},\bm{y})\in\mathbb{C}^{n+m}$. Then we apply graph projection splitting (GPS) to solve~\eqref{eq:split}, where the equality constraint indicts the stacked variable is in the graph set $C=\{(\bm{x},\bm{y})\in\mathbb{C}^{n+m}\mid \bm{A}^*\bm{x}=\bm{y}\}$. Given initial primal variables $\bm{x}^0$, $\bm{y}^0$ and dual variables $\bm{\lambda}^0,\bm{\nu}^0$, the GPS iteration is given by
\begin{subequations}
\label{eq:gps}
\begin{align}
 (\bm{x}^{k+\frac{1}{2}},\bm{y}^{k+\frac{1}{2}})&:=\Pi_{\bm{A}^*}(\bm{x}^{k}-\bm{\lambda}^k,\bm{y}^{k}-\bm{\nu}^k)\label{eq:gr}\\
  (\bm{x}^{k+1},\bm{y}^{k+1})&:=\left(\prox_{g}(\bm{x}^{k+\frac{1}{2}}+\bm{\lambda}^k),\prox_{f}(\bm{y}^{k+\frac{1}{2}}+\bm{\nu}^k)\right)\label{eq:px}\\
  (\bm{\lambda}^{k+1},\bm{\nu}^{k+1})&:=\left(\bm{\lambda}^k+\bm{x}^{k+\frac{1}{2}}-\bm{x}^{k+1},\bm{\nu}^k+\bm{y}^{k+\frac{1}{2}}-\bm{y}^{k+1}\right),
\end{align}
\end{subequations}
where operator $\Pi_{\bm{A}^*}$ denotes projection onto the graph set $C$ and implies the dependence on $\bm{A}^*$. The graph projection step is the most dominant computation, as the two proximal steps~\eqref{eq:px} are easily obtained for $f$ and $g$ in our problems. Typically, we initialize $\bm{\lambda}^0,\bm{\nu}^0$ with zero vectors, $\bm{x}^0$ with a random vector and $\bm{y}^0=\bm{A}^*\bm{x}^0$. The most appealing advantage of GPS is the \emph{parameter-free} iteration. 

The graph projection splitting names after the graph projection step in the algorithm. Actually, GPS can be interpreted as a specific ADMM in the stack variable $\bm{z}=(\bm{x},\bm{y})\in\mathbb{C}^{n+m}$. By setting $\phi(\bm{z})=f(\bm{y})+g(\bm{x})$, we are to minimize $\phi(\bm{z})$ with the constraint $\bm{z}\in C=\{\bm{z}=(\bm{x},\bm{y})|\bm{A}^*\bm{x}=\bm{y}\}$. The ADMM iteration for it is as follows
\begin{equation}
  \label{eq:admm}
    \begin{aligned}
    \bm{z}^{k+\frac{1}{2}}&:=\Pi_{\bm{A}^*}(\bm{z}^k-\bm{\zeta}^k)\\
    \bm{z}^{k+1}&:=\prox_{\phi}(\bm{z}^{k+\frac{1}{2}}+\bm{\zeta}^k)\\
    \bm{\zeta}^{k+1}&:=\bm{\zeta}^k+\bm{z}^{k+\frac{1}{2}}-\bm{z}^{k+1},
  \end{aligned}\tag{ADMM}
\end{equation}
where $\bm{\zeta}=(\bm{\lambda},\bm{\nu})\in\mathbb{C}^{n+m}$ is the dual variable. With the splitting of our problem and the separation of function $\phi(\bm{z})$, substituting the stacked variable $\bm{z}$ into ADMM iteration, we get GPS iteration~\eqref{eq:gps}. Note that for convex problem, the ADMM iteration ensures global convergence, and so does GPS. The ADMM/GPS can be directly applied to phase retrieval without modification, although the global convergence is not guaranteed. We will show its local convergence near the solution. Note that although GPS and the following RGPS are still based on nonconvex optimization, their applications to Gaussian phase retrieval exhibit global convergence numerically. 

\subsection{Robust GPS}

Note that the graph projection step~\eqref{eq:gr} is associated with the proximal operator for the indicator function $\mathbb{I}_{C}(\cdot)$. When we consider noisy measurements, we should change the function $f(\bm{y})$. One way is to replace the indicator function by least-squares function $f(\bm{y})=\frac{1}{2}\|\lvert\bm{y}\rvert-\bm{b}\|^2$, which shows instability in our experiments. Instead, we introduce a relaxed projection by replacing $\mathbb{I}_{C}(\cdot)$ with $\dist_{C}^2(\bm{s})$, which is square of the distance from $\bm{s}$ to its projection $\Pi_{C}(\bm{s})$. 

Hence, the graph projection step~\eqref{eq:gr} given by
\begin{equation*}
  (\bm{x}^{k+\frac{1}{2}},\bm{y}^{k+\frac{1}{2}}) = \argmin_{\bm{x},\bm{y}}\quad \mathbb{I}_{C}(\bm{x},\bm{y}) + \frac{\rho}{2}\norm{\bm{x}-\bm{c}^k}^2+\frac{\rho}{2}\norm{\bm{y}-\bm{d}^k}^2,
\end{equation*}
where $\bm{c}^k=\bm{x}^k-\bm{\lambda}^k,\bm{d}^k=\bm{y}^k-\bm{\nu}^k$,
is replaced by
\begin{equation*}
  (\bm{x}^{k+\frac{1}{2}},\bm{y}^{k+\frac{1}{2}}) = \argmin_{\bm{x},\bm{y}}\quad \frac{1}{2}{\dist}_{C}^2(\bm{x},\bm{y}) + \frac{\rho}{2}\norm{\bm{x}-\bm{c}^k}^2+\frac{\rho}{2}\norm{\bm{y}-\bm{d}^k}^2.
\end{equation*}

\begin{lem}
  Given $C\subset\mathbb{C}^n$ is a nonempty, closed convex set, then the distance function $\frac{1}{2}{\dist}_{C}^2(\bm{s})$ is differentiable, and its gradient is
  \begin{equation*}
    \nabla (\frac{1}{2}{\dist}_{C}^2(\bm{s})) = (\bm{I}-\Pi_{C})\bm{s}.
  \end{equation*}
\end{lem}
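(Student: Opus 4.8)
The plan is to establish the identity in two stages: first show that $\frac{1}{2}\dist_C^2$ is differentiable with the claimed gradient by a direct estimate on the difference quotient, exploiting the non-expansiveness of the projection $\Pi_C$ onto a closed convex set. Recall the standard fact that for a nonempty closed convex $C$, the metric projection $\Pi_C$ is well-defined, single-valued, firmly non-expansive, and characterized by the variational inequality $\re\langle \bm{s}-\Pi_C(\bm{s}),\, \bm{c}-\Pi_C(\bm{s})\rangle\le 0$ for all $\bm{c}\in C$. I would treat $\mathbb{C}^n$ as $\R^{2n}$ with the real inner product $\re\langle\cdot,\cdot\rangle$ so that ``gradient'' is unambiguous.

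First I would fix $\bm{s}$ and an arbitrary direction $\bm{h}$, write $p=\Pi_C(\bm{s})$ and $p_{\bm{h}}=\Pi_C(\bm{s}+\bm{h})$, and expand
\begin{equation*}
\tfrac{1}{2}\dist_C^2(\bm{s}+\bm{h}) - \tfrac{1}{2}\dist_C^2(\bm{s}) - \re\langle \bm{s}-p,\bm{h}\rangle.
\end{equation*}
Using $\dist_C^2(\bm{s})=\norm{\bm{s}-p}^2$ and the bound $\dist_C^2(\bm{s}+\bm{h})\le \norm{\bm{s}+\bm{h}-p}^2$ (since $p\in C$), the left side is $\le \norm{\bm{h}}^2$; using $\dist_C^2(\bm{s})\le\norm{\bm{s}-p_{\bm{h}}}^2$ together with the variational inequality at $\bm{s}+\bm{h}$ applied to $\bm{c}=p\in C$, one gets a matching lower bound of order $o(\norm{\bm{h}})$. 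This is exactly the textbook argument (e.g. following the firm non-expansiveness of $\Pi_C$); the two-sided squeeze gives Fréchet differentiability with $\nabla(\tfrac12\dist_C^2)(\bm{s}) = \bm{s}-p = (\bm{I}-\Pi_C)\bm{s}$.

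The second, lighter step is to note continuity of $\bm{s}\mapsto(\bm{I}-\Pi_C)\bm{s}$, which is immediate from the $1$-Lipschitz property of $\Pi_C$; this upgrades the pointwise differentiability to $C^1$ and confirms the formula is the genuine gradient, not merely a Gateaux derivative. I would also remark that the statement as printed says ``$C\subset\mathbb{C}^n$'' whereas it is invoked on the graph set $C\subset\mathbb{C}^{n+m}$; the proof is dimension-agnostic, so I would phrase it for $C\subset\mathbb{C}^N$ and apply it with $N=n+m$.

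The main obstacle is bookkeeping rather than depth: getting the lower bound on the difference quotient sharp enough to be $o(\norm{\bm{h}})$ requires combining the two variational inequalities (at $\bm{s}$ and at $\bm{s}+\bm{h}$) in the right way and controlling the cross term $\re\langle \bm{s}-p,\, p-p_{\bm{h}}\rangle$ via non-expansiveness of $\Pi_C$, so that $\norm{p-p_{\bm{h}}}\le\norm{\bm{h}}$ closes the estimate. Everything else is routine convex-analysis manipulation, and no property of $C$ beyond nonemptiness, closedness, and convexity is used.
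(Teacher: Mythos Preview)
Your proposal is correct and is the standard rigorous argument for this classical fact. It is, however, considerably more careful than what the paper actually does: the paper's proof consists of the single observation that $\tfrac{1}{2}\dist_C^2(\bm{s})=\tfrac{1}{2}\norm{(\bm{I}-\Pi_C)\bm{s}}^2$ and then simply asserts that the gradient is $(\bm{I}-\Pi_C)\bm{s}$, without addressing why the dependence of $\Pi_C(\bm{s})$ on $\bm{s}$ contributes nothing to the derivative. Your approach---squeezing the increment between $\norm{\bm{h}}^2$ from above (using $p\in C$ as a competitor) and an $o(\norm{\bm{h}})$ lower bound obtained from the variational inequality and non-expansiveness of $\Pi_C$---is exactly the textbook route and actually justifies the differentiability claim rather than presupposing it. What the paper's version buys is brevity (it is really invoking the result as known, or implicitly appealing to an envelope/Danskin-type argument); what yours buys is a self-contained proof that works in any real inner-product space, including the $\mathbb{C}^{n+m}\cong\mathbb{R}^{2(n+m)}$ setting where the lemma is applied. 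Your remark about the dimension mismatch in the statement is also apt.
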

\begin{proof}
  The proof is straightforward. Since 
  \begin{equation*}
    \frac{1}{2}{\dist}_{C}^2(\bm{s})=\frac{1}{2}\norm{(\bm{I}-\Pi_{C})\bm{s}}_2^2,
  \end{equation*}
the gradient is given by $(\bm{I}-\Pi_C)s$.
\end{proof}
Moreover, since $C=\{(\bm{x},\bm{y})\in\mathbb{C}^{n+m}\mid \bm{A}^*\bm{x}=\bm{y}\}$ is a linear space, we have the following properties.
\begin{lem}
\label{lem:2}
  The inverse of the operator $(\bm{I}-t\Pi_{C})$ for $t\in\mathbb{R},t\neq 1$ is $\bm{I}+\frac{t}{1-t}\Pi_{C}$.
\end{lem}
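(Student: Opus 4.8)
The plan is to use the fact that $C$ is a linear subspace, so that $\Pi_C$ is a linear (in the real-linear sense) idempotent operator: $\Pi_C^2 = \Pi_C$. Once this is available, the claimed inverse can be verified by a direct algebraic computation, treating $\Pi_C$ as a projection onto a subspace and exploiting idempotency to collapse powers of $\Pi_C$.

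Concretely, I would first record that since $C$ is a closed linear subspace of $\mathbb{C}^{n+m}$, the metric projection $\Pi_C$ satisfies $\Pi_C^2 = \Pi_C$ (and is self-adjoint, though only idempotency is needed here). Then set $B := \bm{I} + \frac{t}{1-t}\Pi_C$ and compute the composition $(\bm{I}-t\Pi_C)B$. Expanding,
\begin{equation*}
(\bm{I}-t\Pi_C)\left(\bm{I}+\frac{t}{1-t}\Pi_C\right) = \bm{I} + \frac{t}{1-t}\Pi_C - t\Pi_C - \frac{t^2}{1-t}\Pi_C^2.
\end{equation*}
Using $\Pi_C^2 = \Pi_C$, the $\Pi_C$-terms collect into the coefficient
\begin{equation*}
\frac{t}{1-t} - t - \frac{t^2}{1-t} = \frac{t - t(1-t) - t^2}{1-t} = \frac{t - t + t^2 - t^2}{1-t} = 0,
\end{equation*}
so the product is $\bm{I}$. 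The same computation in the opposite order gives $B(\bm{I}-t\Pi_C)=\bm{I}$ as well, since $\bm{I}$ and $\Pi_C$ commute. Hence $B$ is a two-sided inverse of $\bm{I}-t\Pi_C$, which is the assertion. The hypothesis $t\neq 1$ is exactly what makes $\frac{t}{1-t}$ well-defined, and it is also what one expects from the spectral picture: $\Pi_C$ has eigenvalues $0$ and $1$, so $\bm{I}-t\Pi_C$ has eigenvalues $1$ and $1-t$, and is invertible precisely when $1-t\neq 0$.

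There is essentially no obstacle here — the only thing to be careful about is that we are working over $\mathbb{C}$ but using real inner-product orthogonality (as fixed in the Notation paragraph, $\bm{u}\perp\bm{v}$ means $\re\langle\bm{u},\bm{v}\rangle=0$); however, since $C=\{(\bm{x},\bm{y})\mid \bm{A}^*\bm{x}=\bm{y}\}$ is a genuine complex-linear subspace, $\Pi_C$ is in fact complex-linear and idempotent regardless of which convention is adopted, so scalar multiplication by the real number $\frac{t}{1-t}$ poses no issue. The whole argument is the one-line idempotency calculation above; if desired one could alternatively derive the formula rather than verify it, by positing $(\bm{I}-t\Pi_C)^{-1}=\bm{I}+\alpha\Pi_C$, applying $\Pi_C$ on both sides, and solving $\alpha(1-t)=t$ for $\alpha$.
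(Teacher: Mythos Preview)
Your proof is correct and takes essentially the same approach as the paper: both verify the inverse by direct multiplication, using $\Pi_C^2=\Pi_C$ to collapse the $\Pi_C$-terms. You simply spell out the coefficient arithmetic and add some surrounding commentary, whereas the paper leaves it as a one-line check.
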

\begin{proof}
  It is easy to check that
  \begin{equation*}
    (\bm{I}-t\Pi_{C})(\bm{I}+\frac{t}{1-t}\Pi_{C})=\bm{I}.
  \end{equation*}
\end{proof}
\begin{theorem}
  The proximity operator for function $\frac{1}{2}{\dist}_{C}^2(\bm{s})$ is given by
  \begin{equation*}
    \prox_{\frac{1}{2}\dist_{C}^2}(\bm{s}) = \left(\frac{\rho}{1+\rho}\bm{I}+\frac{1}{1+\rho}\Pi_{C}\right)(\bm{s}).
  \end{equation*}
\end{theorem}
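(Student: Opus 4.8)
The plan is to characterize the minimizer defining the proximity operator by its first-order optimality condition, and then to read off the answer by inverting the resulting linear operator via Lemma~\ref{lem:2}.

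First I would note that $C = \{(\bm{x},\bm{y})\in\mathbb{C}^{n+m}\mid \bm{A}^*\bm{x}=\bm{y}\}$ is a nonempty closed convex set — indeed a linear subspace — so the preceding lemma applies and $\frac{1}{2}\dist_C^2$ is convex and differentiable with $\nabla(\frac{1}{2}\dist_C^2)(\bm{z}) = (\bm{I}-\Pi_C)\bm{z}$. Consequently the objective $\frac{1}{2}\dist_C^2(\bm{z}) + \frac{\rho}{2}\norm{\bm{z}-\bm{s}}^2$ defining $\prox_{\frac{1}{2}\dist_C^2}(\bm{s})$ is strictly convex and coercive, so its minimizer $\bm{z}^\star$ exists and is unique, and is characterized by the stationarity equation
\begin{equation*}
(\bm{I}-\Pi_C)\bm{z}^\star + \rho(\bm{z}^\star - \bm{s}) = \bm{0},
\qquad\text{i.e.}\qquad
\big((1+\rho)\bm{I} - \Pi_C\big)\bm{z}^\star = \rho\,\bm{s}.
\end{equation*}

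Next I would factor $(1+\rho)\bm{I} - \Pi_C = (1+\rho)\big(\bm{I} - \frac{1}{1+\rho}\Pi_C\big)$ and invoke Lemma~\ref{lem:2} with $t = \frac{1}{1+\rho}$; since $\rho>0$ we have $t \ne 1$, so the lemma applies, and because $\frac{t}{1-t} = \frac{1}{\rho}$ it gives $\big(\bm{I} - \frac{1}{1+\rho}\Pi_C\big)^{-1} = \bm{I} + \frac{1}{\rho}\Pi_C$. Therefore
\begin{equation*}
\bm{z}^\star = \frac{\rho}{1+\rho}\Big(\bm{I} + \frac{1}{\rho}\Pi_C\Big)\bm{s} = \frac{\rho}{1+\rho}\bm{s} + \frac{1}{1+\rho}\Pi_C\bm{s},
\end{equation*}
which is exactly the claimed identity.

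I do not expect any real obstacle here; the two points that deserve a sentence of justification are (i) that the regularized objective is strictly convex and coercive, which both guarantees $\prox$ is single-valued and licenses replacing the $\argmin$ by the stationarity equation, and (ii) that the hypothesis $t\ne 1$ of Lemma~\ref{lem:2} indeed holds, so the inversion is legitimate. As an alternative one can skip the inversion altogether and simply verify directly that the proposed $\bm{z}^\star$ satisfies the stationarity equation, using that $\Pi_C$ is linear and idempotent ($\Pi_C^2 = \Pi_C$) since $C$ is a subspace.
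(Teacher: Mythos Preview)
Your proposal is correct and follows essentially the same route as the paper: write the first-order optimality condition $(\bm{I}-\Pi_C)\bm{z}^\star + \rho(\bm{z}^\star-\bm{s})=\bm{0}$, factor out $(1+\rho)$, and invert via Lemma~\ref{lem:2} with $t=\frac{1}{1+\rho}$. Your additional remarks on strict convexity/coercivity and the alternative direct verification using $\Pi_C^2=\Pi_C$ are sound but not needed to match the paper's argument.
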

\begin{proof}
 The proximity point $\bm{x}$ satisfies 
\begin{equation*}
  \bm{x}=\prox_{\frac{1}{2}\dist_{C}^2}(\bm{s})=\argmin_{\bm{y}}\quad \frac{1}{2}{\dist}_{C}^2(\bm{y}) + \frac{\rho}{2}\norm{\bm{y}-\bm{s}}^2. 
\end{equation*}
According to the first-order optimal condition, $\bm{x}$ satisfies 
  \begin{equation*}
    (\bm{I}-\Pi_{C})\bm{x}+\rho(\bm{x}-\bm{s})=0.
  \end{equation*}
The optimal solution $\bm{x}$ is
\[
  \bm{x}=\frac{\rho}{1+\rho}\left(\bm{I}-\frac{1}{1+\rho}\Pi_{C}\right)^{-1}\bm{s}
   = \frac{\rho}{1+\rho}\left(\bm{I}+\frac{1}{\rho}\Pi_{C}\right)\bm{s}
  = \left(\frac{\rho}{1+\rho}\bm{I}+\frac{1}{1+\rho}\Pi_{C}\right)(\bm{s}),
\]
where Lemma~\ref{lem:2} is used.
\end{proof}

We obtain the robust graph projection, 
\begin{equation}
  \label{eq:smooth-gr}
  (\bm{x}^{k+\frac{1}{2}},\bm{y}^{k+\frac{1}{2}})=(t\bm{I}+(1-t)\Pi_{\bm{A}^*})(\bm{x}^{k}-\bm{\lambda}^k,\bm{y}^{k}-\bm{\nu}^k). 
\end{equation}
When it replaces~\eqref{eq:gr} in GPS, we call the resulting algorithm robust GPS (RGPS). To ensure the local convergence, the allowable range of values of $t$ falls into $(0,t_{\max})$, where the upper $t_{\max}$ depends on the singular values of $\bm{A}^*$, see Section 3.

\begin{remark}
  We propose the robust GPS motivated by the infeasibility of graph set in noisy case. Actually, as we will see, besides noisy case, RGPS can also be used for noiseless case and outperforms GPS around the solution. This phenomenon can be somewhat explained by the local convergence behaviors of the two methods in Section 3.
\end{remark}

Multiplying with $\bm{A}^*$ on both sides of the first step of~\eqref{eq:drg} and viewing $\bm{A}^*\bm{x}^{k+\frac{1}{2}}$ as a whole variable, we recognize the resulting step as the projection $2\bm{y}^k-\bm{y}_{\text{DR}}^k$ onto the range of $\bm{A}^*$, by the same argument, robust Douglas-Rachford (RDR) can be also proposed. The iteration reads as follows
\begin{equation}
\bm{y}_{\text{DR}}^{k+1} = \bm{y}_{\text{DR}}^k+(t\bm{I}+(1-t)\bm{A}^*(\bm{A}\bm{A}^*)^{-1}\bm{A})\left(2\bm{b}\circ \frac{\bm{y}_{\text{DR}}^k}{\lvert \bm{y}_{\text{DR}}^k\rvert}-\bm{y}_{\text{DR}}^k\right)-\bm{b}\circ \frac{\bm{y}_{\text{DR}}^k}{\lvert \bm{y}_{\text{DR}}^k\rvert}.\tag{RDR}
\end{equation}

\subsection{Graph Projection Step}

For $\bm{c}\in \mathbb{C}^n, \bm{d}\in \mathbb{C}^m$, the projection of $(\bm{c},\bm{d})$ onto the set graph set 
 $C=\{(\bm{x},\bm{y})|\bm{A}^*\bm{x}=\bm{y},\bm{x}\in\mathbb{C}^n,\bm{y}\in\mathbb{C}^m\}$ can be computed explicitly as follows. 
\begin{theorem}
  The projection $\Pi_{\bm{A}^*}(\bm{c},\bm{d})$ is given by the solution to the linear system
  \begin{equation}
    \label{eq:proj-eq}
    \begin{pmatrix}
      \bm{I} & \bm{A}\\ \bm{A}^* & -\bm{I}
    \end{pmatrix}
    \begin{pmatrix}
      \bm{x}\\\bm{y}
    \end{pmatrix}=
    \begin{pmatrix}
      \bm{c}+\bm{A}\bm{d}\\\bm{0}
    \end{pmatrix}.
  \end{equation}
\end{theorem}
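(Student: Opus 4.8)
The plan is to characterize the projection $\Pi_{\bm{A}^*}(\bm{c},\bm{d})$ by the standard variational argument: it is the unique minimizer of $\frac12\norm{\bm{x}-\bm{c}}^2 + \frac12\norm{\bm{y}-\bm{d}}^2$ subject to the linear constraint $\bm{A}^*\bm{x}=\bm{y}$. Since the feasible set $C$ is a nonempty closed subspace of $\mathbb{C}^{n+m}$, the minimizer exists and is unique, and it is determined by the first-order optimality (normal-equation) conditions. I would introduce a Lagrange multiplier $\bm{\mu}\in\mathbb{C}^m$ for the constraint $\bm{A}^*\bm{x}-\bm{y}=\bm{0}$ and write the stationarity conditions of the Lagrangian $\frac12\norm{\bm{x}-\bm{c}}^2+\frac12\norm{\bm{y}-\bm{d}}^2+\re\langle\bm{\mu},\bm{A}^*\bm{x}-\bm{y}\rangle$, which give $\bm{x}-\bm{c}+\bm{A}\bm{\mu}=\bm{0}$ and $\bm{y}-\bm{d}-\bm{\mu}=\bm{0}$, together with the primal feasibility $\bm{A}^*\bm{x}=\bm{y}$.

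From $\bm{y}=\bm{d}+\bm{\mu}$ and $\bm{x}=\bm{c}-\bm{A}\bm{\mu}$ I would eliminate $\bm{\mu}$ using $\bm{\mu}=\bm{y}-\bm{d}$, obtaining $\bm{x}+\bm{A}\bm{y}=\bm{c}+\bm{A}\bm{d}$, which is the first block row of~\eqref{eq:proj-eq}; the second block row $\bm{A}^*\bm{x}-\bm{y}=\bm{0}$ is exactly primal feasibility. This shows any minimizer solves~\eqref{eq:proj-eq}. Conversely, I would check that the $2\times2$ block matrix is invertible, so the linear system has a unique solution, which must therefore be the projection. Invertibility follows because a solution of the homogeneous system satisfies $\bm{x}=-\bm{A}\bm{y}$ and $\bm{y}=\bm{A}^*\bm{x}=-\bm{A}^*\bm{A}\bm{y}$, so $(\bm{I}+\bm{A}^*\bm{A})\bm{y}=\bm{0}$; since $\bm{I}+\bm{A}^*\bm{A}$ is Hermitian positive definite, $\bm{y}=\bm{0}$ and then $\bm{x}=\bm{0}$. (Equivalently, one may eliminate $\bm{y}$ to get the reduced normal equation $(\bm{I}+\bm{A}\bm{A}^*)\bm{x}=\bm{c}+\bm{A}\bm{d}$, which is consistent with the earlier remark that $(\bm{I}+\bm{A}\bm{A}^*)^{-1}$ is the relevant well-conditioned inverse.)

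There is essentially no hard part here; the only things to be careful about are (i) handling the complex inner product correctly so that the optimality conditions come out as the conjugate-transpose relations $\bm{A}$ versus $\bm{A}^*$ as written, rather than their conjugates, and (ii) noting that full column rank of $\bm{A}^*$ is not even needed for this statement — invertibility of the block system only requires $\bm{I}+\bm{A}^*\bm{A}\succ0$, which is automatic. I would present the proof as: (1) existence/uniqueness of the projection onto a closed subspace; (2) derive the KKT system via a multiplier; (3) algebraically rearrange into~\eqref{eq:proj-eq}; (4) verify the coefficient matrix is nonsingular, so the system characterizes the projection uniquely.
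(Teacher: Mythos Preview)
Your proposal is correct and follows essentially the same route as the paper: set up the constrained least-squares problem, write the KKT conditions with a Lagrange multiplier, and substitute to obtain~\eqref{eq:proj-eq}. You supply more detail than the paper (explicit invertibility of the block matrix, the complex inner-product caveat), but the argument is the same.
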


\begin{proof}
  The projection is to solve the following optimization problem with linear constraint
  \begin{align*}
    \min_{\bm{x},\bm{y}}\quad & \frac{1}{2}\norm{\bm{x}-\bm{c}}^2+\frac{1}{2}\norm{\bm{y}-\bm{d}}^2\\
    \text{s.t.}\quad & \bm{A}^*\bm{x}=\bm{y}.
  \end{align*}
According to the KKT condition, there exists $\bm{\lambda}$ such that
\begin{align*}
  \bm{x}-\bm{c}+\bm{A}\bm{\lambda}&=0\\
  \bm{y}-\bm{d}-\bm{\lambda} & =0\\
  \bm{A}^*\bm{x}&=\bm{y}.
\end{align*}
After some substitutions, we reach the conclusion.
\end{proof}

In most applications $m\geq n$, it is more efficient to compute the projection by the following
\begin{equation}
\label{eq:gpstep1}
\begin{aligned}
  \bm{x} &= (\bm{I}+\bm{A}\bm{A}^*)^{-1}(\bm{c}+\bm{A}\bm{d})\\
  \bm{y} &= \bm{A}^*\bm{x}.
\end{aligned}
\end{equation}
Using the Cholesky decomposition $\bm{L}\bm{L}^*$ of $\bm{I}+\bm{A}\bm{A}^*$, where $\bm{L}\in\mathbb{C}^{n\times n}$ is lower triangular matrix, we compute $\bm{x}$ by the following forward and backward substitutions,
\begin{subequations}
\begin{align*}
  \bm{L}\tilde{\bm{x}}&=\bm{c}+\bm{A}\bm{d}\tag{forward substitution}\\
  \bm{L}^*\bm{x} &= \tilde{\bm{x}}\tag{backward substitution}.
\end{align*}
\end{subequations}

\begin{remark}
  If we are to solve a phase retrieval problem with sparsity prior, $m$ may be less than $n$. In this case, we compute the projection as follows
\begin{equation}
\label{eq:gpstep2}
  \begin{aligned}
    \bm{y} &= (\bm{I}+\bm{A}^*\bm{A})^{-1}\bm{A}^*(\bm{c}+\bm{A}\bm{d})\\
    \bm{x} & = \bm{c} + \bm{A}(\bm{d}-\bm{y}).
  \end{aligned}
\end{equation}
\end{remark}

\begin{remark}
  If the sampling matrix $\bm{A}^*$ satisfies $\bm{A}\bm{A}^*=l\bm{I}$, such as coded diffraction pattern (CDP)~\cite{Candes2014}, in which $l$ is the number of Fourier measurements with phase mask. In this case, we have 
  \begin{equation*}
    (\bm{I}+\bm{A}\bm{A}^*)^{-1}=\frac{1}{l+1}\bm{I}.
  \end{equation*}
\end{remark}

Once the Cholesky decomposition is done once and the factor $\bm{L}$ is stored, the only computations involved are matrix-vector multiplication in the computation. The computation of $\bm{I}+\bm{A}\bm{A}^*$ needs $\mathcal{O}(mn^2)$ flops and the computation of Cholesky decomposition needs $\mathcal{O}(n^3)$ flops. At each iteration, the forward and backward substitution take $\mathcal{O}(n^2)$ flops. Each graph projection costs $\mathcal{O}(mn)$. Compared to other state-of-the-art nonconvex solvers, GPS  and RGPS have the same order of computation cost in each step excluding the additional precomputation of Cholesky decomposition which is done once for all. In addition to the parameter-free advantage, another main advantage of GPS and RGPS is its flexibility to handle general measurements with prior information for $\bm{x}$ due to the introduction of $\bm{y}$ as an independent variable. 

\section{Local Convergence Analysis}
\label{sec:loc}

In the section, we provide the local convergence of GPS and RGPS without regularization (for prior information), i.e. $g(\bm{x})=0$. Although one may argue, in this case, DR involving only $\bm{y}_{\text{DR}}^k$ should be used instead of GPS since GPS needs storage of both $\bm{x}^k$ and $\bm{\lambda}^k$. However, GPS seems to have the more tendency of escaping stagnation point than DR, which will be illustrated by numerical tests. Thus GPS (RGPS) achieves sharper phase transition than DR (RDR). Since this specific case is the basis of our splitting formulation, its convergence study will shed insight for our method in more general situations and we defer local convergence study with general regularization for future research. Hereafter, in the following convergence analysis, $g(\bm{x})=0$ is always assumed. 

For our analysis, we need assumptions of the given sampling vectors to ensure the uniqueness of solution to phase retrieval and related magnitude retrieval up to some trivial ambiguities.
\begin{assu}
\label{assu:1}
  Throughout the paper, the following requirements of sampling vectors hold:
  \begin{enumerate}
  \item measurement matrix $\bm{A}^*$ has full column rank.
\item the solution to \emph{phase retrieval} problem~\eqref{eq:prob} is unique up to the trivial ambiguities, including translation shift, mirror flipping and global phase shift.\footnote{This is the general case for Fourier phase retrieval, the global phase shift is the only ambiguity for Gaussian phase retrieval.} The solution up to these ambiguities is denoted by $\bm{x}^{\natural}$ in our paper.
  \item the solution to \emph{magnitude retrieval} problem
    \begin{equation*}
      \begin{aligned}
        \text{find}\quad & \bm{x}\in\mathbb{C}^n\\
        \text{s.t.}\quad & \frac{\bm{a}_i^*\bm{x}}{\lvert\bm{a}_i^*\bm{x}\rvert}=\pm\frac{\bm{a}_i^*\bm{x}^{\natural}}{\lvert\bm{a}_i^*\bm{x}^{\natural}\rvert}, \quad i=1,2, \ldots, m
      \end{aligned}
    \end{equation*}
is unique up to a constant magnitude difference, where the $\pm$ sign is element-by-element, i.e., its solution is $c\bm{x}^{\natural}, c\in \mathbb{R}\setminus\{0\}$.
  \end{enumerate}
\end{assu}

The assumptions \emph{almost} hold for \emph{oversampling} Fourier phase retrieval, see~\cite{Hayes1982}. And we believe they are satisfied for Gaussian phase retrieval, while the uniqueness of magnitude retrieval needs proof.
\subsection{Equivalence of GPS and Douglas-Rachford}

The connection of~\eqref{eq:admm} and Douglas-Rachford (DR) is well-known. As DR involves only one sequence, it is easier to study than~\eqref{eq:admm}. To see this connection, we take $ \bm{z}_{\text{DR}}^k=\bm{z}^k+\bm{\zeta}^k$ and substitute it into~\eqref{eq:admm}, we have
  \begin{align*}
    \bm{z}^{k+\frac{1}{2}}&:=\Pi_{\bm{A}^*}(2\bm{z}^k-\bm{z}_{\text{DR}}^k)\\
    \bm{z}^{k+1}&:=\prox_{\phi}(\bm{z}^{k+\frac{1}{2}}+\bm{z}_{\text{DR}}^k-\bm{z}^k)\\
    \bm{z}_{\text{DR}}^{k+1}&:=\bm{z}^{k+\frac{1}{2}}+\bm{z}_{\text{DR}}^k-\bm{z}^{k}.
  \end{align*}
After one iteration, we have the relation $\bm{z}^{k+1}=\prox_{\phi}(\bm{z}_{\text{DR}}^{k+1})$. So the iteration becomes:
  \begin{align*}
    \bm{z}^{k+\frac{1}{2}}&:=\Pi_{\bm{A}^*}(2\prox_{\phi}(\bm{z}_{\text{DR}}^k)-\bm{z}_{\text{DR}}^k)\\
    \bm{z}_{\text{DR}}^{k+1}&:=\bm{z}_{\text{DR}}^k+\bm{z}^{k+\frac{1}{2}}-\prox_{\phi}(\bm{z}_{\text{DR}}^k).
  \end{align*}
It is easy to see the iteration falls into DR iteration. Thus sequence $\{\bm{z}^k\}$ in~\eqref{eq:admm} can be generated by the following scheme
\begin{subequations}
\begin{align}
  \bm{z}_{\text{DR}}^{k+1}&=\bm{z}_{\text{DR}}^k + \Pi_{\bm{A}^*}\left(2\prox_{\phi}(\bm{z}_{\text{DR}}^k)-\bm{z}_{\text{DR}}^k\right)-\prox_{\phi}(\bm{z}_{\text{DR}}^k)\label{eq:dr}\\
  \bm{z}^{k+1}&=\prox_{\phi}(\bm{z}_{\text{DR}}^{k+1}).\label{eq:drp}
\end{align}
\end{subequations}
The obvious benefit is that we deal with a single-variable in DR iteration which is helpful for the local convergence study of GPS and RGPS for phase retrieval problem~\eqref{eq:prob}.

We let $\bm{z}_{\text{DR}}^k=
\begin{pmatrix}
  \bm{x}_{\text{DR}}^k\\\bm{y}_{\text{DR}}^k
\end{pmatrix}
$ be the iterative sequence of DR, where $\bm{x}_{\text{DR}}^k\in\mathbb{C}^n$ and $\bm{y}_{\text{DR}}^k\in\mathbb{C}^m$. By equality~\eqref{eq:drp}, we have $\bm{x}^k = \bm{x}_{\text{DR}}^k$. The DR iteration of GPS~\eqref{eq:gps} is 
\begin{equation}
\label{eq:iter}
  \begin{pmatrix}
    \bm{x}_{\text{DR}}^{k+1}\\\bm{y}_{\text{DR}}^{k+1}
  \end{pmatrix}=  \begin{pmatrix}
    \bm{x}_{\text{DR}}^{k}\\\bm{y}_{\text{DR}}^{k}
  \end{pmatrix}+\bm{M}\left(2\prox_{\phi}\begin{pmatrix}
    \bm{x}_{\text{DR}}^{k}\\\bm{y}_{\text{DR}}^{k}
  \end{pmatrix}-\begin{pmatrix}
    \bm{x}_{\text{DR}}^{k}\\\bm{y}_{\text{DR}}^{k}
  \end{pmatrix}\right)-\prox_{\phi}\begin{pmatrix}
    \bm{x}_{\text{DR}}^{k}\\\bm{y}_{\text{DR}}^{k}
  \end{pmatrix},
\end{equation}
where 
\begin{equation*}
  \bm{M} =
  \begin{pmatrix}
    \bm{I} & 0\\0 & \bm{A}^*
  \end{pmatrix}
  \begin{pmatrix}
    (\bm{I}+\bm{A}\bm{A}^*)^{-1} & (\bm{I}+\bm{A}\bm{A}^*)^{-1}\\(\bm{I}+\bm{A}\bm{A}^*)^{-1} & (\bm{I}+\bm{A}\bm{A}^*)^{-1}
  \end{pmatrix}\begin{pmatrix}
    \bm{I} & 0\\0 & \bm{A}
  \end{pmatrix}
\end{equation*}
is the corresponding matrix for the graph projection operator for the case $m\geq n$. Hereafter, without specific note, we assume $m\geq n$. 

From the Cholesky decomposition $\bm{I}+\bm{A}\bm{A}^*=\bm{L}\bm{L}^*$, we have $(\bm{I}+\bm{A}\bm{A}^*)^{-1}=(\bm{L}^{-1})^*\bm{L}^{-1}$. The matrix $\bm{M}$ can be expressed as
\begin{equation*}
  \bm{M}=
  \begin{pmatrix}
    (\bm{L}^{-1})^*\\\bm{A}^*(\bm{L}^{-1})^*
  \end{pmatrix}
  \begin{pmatrix}
    \bm{L}^{-1} & \bm{L}^{-1}\bm{A}
  \end{pmatrix}:=\bm{C}^*\bm{C}.
\end{equation*}
It can be easily verified that matrix $\bm{C}^*$ is isometric, i.e., $\bm{C}\bm{C}^*=\bm{I}$.

\subsection{Fixed Point Set}
Now we derive the fixed point set of~\eqref{eq:iter}, and investigate the local convergence around the fixed point. This analysis can be adapted to investigate the local convergence of DR which we omit here.

Denote the solution to~\eqref{eq:prob} by $\bm{x}^{\natural}$, we define the set 
  \begin{equation*}
    \mathcal{F} = \left\{
    \begin{pmatrix}
      \alpha\bm{x}^{\natural}\\
      \alpha(\bm{b}+\bm{\theta})\circ\frac{\bm{y}^{\natural}}{\lvert\bm{y}^{\natural}\rvert}
    \end{pmatrix}\Bigm| \lvert \alpha\rvert = 1, \bm{\theta}\in\mathbb{R}^m, \bm{A}(\bm{\theta}\circ\frac{\bm{y}^{\natural}}{\lvert\bm{y}^{\natural}\rvert})=0 \text{ and } \bm{b}+\bm{\theta}\geq 0
\right\},
\end{equation*}
where $\bm{y}^{\natural}=\bm{A}^*\bm{x}^{\natural}$ with $\lvert \bm{y}^{\natural}\rvert=\bm{b}> 0$.

\begin{lem}
  Upon Assumption~\ref{assu:1}, the fixed point set of iteration scheme~\eqref{eq:iter} is equal to $\mathcal{F}$.
\end{lem}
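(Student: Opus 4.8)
The plan is to show the two inclusions $\mathcal{F}\subseteq\mathrm{Fix}$ and $\mathrm{Fix}\subseteq\mathcal{F}$ for the DR-type map in~\eqref{eq:iter}. Write $T(\bm{z}_{\text{DR}})=\bm{z}_{\text{DR}}+\bm{M}(2\prox_{\phi}(\bm{z}_{\text{DR}})-\bm{z}_{\text{DR}})-\prox_{\phi}(\bm{z}_{\text{DR}})$. Since here $g=0$, we have $\prox_{\phi}(\bm{x}_{\text{DR}},\bm{y}_{\text{DR}})=(\bm{x}_{\text{DR}},\prox_f(\bm{y}_{\text{DR}}))=(\bm{x}_{\text{DR}},\bm{b}\circ\bm{y}_{\text{DR}}/\lvert\bm{y}_{\text{DR}}\rvert)$. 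A point $\bm{z}_{\text{DR}}$ is fixed iff $\bm{M}(2\prox_{\phi}(\bm{z}_{\text{DR}})-\bm{z}_{\text{DR}})=\prox_{\phi}(\bm{z}_{\text{DR}})$, i.e. the proximal point $\prox_{\phi}(\bm{z}_{\text{DR}})$ lies in the range of $\bm{M}$ (which equals the graph set $C$, since $\bm{M}$ is the projector $\Pi_{\bm{A}^*}$) \emph{and} is the graph projection of $2\prox_{\phi}(\bm{z}_{\text{DR}})-\bm{z}_{\text{DR}}$. Equivalently, setting $\bm{p}=\prox_{\phi}(\bm{z}_{\text{DR}})$, fixedness says $\bm{p}\in C$ and $\bm{p}-(2\bm{p}-\bm{z}_{\text{DR}})=\bm{z}_{\text{DR}}-\bm{p}\perp C$, i.e. $\bm{z}_{\text{DR}}-\bm{p}\in C^{\perp}$. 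So the fixed-point analysis reduces to: find all $\bm{z}_{\text{DR}}$ such that $\bm{p}:=(\bm{x}_{\text{DR}},\bm{b}\circ\bm{y}_{\text{DR}}/\lvert\bm{y}_{\text{DR}}\rvert)\in C$ and $\bm{z}_{\text{DR}}-\bm{p}\in C^{\perp}$. Note that $\bm{x}$-component of $\bm{p}$ equals $\bm{x}_{\text{DR}}$, so the $\bm{x}$-part of $\bm{z}_{\text{DR}}-\bm{p}$ is $\bm{0}$; writing $C^{\perp}=\{(\bm{A}\bm{\mu},-\bm{\mu}):\bm{\mu}\in\mathbb{C}^m\}$, this forces $\bm{A}\bm{\mu}=\bm{0}$, hence the $\bm{x}$-equations are automatically consistent, while the $\bm{y}$-part reads $\bm{y}_{\text{DR}}-\bm{b}\circ\bm{y}_{\text{DR}}/\lvert\bm{y}_{\text{DR}}\rvert=-\bm{\mu}$ with $\bm{A}\bm{\mu}=\bm{0}$ and $\bm{A}^*\bm{x}_{\text{DR}}=\bm{b}\circ\bm{y}_{\text{DR}}/\lvert\bm{y}_{\text{DR}}\rvert$.

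The key algebraic step is to parametrize solutions of $\bm{y}_{\text{DR}}-\bm{b}\circ\bm{y}_{\text{DR}}/\lvert\bm{y}_{\text{DR}}\rvert=-\bm{\mu}$ componentwise. For each $i$, $y_{\text{DR},i}(1-b_i/\lvert y_{\text{DR},i}\rvert)=-\mu_i$; since $b_i>0$, $y_{\text{DR},i}$ and its phase-normalized version $y_{\text{DR},i}/\lvert y_{\text{DR},i}\rvert$ share the same phase, so $\mu_i$ is a real multiple of $y_{\text{DR},i}/\lvert y_{\text{DR},i}\rvert$; writing $\mu_i=-\theta_i\, y_{\text{DR},i}/\lvert y_{\text{DR},i}\rvert$ with $\theta_i\in\mathbb{R}$ gives $\lvert y_{\text{DR},i}\rvert = b_i+\theta_i$, which must be nonnegative, and $\bm{A}\bm{\mu}=0$ becomes $\bm{A}(\bm{\theta}\circ \bm{v})=0$ where $\bm{v}=\bm{y}_{\text{DR}}/\lvert\bm{y}_{\text{DR}}\rvert$ is the common phase vector of $\bm{y}_{\text{DR}}$ and of $\bm{b}\circ\bm{y}_{\text{DR}}/\lvert\bm{y}_{\text{DR}}\rvert$. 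Now the graph constraint $\bm{A}^*\bm{x}_{\text{DR}}=\bm{b}\circ\bm{v}$ says $\bm{x}_{\text{DR}}$ solves a magnitude/phase-matching problem: $\bm{a}_i^*\bm{x}_{\text{DR}}$ has modulus $b_i$ and phase equal to that of $y_{\text{DR},i}$. This is where Assumption~\ref{assu:1}(2)–(3) enters: matching the measured moduli $b_i$ exactly and having a consistent phase pattern $\bm{v}$ forces $\bm{x}_{\text{DR}}=\alpha\bm{x}^{\natural}$ for some $\lvert\alpha\rvert=1$ (up to the trivial ambiguities folded into $\bm{x}^{\natural}$), whence $\bm{v}=\alpha\, \bm{y}^{\natural}/\lvert\bm{y}^{\natural}\rvert$ and $\bm{y}_{\text{DR}}=\alpha(\bm{b}+\bm{\theta})\circ\bm{y}^{\natural}/\lvert\bm{y}^{\natural}\rvert$ with $\bm{A}(\bm{\theta}\circ\bm{y}^{\natural}/\lvert\bm{y}^{\natural}\rvert)=0$ and $\bm{b}+\bm{\theta}\ge0$ — exactly the description of $\mathcal{F}$. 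Conversely, for any element of $\mathcal{F}$ one checks directly that $\bm{p}\in C$ (because $\bm{A}^*(\alpha\bm{x}^{\natural})=\alpha\bm{y}^{\natural}=\alpha\bm{b}\circ\bm{y}^{\natural}/\lvert\bm{y}^{\natural}\rvert$, matching the phase-normalized $\bm{y}$-part) and $\bm{z}_{\text{DR}}-\bm{p}=(\bm{0},\alpha\bm{\theta}\circ\bm{y}^{\natural}/\lvert\bm{y}^{\natural}\rvert)$ lies in $C^{\perp}$ since $\bm{A}(\alpha\bm{\theta}\circ\bm{y}^{\natural}/\lvert\bm{y}^{\natural}\rvert)=0$; so $T$ fixes it.

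I expect the main obstacle to be the careful handling of the phase/modulus matching argument, i.e. deducing $\bm{x}_{\text{DR}}=\alpha\bm{x}^{\natural}$ from $\lvert\bm{a}_i^*\bm{x}_{\text{DR}}\rvert=b_i$ together with the phase-consistency condition encoded by $\bm{v}$. One must be scrupulous about two points: first, that $\prox_f$ is well-defined and phase-preserving only where $\lvert y_{\text{DR},i}\rvert>0$, so one should check a fixed point cannot have a vanishing $\bm{y}$-component (if $y_{\text{DR},i}=0$ then the convention $\prox_f$ sets that coordinate to $0$, contradicting $\lvert\cdot\rvert=b_i>0$ once the graph constraint is imposed); and second, that the reduction to the magnitude-retrieval problem in Assumption~\ref{assu:1}(3) is legitimate — the $\pm$ sign there is exactly the freedom coming from $\theta_i$ possibly being negative (so $b_i+\theta_i\ge0$ may force the ``$-$'' branch of the phase), and the conclusion $c\bm{x}^{\natural}$ with $c\in\mathbb{R}\setminus\{0\}$ combined with $\lvert\bm{a}_i^*\bm{x}_{\text{DR}}\rvert=b_i$ pins down $\lvert c\rvert=1$ and then $\bm{x}_{\text{DR}}=\alpha\bm{x}^{\natural}$. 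The remaining computations — verifying $\bm{M}=\Pi_{\bm{A}^*}$, the orthogonal decomposition $\mathbb{C}^{n+m}=C\oplus C^{\perp}$, and the equivalence ``$\bm{z}_{\text{DR}}$ fixed $\iff$ $\prox_\phi(\bm{z}_{\text{DR}})\in C$ and $\bm{z}_{\text{DR}}-\prox_\phi(\bm{z}_{\text{DR}})\in C^{\perp}$'' — are routine properties of DR splitting over a subspace and should be stated with brief justification.
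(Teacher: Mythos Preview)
Your approach is essentially the paper's: both reduce the fixed-point condition to $\prox_\phi(\bm{z}_{\text{DR}})\in C$ together with $\bm{z}_{\text{DR}}-\prox_\phi(\bm{z}_{\text{DR}})\in C^{\perp}$, which is exactly what the paper extracts (more tersely) from $\bm{M}^2=\bm{M}$ in the form $\bm{M}\bm{z}_{\text{DR}}^{\infty}=\prox_\phi(\bm{z}_{\text{DR}}^{\infty})$, and both then read off $\lvert\bm{A}^*\bm{x}_{\text{DR}}\rvert=\bm{b}$ and invoke uniqueness. Your componentwise parametrization of the $\bm{y}$-block and the verification of the easy inclusion $\mathcal{F}\subseteq\mathrm{Fix}$ are correct and simply make explicit what the paper leaves implicit.

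One point to clean up: you do not need Assumption~\ref{assu:1}(3) here, and your discussion of it is muddled. From $\bm{p}\in C$ you already have $\bm{A}^*\bm{x}_{\text{DR}}=\bm{b}\circ\bm{v}$, hence $\lvert\bm{a}_i^*\bm{x}_{\text{DR}}\rvert=b_i$ for all $i$, and Assumption~\ref{assu:1}(2) alone forces $\bm{x}_{\text{DR}}=\alpha\bm{x}^{\natural}$; this is precisely what the paper uses (``by \ldots the uniqueness of problem~\eqref{eq:prob}''). Your claim that the $\pm$ sign in part~(3) corresponds to ``$\theta_i$ possibly being negative'' is incorrect: as long as $b_i+\theta_i>0$ the phase $v_i$ of $y_{\text{DR},i}$ is unchanged regardless of the sign of $\theta_i$, and in any case magnitude retrieval (part~(3)) is a statement about matching \emph{phases}, not moduli, so it is not the right tool for deducing $\bm{x}_{\text{DR}}=\alpha\bm{x}^{\natural}$ from $\lvert\bm{A}^*\bm{x}_{\text{DR}}\rvert=\bm{b}$. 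Drop the appeal to part~(3) and the argument is clean and matches the paper's.
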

\begin{proof}
 Obviously, every element in $ \mathcal{F}$ is a fixed point.
 
 On the other hand, the fixed point $\bm{z}_{\text{DR}}^{\infty}=
  \begin{pmatrix}
    \bm{x}_{\text{DR}}^{\infty}\\
    \bm{y}_{\text{DR}}^{\infty}
  \end{pmatrix}
$ for iteration~\eqref{eq:iter} satisfies
  \begin{equation*}
    \bm{M}\left(2\prox_{\phi}\begin{pmatrix}
    \bm{x}_{\text{DR}}^{\infty}\\\bm{y}_{\text{DR}}^{\infty}
  \end{pmatrix}-\begin{pmatrix}
    \bm{x}_{\text{DR}}^{\infty}\\\bm{y}_{\text{DR}}^{\infty}
  \end{pmatrix}\right)=\prox_{\phi}\begin{pmatrix}
    \bm{x}_{\text{DR}}^{\infty}\\\bm{y}_{\text{DR}}^{\infty}
  \end{pmatrix}.
\end{equation*}
By $\bm{M}^2=\bm{M}$, we have
\begin{equation*}
  \bm{M}\left(\prox_{\phi}\begin{pmatrix}
    \bm{x}_{\text{DR}}^{\infty}\\\bm{y}_{\text{DR}}^{\infty}
  \end{pmatrix}\right)=\bm{M}\begin{pmatrix}
    \bm{x}_{\text{DR}}^{\infty}\\\bm{y}_{\text{DR}}^{\infty}
  \end{pmatrix}=\prox_{\phi}\begin{pmatrix}
    \bm{x}_{\text{DR}}^{\infty}\\\bm{y}_{\text{DR}}^{\infty}
  \end{pmatrix}.
\end{equation*}
By the second equality and the uniqueness of problem~\eqref{eq:prob}, we have $\begin{pmatrix}
    \bm{x}_{\text{DR}}^{\infty}\\\bm{y}_{\text{DR}}^{\infty}
  \end{pmatrix} \in \mathcal{F}$.
\end{proof}

Given a point $\bm{z}_{\text{DR}}^k$, we define its projection onto the fixed point set $\mathcal{F}$
\begin{equation*}
  P_{\mathcal{F}}\bm{z}_{\text{DR}}^k= \argmin_{\bm{z}\in\mathcal{F}}\norm{\bm{z}-\bm{z}_{\text{DR}}^k}_2^2.
\end{equation*}
Denote $\bm{z}_*^k:=P_{\mathcal{F}}\bm{z}_{\text{DR}}^k =
\begin{pmatrix}
  \alpha_k\bm{x}^{\natural}\\\alpha_k\bm{y}_*^k
\end{pmatrix}=\begin{pmatrix}
  \alpha_k\bm{x}^{\natural}\\\alpha_k(\bm{b}+\bm{\theta}_k)\circ\frac{\bm{y}^{\natural}}{\lvert\bm{y}^{\natural}\rvert}
\end{pmatrix}
$, where $\bm{\theta}_k\in\mathcal{C}=\left\{\bm{\theta}\in\mathbb{R}^m\Bigm |\bm{A}(\bm{\theta}\circ\frac{\bm{y}^{\natural}}{\lvert\bm{y}^{\natural}\rvert})=0 \text{ and } \bm{b}+\bm{\theta}\geq 0\right\}$.
When we study the local convergence around solution $\bm{z}^{\natural}=\begin{pmatrix}
  \bm{x}^{\natural}\\\bm{y}^{\natural}
\end{pmatrix}$, for each iteration number $k$, we will consider an open ball $\mathcal{V}_{k}\in\mathbb{C}^{n+m}$ of radius $b_{\min}/2$ centered at $\alpha_k\bm{z}^{\natural}$, where $\alpha_k$ is determined by $P_{\mathcal{F}}\bm{z}_{\text{DR}}^k$. 

\begin{lem}
  \label{lem:orth2}
  If $b_{\min}>0$ and $\bm{z}_{\text{DR}}^k\in \mathcal{V}_{k}$, then we have that
  \begin{equation*}
    \left\langle \frac{\overline{\alpha_k\bm{y}_*^k}}{\lvert \bm{y}_*^k\rvert}\circ (\bm{y}_{\text{DR}}^k-\alpha_k\bm{y}_*^k), \bm{\theta}\right\rangle=0, \forall \bm{\theta}\in \mathcal{C}',
  \end{equation*}
  where $\mathcal{C}'=\left\{\bm{\theta}\in\mathbb{R}^m\Bigm |\bm{A}(\bm{\theta}\circ\frac{\bm{y}^{\natural}}{\lvert\bm{y}^{\natural}\rvert})=0\right\}$. Therefore $\re\left(\frac{\overline{\alpha_k\bm{y}_*^k}}{\lvert \bm{y}_*^k\rvert}\circ (\bm{y}_{\text{DR}}^k-\alpha_k\bm{y}_*^k)\right)\perp \bm{\theta}$ and $\im\left(\frac{\overline{\alpha_k\bm{y}_*^k}}{\lvert \bm{y}_*^k\rvert}\circ (\bm{y}_{\text{DR}}^k-\alpha_k\bm{y}_*^k)\right)\perp \bm{\theta}$.
\end{lem}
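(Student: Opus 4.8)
The plan is to obtain Lemma~\ref{lem:orth2} as the first‑order stationarity condition of the metric projection $\bm z_*^k=P_{\mathcal F}\bm z_{\text{DR}}^k$, once the inequality constraints hidden in $\mathcal F$ have been shown to be inactive near the projection. First the localization. Since $\alpha_k\bm z^{\natural}\in\mathcal F$ (take $\bm\theta=\bm 0$ in the definition of $\mathcal F$, using $\bm b>\bm 0$) and $\bm z_*^k$ is the $\mathcal F$‑projection of $\bm z_{\text{DR}}^k$, we get $\|\bm z_*^k-\bm z_{\text{DR}}^k\|\le\|\alpha_k\bm z^{\natural}-\bm z_{\text{DR}}^k\|<b_{\min}/2$, hence $\|\bm z_*^k-\alpha_k\bm z^{\natural}\|<b_{\min}$; comparing $\bm y$‑blocks and using $|(\bm y^{\natural})_i|=b_i$ gives $\|\bm\theta_k\|=\|\bm y_*^k-\bm y^{\natural}\|=\|\bm z_*^k-\alpha_k\bm z^{\natural}\|<b_{\min}$. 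Thus $(\theta_k)_i>-b_{\min}\ge -b_i$ for every $i$, i.e. $\bm b+\bm\theta_k>\bm 0$; in particular $|(\bm y_*^k)_i|=b_i+(\theta_k)_i>0$, so $\tfrac{\overline{\alpha_k\bm y_*^k}}{|\bm y_*^k|}$ is well defined and equals $\overline{\alpha_k}\,\overline{\bm w}$ entrywise, where $\bm w:=\bm y^{\natural}/|\bm y^{\natural}|$. Consequently, near $\bm z_*^k$ the set $\mathcal F$ is the smooth image of $(\alpha,\bm\theta)\mapsto(\alpha\bm x^{\natural},\alpha(\bm b+\bm\theta)\circ\bm w)$ with $|\alpha|=1$ and $\bm\theta$ ranging \emph{freely} over the linear space $\mathcal C'$; its tangent space at $\bm z_*^k$ is spanned (over $\R$) by $i\bm z_*^k$ and the vectors $(\bm 0,\alpha_k\bm\eta\circ\bm w)$, $\bm\eta\in\mathcal C'$.

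Second, the real part. Since $\bm z_*^k=P_{\mathcal F}\bm z_{\text{DR}}^k$, the residual $\bm z_{\text{DR}}^k-\bm z_*^k$ is real‑orthogonal to every tangent vector of $\mathcal F$ at $\bm z_*^k$; applied to the two‑sided direction $(\bm 0,\alpha_k\bm\eta\circ\bm w)$ this yields $\re\langle\bm y_{\text{DR}}^k-\alpha_k\bm y_*^k,\alpha_k\bm\eta\circ\bm w\rangle=0$ for all $\bm\eta\in\mathcal C'$. Using $\tfrac{\overline{\alpha_k(\bm y_*^k)_i}}{|(\bm y_*^k)_i|}=\overline{\alpha_k w_i}$, one checks this is exactly $\re\langle\bm u,\bm\eta\rangle=0$ with $\bm u:=\tfrac{\overline{\alpha_k\bm y_*^k}}{|\bm y_*^k|}\circ(\bm y_{\text{DR}}^k-\alpha_k\bm y_*^k)$, i.e. $\re(\bm u)\perp\bm\eta$.

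Third, the imaginary part $\im(\bm u)\perp\bm\eta$ — equivalently $\langle\bm u,\bm\eta\rangle=0$ — which is the crux. From $\overline{\alpha_k\bm w}\circ(\alpha_k\bm y_*^k)=\bm b+\bm\theta_k\in\R^m$ we get $\bm u=\overline{\alpha_k\bm w}\circ\bm y_{\text{DR}}^k-(\bm b+\bm\theta_k)$, so $\langle\bm u,\bm\eta\rangle=\alpha_k\langle\bm y_{\text{DR}}^k,\bm\eta\circ\bm w\rangle-\langle\bm b+\bm\theta_k,\bm\eta\rangle$, and the last term is real; thus it suffices that $\alpha_k\langle\bm y_{\text{DR}}^k,\bm\eta\circ\bm w\rangle\in\R$. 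Here the structure of the iterate enters: from~\eqref{eq:drp} with $\prox_g=\mathrm{id}$ and the explicit $\prox_f$, together with~\eqref{eq:iter}, one finds $\bm x_{\text{DR}}^k=\bm x^k$ and $\bm y_{\text{DR}}^k-\bm A^{*}\bm x_{\text{DR}}^k=\bm y_{\text{DR}}^{k-1}-\bm b\circ\tfrac{\bm y_{\text{DR}}^{k-1}}{|\bm y_{\text{DR}}^{k-1}|}=:\bm g^k$, a vector whose $i$‑th entry is a real multiple of the phase of $(\bm y_{\text{DR}}^{k-1})_i$. For $\bm\eta\in\mathcal C'$ we have $\langle\bm A^{*}\bm x_{\text{DR}}^k,\bm\eta\circ\bm w\rangle=\langle\bm x_{\text{DR}}^k,\bm A(\bm\eta\circ\bm w)\rangle=0$ by the very definition of $\mathcal C'$, so the claim reduces to $\alpha_k\langle\bm g^k,\bm\eta\circ\bm w\rangle\in\R$; and once $\bm g^k=\bm\rho\circ(\alpha_k\bm w)$ with $\bm\rho\in\R^m$ one gets $\alpha_k\langle\bm g^k,\bm\eta\circ\bm w\rangle=|\alpha_k|^2\langle\bm\rho,\bm\eta\rangle=\langle\bm\rho,\bm\eta\rangle\in\R$, finishing the argument.

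The hard part is precisely this last reduction: the recursion only shows $\bm g^k$ is phase‑aligned entrywise with the \emph{previous} iterate $\bm y_{\text{DR}}^{k-1}$, whereas we need alignment with $\alpha_k\bm w$. Closing this requires an extra ingredient — a companion structural lemma on the iterates, or an inductive hypothesis carried through the local‑convergence analysis guaranteeing that inside the balls $\mathcal V_k$ the relevant phases stay aligned with $\alpha_k\bm w$, so that indeed $\bm g^k=\bm\rho\circ(\alpha_k\bm w)$ for real $\bm\rho$. I expect most of the effort to go into making that phase‑alignment rigorous.
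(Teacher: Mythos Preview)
Your first two steps --- the localization showing the inequality constraints of $\mathcal F$ are strictly inactive at $\bm\theta_k$, and the first-order optimality of the projection giving $\re(\bm u)\perp\mathcal C'$ --- are precisely the paper's argument. The paper packages the first step by introducing the affine set $\mathcal F'$ obtained from $\mathcal F$ by fixing $\alpha=\alpha_k$ and dropping $\bm b+\bm\theta\ge\bm 0$, then shows $P_{\mathcal F'}\bm z_{\text{DR}}^k=P_{\mathcal F}\bm z_{\text{DR}}^k$ using $\bm z_{\text{DR}}^k\in\mathcal V_k$, and finally writes down the optimality condition for $\bm\theta_k=\argmin_{\bm\theta\in\mathcal C'}\|\bm y_{\text{DR}}^k-\alpha_k(\bm b+\bm\theta)\circ\bm w\|^2$.

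Where the paper differs from you is on the imaginary part: it gives \emph{no} separate argument. It simply records the full complex identity $\langle\bm u,\bm\theta\rangle=0$ as ``the optimality condition''. But stationarity of a real-valued quadratic over the \emph{real} subspace $\mathcal C'$ only forces $\re(\bm u)\perp\mathcal C'$, exactly as you say; so the paper's proof is terse (arguably incomplete) on the very point you flag.

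Your attempt to recover the imaginary part from the DR recursion is not in the paper, and the obstacle you identify is genuine: the recursion aligns $\bm y_{\text{DR}}^k-\bm A^{*}\bm x_{\text{DR}}^k$ entrywise with the phases of $\bm y_{\text{DR}}^{k-1}$, not with $\alpha_k\bm w$. More fundamentally, the lemma's only hypothesis is $\bm z_{\text{DR}}^k\in\mathcal V_k$, which carries no information about the previous iterate, so no argument invoking $\bm y_{\text{DR}}^{k-1}$ can establish the lemma as stated. I would not pursue the inductive phase-alignment lemma you sketch. If you care about the downstream uses, note that the bound for $\bm f=\re(\bm u)$ needs only the real-part orthogonality; the imaginary-part claim is used only later, and there paired against the single vector $\bm\theta_k$ rather than all of $\mathcal C'$, so that step is better argued directly from the $\alpha$-optimality of $P_{\mathcal F}$ than through this lemma.
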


\begin{proof}
 We consider set $\mathcal{F}'$ with the definition
\begin{equation*}
  \mathcal{F}' = \left\{
    \begin{pmatrix}
      \alpha_k\bm{x}^{\natural}\\
      \alpha_k(\bm{b}+\bm{\theta})\circ\frac{\bm{y}^{\natural}}{\lvert\bm{y}^{\natural}\rvert}
    \end{pmatrix}\Bigm| \bm{\theta}\in\mathbb{R}^m, \bm{A}(\bm{\theta}\circ\frac{\bm{y}^{\natural}}{\lvert\bm{y}^{\natural}\rvert})=0
\right\}.
\end{equation*}

From $\bm{z}_{\text{DR}}^k\in \mathcal{V}_k$, i.e., $
  \norm{\bm{z}_{\text{DR}}^k-\alpha_k\bm{z}^{\natural}}\leq \frac{b_{\min}}{2}$,
by the projection property, it implies that 
\begin{equation*}
  \norm{\bm{z}_{\text{DR}}^k-P_{\mathcal{F}'}\bm{z}_{\text{DR}}^k}\leq \norm{\bm{z}_{\text{DR}}^k-\alpha_k\bm{z}^{\natural}}\leq \frac{b_{\min}}{2}.
\end{equation*}
Hence $\norm{P_{\mathcal{F}'}\bm{z}_{\text{DR}}^k-\alpha_k\bm{z}^{\natural}}\leq b_{\min}$. 
Denote
\begin{equation*}
  P_{\mathcal{F}'}\bm{z}_{\text{DR}}^k = \begin{pmatrix}
  \alpha_k\bm{x}^{\natural}\\\alpha_k(\bm{b}+\bm{\theta}_k)\circ\frac{\bm{y}^{\natural}}{\lvert\bm{y}^{\natural}\rvert}
\end{pmatrix},
\end{equation*}
we have $\bm{b}+\bm{\theta}_k\geq 0$, which implies $P_{\mathcal{F}'}\bm{z}_{\text{DR}}^k=P_{\mathcal{F}}\bm{z}_{\text{DR}}^k$.

By the definition of projection $P_{\mathcal{F}'}$,
  \begin{equation*}
    \bm{\theta}_k = \argmin_{\bm{\theta}\in\mathcal{C}'}\norm{\bm{y}_{\text{DR}}^k-\alpha_k(\bm{b}+\bm{\theta})\frac{\bm{y}^{\natural}}{\lvert \bm{y}^{\natural}\rvert}}_2^2,
  \end{equation*}
one can derive the optimality condition
\begin{equation*}
  \left\langle \frac{\overline{\alpha_k\bm{y}_*^k}}{\lvert \bm{y}_*^k\rvert}\circ (\bm{y}_{\text{DR}}^k-\alpha_k\bm{y}_*^k), \bm{\theta}\right\rangle=0.
\end{equation*}
\end{proof}

\subsection{Local Convergence of GPS}
\label{subsec:GPS}
At the $k$-th step, given $\bm{z}_{\text{DR}}^k$, using the fixed point $P_{\mathcal{F}}\bm{z}_{\text{DR}}^k=\begin{pmatrix}
  \alpha_k\bm{x}^{\natural}\\\alpha_k\bm{y}_*^k
\end{pmatrix}$, we have the recursive relation
\begin{equation}
  \label{eq:conc}
\begin{aligned}
  \begin{pmatrix}
    \bm{x}_{\text{DR}}^{k+1}-\alpha_k\bm{x}^{\natural}\\\bm{y}_{\text{DR}}^{k+1}-\alpha_k\bm{y}_*^k
  \end{pmatrix}
  &=\begin{pmatrix}
    \bm{x}_{\text{DR}}^{k}-\alpha_k\bm{x}^{\natural}\\\bm{y}_{\text{DR}}^{k}-\alpha_k\bm{y}_*^k
  \end{pmatrix}+\bm{C}^*\bm{C}\left(2\prox_{\phi}\begin{pmatrix}
    \bm{x}_{\text{DR}}^{k}\\\bm{y}_{\text{DR}}^{k}
  \end{pmatrix}-2\prox_{\phi}\begin{pmatrix}
    \alpha_k\bm{x}^{{\natural}}\\\alpha_k\bm{y}_*^k
  \end{pmatrix}-\begin{pmatrix}
    \bm{x}_{\text{DR}}^{k}-\alpha_k\bm{x}^{\natural}\\\bm{y}_{\text{DR}}^{k}-\alpha_k\bm{y}_*^k
  \end{pmatrix}\right)\\
  &{\phantom{==}}-\left(\prox_{\phi}
  \begin{pmatrix}
    \bm{x}_{\text{DR}}^{k}\\\bm{y}_{\text{DR}}^{k}
  \end{pmatrix}-\prox_{\phi}
  \begin{pmatrix}
    \alpha_k\bm{x}^{\natural}\\\alpha_k\bm{y}_*^k
  \end{pmatrix}
\right).
\end{aligned}
\end{equation}

Note that $\prox_g(\bm{x}_{\text{DR}}^k)=\bm{x}_{\text{DR}}^k$. At the $k$-th step, let $\bm{B}=\bm{C}\bm{\Omega}$,
where $\bm{\Omega}=\begin{pmatrix}
    \bm{\Omega}_x & \\ & \bm{\Omega}_y
  \end{pmatrix}$, and $\bm{\Omega}_x=\diag\left(\frac{\alpha_k\bm{x}^{\natural}}{\lvert \bm{x}^{\natural}\rvert}\right),\bm{\Omega}_y=\diag\left(\frac{\alpha_k\bm{y}_*^k}{\lvert \bm{y}_*^k\rvert}\right)$. Hence the matrix $\bm{B}$ depends on the iteration number $k$, but we do not explicitly show the dependence to simplify the notation. The proximity term in~\eqref{eq:conc} is nonlinear which has the following directional derivative\footnote{Here we exploit the $\mathbb{C}-\mathbb{R}$ calculus of real-valued function in complex-valued variables, please refer to~\cite{Kreutz-Delgado2009}.} for $\bm{z}\in\mathbb{C}^n$ and $\bm{h}\in\mathbb{C}^n$, 
\begin{equation}
  \label{eq:phasediff}
  D\left(\frac{\bm{z}}{\lvert \bm{z}\rvert}\right)(\bm{h}) = \frac{\bm{h}}{\lvert \bm{z}\rvert}-\frac{\bm{z}\re(\overline{\bm{z}}\bm{h})}{\lvert \bm{z}\rvert^3}=\frac{i\bm{z}\im(\overline{\bm{z}}\bm{h})}{\lvert \bm{z}\rvert^3}.
\end{equation}
Hence, the linear approximation to~\eqref{eq:conc} around point $
\begin{pmatrix}
  \alpha_k\bm{x}^{\natural}\\\alpha_k\bm{y}_*^k
\end{pmatrix}
$ reads
\begin{equation*}
   \begin{pmatrix}
    \bm{x}_{\text{DR}}^{k+1}-\alpha_k\bm{x}^{\natural}\\\bm{y}_{\text{DR}}^{k+1}-\alpha_k\bm{y}_*^k
  \end{pmatrix}=\bm{\Omega}(\bm{I}-\bm{B}^*\bm{B})\begin{pmatrix}
    \bm{\Omega}_x^*(\bm{x}_{\text{DR}}^{k}-\alpha_k\bm{x}^{\natural})\\\bm{\Omega}_y^*(\bm{y}_{\text{DR}}^{k}-\alpha_k\bm{y}_*^k)
  \end{pmatrix}+\bm{\Omega}(2\bm{B}^*\bm{B}-\bm{I})
  \begin{pmatrix}
    \bm{\Omega}_x^*(\bm{x}_{\text{DR}}^{k}-\alpha_k\bm{x}^{\natural})\\ \diag\left(\frac{\bm{b}}{\lvert \bm{y}_*^k\rvert}\right)i\im\left(\bm{\Omega}_y^*(\bm{y}_{\text{DR}}^{k}-\alpha_k\bm{y}_*^k)\right)
  \end{pmatrix}+\text{h.o.t.}
\end{equation*}
Denote $\bm{w}^{k+1}=\begin{pmatrix}
  \bm{\Omega}_x^*(\bm{x}_{\text{DR}}^{k+1}-\alpha_k\bm{x}^{\natural})\\ \bm{\Omega}_y^*(\bm{y}_{\text{DR}}^{k+1}-\alpha_k\bm{y}_*^k)
\end{pmatrix}$ and $\bm{v}^{k}=
\begin{pmatrix}
  \bm{\Omega}_x^*(\bm{x}_{\text{DR}}^{k}-\alpha_k\bm{x}^{\natural})\\ \bm{\Omega}_y^*(\bm{y}_{\text{DR}}^{k}-\alpha_k\bm{y}_*^k)
\end{pmatrix}
$, then the iteration scheme becomes
\begin{align*}
\bm{w}^{k+1} &=(\bm{I}-\bm{B}^*\bm{B})\begin{pmatrix}
  \bm{\Omega}_x^*(\bm{x}_{\text{DR}}^{k}-\alpha_k\bm{x}^{\natural})\\ \bm{\Omega}_y^*(\bm{y}_{\text{DR}}^{k}-\alpha_k\bm{y}_*^k)
\end{pmatrix}
+(2\bm{B}^*\bm{B}-\bm{I})
  \begin{pmatrix}
    \bm{\Omega}_x^*(\bm{x}_{\text{DR}}^k-\alpha_k\bm{x}^{\natural})\\ \diag\left(\frac{\bm{b}}{\bm{b}+\bm{\theta}_k}\right)i\im\left(\bm{\Omega}_y^*(\bm{y}_{\text{DR}}^{k}-\alpha_k\bm{y}_*^k)\right)
  \end{pmatrix}+\text{h.o.t.}\\
&:=J_k(\bm{v}^k)+\text{h.o.t.}.
\end{align*}
Let
$  J_0(\bm{v}^k) = (\bm{I}-\bm{B}^*\bm{B})\begin{pmatrix}
  \bm{\Omega}_x^*(\bm{x}_{\text{DR}}^{k}-\alpha_k\bm{x}^{\natural})\\ \bm{\Omega}_y^*(\bm{y}_{\text{DR}}^{k}-\alpha_k\bm{y}_*^k)
\end{pmatrix}+(2\bm{B}^*\bm{B}-\bm{I})
  \begin{pmatrix}
    \bm{\Omega}_x^*(\bm{x}_{\text{DR}}^k-\alpha_k\bm{x}^{\natural})\\ i\im\left(\bm{\Omega}_y^*(\bm{y}_{\text{DR}}^{k}-\alpha_k\bm{y}_*^k)\right)
  \end{pmatrix},
  $
we infer from the continuity of the operator $J_k$ around $J_0$ that for any $\bm{\theta}\in\mathcal{C}$ such that $\norm{\bm{\theta}}\leq \epsilon_2$,
\begin{equation*}
  \norm{J_k-J_0}\leq \epsilon_1.
\end{equation*}
Given point $\bm{z}_{\text{DR}}^1$ and denote $\bm{z}^{\natural}=
\begin{pmatrix}
  \bm{x}^{\natural}\\\bm{y}^{\natural}
\end{pmatrix}
$, we assume $\norm{\bm{z}_{\text{DR}}^1-\bm{z}^{\natural}}<\epsilon_3<\frac{\epsilon_2}{2}<\frac{b_{\min}}{6}$, then $\bm{z}_*^1=P_{\mathcal{F}}\bm{z}_{\text{DR}}^1=(\alpha_1\bm{x}^{\natural},\alpha_1\bm{y}_*^1)$ satisfies $\norm{\alpha_1\bm{y}_*^1-\bm{y}^{\natural}}<2\norm{\bm{z}_{\text{DR}}^1-\bm{z}^{\natural}}<\epsilon_2<\frac{b_{\min}}{3}$,  and hence $\norm{\lvert\alpha_1\bm{y}_*^1\rvert-\lvert\bm{y}^{\natural}\rvert}<\epsilon_2$, a.k.a. $\norm{\bm{\theta}_1}<\epsilon_2$. The quantity $\epsilon_1,\epsilon_2$ and $\epsilon_3$ will be determined accordingly. Starting from this point, it can be shown that $\norm{\alpha_k\bm{y}_*^k-\bm{y}^{\natural}}\leq \epsilon_2$ for all $k\geq 1$. Therefore
\begin{equation*}
  \norm{\bm{w}^{k+1}}\leq \norm{J_k(\bm{v}^k)}+o(\norm{\bm{v}^k})\leq \norm{J_0(\bm{v}^k)}+(\epsilon_1+o(1))\norm{\bm{v}^k}.
\end{equation*}

Denote $\bm{v}^k=\re(\bm{v}^k)+i\im(\bm{v}^k)=
\begin{pmatrix}
  \bm{e}\\\bm{f}
\end{pmatrix}+i
\begin{pmatrix}
  \bm{g}\\\bm{h}
\end{pmatrix}
$, where $\bm{e},\bm{g}\in\mathbb{R}^{n}$ and $\bm{f},\bm{h}\in\mathbb{R}^{m}$, we have
\begin{align*}
  J_0(\bm{v}^k)&=(\bm{I}-\bm{B}^*\bm{B})\re(\bm{v}^k)+i\bm{B}^*\bm{B}\im(\bm{v}^k) +(2\bm{B}^*\bm{B}-\bm{I})\begin{pmatrix}\re\left(\bm{\Omega}_x^*(\bm{x}_{\text{DR}}^k-\alpha_k\bm{x}^{\natural})\right)\\\bm{0}\end{pmatrix}\\
  &=\bm{B}^*\bm{B}\begin{pmatrix}
  \bm{e}\\\bm{0}
\end{pmatrix}+(\bm{I}-\bm{B}^*\bm{B})\begin{pmatrix}
  \bm{0}\\\bm{f}
\end{pmatrix}+i\bm{B}^*\bm{B}\begin{pmatrix}
  \bm{g}\\\bm{h}
\end{pmatrix}\\
  & = \bm{B}^*\bm{B}\begin{pmatrix}
  \bm{e}+i\bm{g}\\i\bm{h}
\end{pmatrix}+(\bm{I}-\bm{B}^*\bm{B})\begin{pmatrix}
  \bm{0}\\\bm{f}
\end{pmatrix}.
\end{align*}
Since $\bm{B}^*$ is isometric, $\bm{B}^*\bm{B}$ is a projection. It is easy to see that
\begin{equation*}
  \norm{J_0(\bm{v}^k)}^2 = \norm{\bm{B}^*\bm{B}\begin{pmatrix}
  \bm{e}+i\bm{g}\\i\bm{h}
\end{pmatrix}}^2+\norm{(\bm{I}-\bm{B}^*\bm{B})\begin{pmatrix}
  \bm{0}\\\bm{f}
\end{pmatrix}}^2\leq \norm{\bm{e}}^2+\norm{\bm{g}}^2+\norm{\bm{h}}^2+\norm{\bm{f}}^2=\norm{\bm{v}^k}^2.
\end{equation*} 

One can further bound  $\norm{J_0(\bm{v}^k)}$ from the following facts. 
\begin{lem}
  Let $\bm{e}=\re(\bm{\Omega}_x^*(\bm{x}_{\text{DR}}^{k}-\alpha_k\bm{x}^{\natural}))$, $\bm{f}=\re(\bm{\Omega}_y^*(\bm{y}_{\text{DR}}^{k}-\alpha_k\bm{y}_*^k))$ and provided $\bm{z}_{\text{DR}}^k\in\mathcal{V}_k$, there exist two constants $0<\delta_1,\delta_2<1$, such that 
  \begin{equation*}
   \norm{\bm{B}^*\bm{B}
  \begin{pmatrix}
    \bm{e}\\\bm{0}
  \end{pmatrix}}\leq \frac{1}{\sqrt{1+(s_{\min}(\bm{A}))^2}}\norm{\bm{e}}:=\delta_1\norm{\bm{e}},\qquad
\norm{\bm{B}^*\bm{B}\begin{pmatrix}
    \bm{0}\\\bm{f}
  \end{pmatrix}}\geq \frac{s_{\min}(G(\bm{A}\bm{\Omega}_y))}{1+(s_{\max}(\bm{A}))^2}\norm{\bm{f}}:=\delta_2\norm{\bm{f}},
\end{equation*}
where $s_{\max}(\bm{A})$ and $s_{\min}(\bm{A})$ are the largest and smallest non-zero singular values of matrix $\bm{A}$ respectively and $G(\bm{A})=
\begin{pmatrix}
  \re(\bm{A})\\\im(\bm{A})
\end{pmatrix}
$.
\end{lem}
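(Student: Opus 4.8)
The plan is to reduce both inequalities to elementary singular-value estimates for the Cholesky factor $\bm{L}$, using the block form $\bm{B}=\bm{C}\bm{\Omega}=\begin{pmatrix}\bm{L}^{-1}\bm{\Omega}_x & \bm{L}^{-1}\bm{A}\bm{\Omega}_y\end{pmatrix}$. First recall $\bm{B}\bm{B}^*=\bm{C}\bm{\Omega}\bm{\Omega}^*\bm{C}^*=\bm{C}\bm{C}^*=\bm{I}$, since $\bm{\Omega}$ is unitary and $\bm{C}\bm{C}^*=\bm{I}$ was already verified; hence $\bm{B}^*$ is an isometry, so $\norm{\bm{B}^*\bm{B}\bm{u}}=\norm{\bm{B}^*(\bm{B}\bm{u})}=\norm{\bm{B}\bm{u}}$ for every $\bm{u}$. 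Since $\bm{B}\begin{pmatrix}\bm{e}\\\bm{0}\end{pmatrix}=\bm{L}^{-1}\bm{\Omega}_x\bm{e}$ and $\bm{B}\begin{pmatrix}\bm{0}\\\bm{f}\end{pmatrix}=\bm{L}^{-1}\bm{A}\bm{\Omega}_y\bm{f}$, it remains to bound $\norm{\bm{L}^{-1}\bm{\Omega}_x\bm{e}}$ from above and $\norm{\bm{L}^{-1}\bm{A}\bm{\Omega}_y\bm{f}}$ from below. I would use throughout that, $\bm{L}$ being square with $\bm{L}\bm{L}^*=\bm{I}+\bm{A}\bm{A}^*$, the singular values of $\bm{L}$ are exactly $\sqrt{1+\sigma^2}$ as $\sigma$ ranges over the singular values of $\bm{A}$ (all positive, by the full-column-rank hypothesis on $\bm{A}^*$); in particular $\norm{\bm{L}^{-1}}_{\mathrm{op}}=1/\sqrt{1+s_{\min}(\bm{A})^2}$ and $\norm{\bm{L}}_{\mathrm{op}}=\sqrt{1+s_{\max}(\bm{A})^2}$.

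For the first estimate, $\bm{\Omega}_x$ is diagonal with entries of modulus at most $1$, so $\norm{\bm{L}^{-1}\bm{\Omega}_x\bm{e}}\le\norm{\bm{L}^{-1}}_{\mathrm{op}}\norm{\bm{\Omega}_x\bm{e}}\le\norm{\bm{L}^{-1}}_{\mathrm{op}}\norm{\bm{e}}=\tfrac{1}{\sqrt{1+s_{\min}(\bm{A})^2}}\norm{\bm{e}}$, that is, the claimed inequality with $\delta_1\in(0,1)$.

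For the second estimate, I would first peel off $\bm{L}^{-1}$ via $\norm{\bm{L}^{-1}\bm{A}\bm{\Omega}_y\bm{f}}\ge\norm{\bm{A}\bm{\Omega}_y\bm{f}}/\norm{\bm{L}}_{\mathrm{op}}$, reducing matters to a lower bound on $\norm{\bm{A}\bm{\Omega}_y\bm{f}}$. As $\bm{f}$ is real, separating real and imaginary parts gives $\norm{\bm{A}\bm{\Omega}_y\bm{f}}=\norm{G(\bm{A}\bm{\Omega}_y)\bm{f}}$. The crucial point --- and this is where the hypothesis $\bm{z}_{\text{DR}}^k\in\mathcal{V}_k$ enters --- is that, by Lemma~\ref{lem:orth2}, $\bm{f}=\re\!\left(\bm{\Omega}_y^*(\bm{y}_{\text{DR}}^k-\alpha_k\bm{y}_*^k)\right)$ is orthogonal to $\mathcal{C}'$. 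Moreover, because $\bm{b}+\bm{\theta}_k\ge 0$ the vectors $\bm{y}_*^k$ and $\bm{y}^{\natural}$ have the same phase, so $\bm{A}\bm{\Omega}_y\bm{\theta}=\alpha_k\bm{A}\bigl(\bm{\theta}\circ\tfrac{\bm{y}^{\natural}}{\lvert\bm{y}^{\natural}\rvert}\bigr)$ for real $\bm{\theta}$, whence the real null space of $G(\bm{A}\bm{\Omega}_y)$ is exactly $\mathcal{C}'$. Consequently $\bm{f}$ lies in the row space of $G(\bm{A}\bm{\Omega}_y)$ and $\norm{G(\bm{A}\bm{\Omega}_y)\bm{f}}\ge s_{\min}(G(\bm{A}\bm{\Omega}_y))\norm{\bm{f}}$, where $s_{\min}$ is the smallest nonzero singular value (positive since $\bm{A}\neq 0$). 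Combining, $\norm{\bm{L}^{-1}\bm{A}\bm{\Omega}_y\bm{f}}\ge\tfrac{s_{\min}(G(\bm{A}\bm{\Omega}_y))}{\sqrt{1+s_{\max}(\bm{A})^2}}\norm{\bm{f}}\ge\tfrac{s_{\min}(G(\bm{A}\bm{\Omega}_y))}{1+s_{\max}(\bm{A})^2}\norm{\bm{f}}$, the last step using $\sqrt{1+s_{\max}(\bm{A})^2}\le 1+s_{\max}(\bm{A})^2$; this gives the stated bound with $\delta_2>0$, and $\delta_2<1$ follows from $\norm{G(\bm{A}\bm{\Omega}_y)\bm{f}}^2=\bm{f}^*\bm{\Omega}_y^*\bm{A}^*\bm{A}\bm{\Omega}_y\bm{f}\le s_{\max}(\bm{A})^2\norm{\bm{f}}^2$, which forces $s_{\min}(G(\bm{A}\bm{\Omega}_y))\le s_{\max}(\bm{A})<1+s_{\max}(\bm{A})^2$.

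I expect the only genuinely delicate point to be the identification of the real null space of $G(\bm{A}\bm{\Omega}_y)$ with $\mathcal{C}'$, together with the appeal to Lemma~\ref{lem:orth2} to place $\bm{f}$ in $(\mathcal{C}')^\perp$; this is exactly what makes $s_{\min}(G(\bm{A}\bm{\Omega}_y))$ a strictly positive constant, so the lower bound is not vacuous. Without the constraint on $\bm{f}$, the real subspace $\{\bm{\Omega}_y\bm{\theta}:\bm{\theta}\in\mathbb{R}^m\}$ may meet $\ker\bm{A}$ nontrivially and no such bound can hold. Everything else is routine operator-norm bookkeeping plus the eigenvalue identity for $\bm{I}+\bm{A}\bm{A}^*$.
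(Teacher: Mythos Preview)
Your proposal is correct and follows essentially the same route as the paper: both reduce $\norm{\bm{B}^*\bm{B}\,\cdot}$ to $\norm{\bm{B}\,\cdot}$ via the isometry $\bm{B}\bm{B}^*=\bm{I}$, handle the first inequality by $\norm{\bm{L}^{-1}}_{\mathrm{op}}=(1+s_{\min}(\bm{A})^2)^{-1/2}$, and for the second inequality invoke Lemma~\ref{lem:orth2} to place $\bm{f}$ in $(\mathcal{C}')^\perp$ so that the smallest \emph{nonzero} singular value of $G(\bm{A}\bm{\Omega}_y)$ governs the lower bound. The only cosmetic difference is that for the second estimate the paper keeps just the top block of $\bm{B}^*\bm{B}\begin{pmatrix}\bm{0}\\\bm{f}\end{pmatrix}$, namely $\bm{\Omega}_x^*(\bm{I}+\bm{A}\bm{A}^*)^{-1}\bm{A}\bm{\Omega}_y\bm{f}$, and bounds $(\bm{I}+\bm{A}\bm{A}^*)^{-1}$ below by $(1+s_{\max}(\bm{A})^2)^{-1}$ directly, whereas you bound $\norm{\bm{L}^{-1}\bm{A}\bm{\Omega}_y\bm{f}}$ (yielding the sharper denominator $\sqrt{1+s_{\max}(\bm{A})^2}$) and then relax; both arrive at the stated constant.
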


\begin{proof}
  By direct calculation and the isometry of $\bm{C}$, 
  \begin{equation*}
    \norm{\bm{B}^*\bm{B}
  \begin{pmatrix}
    \bm{e}\\\bm{0}
  \end{pmatrix}}=\norm{\bm{\Omega}^*\bm{C}^*\bm{C}\bm{\Omega}\begin{pmatrix}
    \bm{e}\\\bm{0}
  \end{pmatrix}}=\norm{\bm{C}\bm{\Omega}\begin{pmatrix}
    \bm{e}\\\bm{0}
  \end{pmatrix}} = \norm{\bm{L}^{-1}\bm{\Omega}_x\bm{e}},
\end{equation*}
where $\bm{L}$ satisfies $\bm{L}\bm{L}^*=\bm{I}+\bm{A}\bm{A}^*$. By the relation of the eigenvalues of $\bm{I}+\bm{A}\bm{A}^*$ and the singular values of $\bm{A}$, which has full rank, one has the first bound. For the second inequality, we write down its explicit expression
\begin{equation*}
   \bm{B}^*\bm{B}
  \begin{pmatrix}
    \bm{0}\\\bm{f}
  \end{pmatrix}=
  \begin{pmatrix}
    \bm{\Omega}_x^*(\bm{I}+\bm{A}\bm{A}^*)^{-1}\bm{A}\bm{\Omega}_y\bm{f}\\
    \bm{\Omega}_y^*\bm{A}^*(\bm{I}+\bm{A}\bm{A}^*)^{-1}\bm{A}\bm{\Omega}_y\bm{f}
  \end{pmatrix}.
\end{equation*}
In principle, when the vector $\bm{f}$ is in the null space of $\bm{A}\bm{\Omega}_y$, the lower bound of the norm can vanish. By the positive definiteness of matrix $(\bm{I}+\bm{A}\bm{A}^*)^{-1}$, so
\begin{equation*}
  \norm{\bm{\Omega}_x^*(\bm{I}+\bm{A}\bm{A}^*)^{-1}\bm{A}\bm{\Omega}_y\bm{f}}\geq \frac{1}{1+(s_{\max}(\bm{A}))^2}\norm{\bm{A}\bm{\Omega}_y\bm{f}}.
\end{equation*}

By Lemma~\ref{lem:orth2}, we have that $\bm{f}\perp \bm{\theta},\forall \bm{\theta}\in\mathcal{C}'$, so
\begin{equation*}
  \norm{\bm{A}\bm{\Omega}_y\bm{f}}=\norm{G(\bm{A}\bm{\Omega}_y)\bm{f}}\geq s_{\min}(G(\bm{A}\bm{\Omega}_y))\norm{\bm{f}},
\end{equation*}
where $s_{\min}(G(\bm{A}\bm{\Omega}_y))$ denotes the smallest nonzero singular value of $G(\bm{A}\bm{\Omega}_y)$. Note that $s_{\min}(G(\bm{A}\bm{\Omega}_y))$ is the same for different iteration number $k$. Therefore,
\begin{equation*}
  \norm{\bm{B}^*\bm{B}\begin{pmatrix}
    \bm{0}\\\bm{f}
  \end{pmatrix}}\geq \frac{s_{\min}(G(\bm{A}\bm{\Omega}_y))}{1+(s_{\max}(\bm{A}))^2}\norm{\bm{f}}:=\delta_2\norm{\bm{f}},
\end{equation*}
It is also obvious that $0<\delta_1,\delta_2<1$ if $\bm{A}$ is column full rank.
\end{proof}
As a consequence, we have
\begin{equation*}
  \norm{(\bm{I}-\bm{B}^*\bm{B})
    \begin{pmatrix}
      \bm{0}\\\bm{f}
    \end{pmatrix}
}^2= \norm{\bm{f}}^2-\norm{\bm{B}^*\bm{B}\begin{pmatrix}
      \bm{0}\\\bm{f}
    \end{pmatrix}}^2\leq (1-\delta_2^2)\norm{\bm{f}}^2.
\end{equation*}
To bound the first norm, we will prove the following fact in next section.
\begin{lem}
  \label{lem:part2}
  With the specific expression of $\bm{v}^k=
  \begin{pmatrix}
    \bm{e}\\\bm{f}
  \end{pmatrix}+i
  \begin{pmatrix}
    \bm{g}\\\bm{h}
  \end{pmatrix}
  $, we have
  \begin{equation*}
  \norm{\bm{B}^*\bm{B}\begin{pmatrix}
  \bm{e}+i\bm{g}\\i\bm{h}
\end{pmatrix}}^2 \leq (\sigma_2^2+(1-\sigma_2^2)\delta_1^2)\norm{\bm{e}}^2+\sigma_2^2(\norm{\bm{g}}^2+\norm{\bm{h}}^2).
\end{equation*}
for some $0<\sigma_2<1$.
\end{lem}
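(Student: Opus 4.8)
The plan is to treat $\bm{B}^*\bm{B}$ as an orthogonal projection and combine the first estimate of the preceding lemma with the magnitude‑retrieval uniqueness of Assumption~\ref{assu:1}(3). Write $\bm{P}:=\bm{B}^*\bm{B}=\bm{\Omega}^*\bm{C}^*\bm{C}\bm{\Omega}$. Since $\bm{C}\bm{C}^*=\bm{I}$ and $\bm{\Omega}$ is unitary, $\bm{P}=\bm{P}^*=\bm{P}^2$ is the orthogonal projection onto $\mathcal{G}:=\bm{\Omega}^*\range(\bm{C}^*)=\bm{\Omega}^*C$, which (writing $\bm{x}=\bm{\Omega}_x\bm{u}$ in $C=\{(\bm{x},\bm{A}^*\bm{x})\}$) is the graph $\{(\bm{u},\tilde{\bm{A}}^*\bm{u}):\bm{u}\in\mathbb{C}^n\}$ of $\tilde{\bm{A}}^*:=\bm{\Omega}_y^*\bm{A}^*\bm{\Omega}_x$; note $\tilde{\bm{A}}\tilde{\bm{A}}^*=\bm{\Omega}_x^*\bm{A}\bm{A}^*\bm{\Omega}_x$ does not depend on $\alpha_k$. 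Being $\mathbb{C}$-linear, $\bm{P}$ satisfies $\bm{P}(i\bm{u})=i\bm{P}\bm{u}$, so with the real vectors $\bm{a}:=(\bm{e},\bm{0})$ and $\bm{c}:=(\bm{g},\bm{h})$ one has
\[
  \norm{\bm{B}^*\bm{B}(\bm{e}+i\bm{g},i\bm{h})}^2=\norm{\bm{P}\bm{a}}^2+\norm{\bm{P}\bm{c}}^2+2\re\langle\bm{P}\bm{a},i\bm{P}\bm{c}\rangle ,
\]
and $\norm{\bm{P}\bm{a}}^2=\norm{\bm{B}^*\bm{B}(\bm{e},\bm{0})}^2\le\delta_1^2\norm{\bm{e}}^2$ by the preceding lemma.

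The heart of the argument is a strict contraction $\norm{\bm{P}\bm{c}}\le\sigma_2\norm{\bm{c}}$ with a $k$-independent $\sigma_2<1$. Here I would use that $\bm{c}=(\bm{g},\bm{h})$ is real and that the optimality conditions defining $P_{\mathcal{F}}\bm{z}_{\text{DR}}^k$ — Lemma~\ref{lem:orth2} together with orthogonality of $\bm{z}_{\text{DR}}^k-P_{\mathcal{F}}\bm{z}_{\text{DR}}^k$ to the global‑phase direction, which yields $\langle|\bm{x}^{\natural}|,\bm{g}\rangle+\langle\bm{b},\bm{h}\rangle=0$ — force $\bm{c}\perp\bm{\psi}$, where $\bm{\psi}:=(|\bm{x}^{\natural}|,\bm{b})$. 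Next I would show that $\bm{\psi}$ spans all real vectors of $\mathcal{G}$: if $(\bm{u},\bm{w})\in\mathcal{G}$ is real then $\bm{w}=\tilde{\bm{A}}^*\bm{u}$ with $\tilde{\bm{A}}^*\bm{u}$ real, and transporting Assumption~\ref{assu:1}(3) through the unitary diagonals $\bm{\Omega}_x,\bm{\Omega}_y$ (the reformulation ``$\tilde{\bm{A}}^*\bm{u}\in\mathbb{R}^m\Rightarrow\bm{u}\in\mathbb{R}\,|\bm{x}^{\natural}|$'') this forces $\bm{u}\in\mathbb{R}\,|\bm{x}^{\natural}|$ and hence $\bm{w}\in\mathbb{R}\bm{b}$. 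Thus $\mathcal{G}\cap\{\bm{\psi}\}^{\perp}$ contains no nonzero real vector, so the continuous map $\bm{c}\mapsto\norm{\bm{P}\bm{c}}$ stays strictly below $1$ on the compact set $\{\bm{c}\in\mathbb{R}^{n+m}:\bm{c}\perp\bm{\psi},\ \norm{\bm{c}}=1\}$; its maximum is the desired $\sigma_2$. (Equivalently, $\norm{\bm{P}\bm{c}}^2=\norm{\bm{c}}^2-\dist(\bm{c},\mathcal{G})^2$ and $\dist(\bm{c},\mathcal{G})^2\ge\lambda_{\min}\big((\bm{I}+\tilde{\bm{A}}^*\tilde{\bm{A}})^{-1}\big)\norm{\bm{h}-\tilde{\bm{A}}^*\bm{g}}^2$, whose right side vanishes only at $\bm{c}=0$ on that subspace.)

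For the final constant I would first peel off from $\bm{P}\bm{a}$ its component along $\bm{\psi}$: since $\bm{P}\bm{\psi}=\bm{\psi}$ and $\bm{P}\bm{c}\perp\bm{\psi}$, writing $\bm{a}^{\perp}:=\bm{a}-\Pi_{\bm{\psi}}\bm{a}$ (real, $\perp\bm{\psi}$, $\norm{\bm{a}^{\perp}}\le\norm{\bm{e}}$) gives $\bm{P}\bm{a}=\Pi_{\bm{\psi}}\bm{a}+\bm{P}\bm{a}^{\perp}$ with $\Pi_{\bm{\psi}}\bm{a}$ orthogonal to $\bm{P}\bm{a}^{\perp}+i\bm{P}\bm{c}$, $\norm{\Pi_{\bm{\psi}}\bm{a}}^2\le\frac{\norm{\,|\bm{x}^{\natural}|\,}^2}{\norm{\,|\bm{x}^{\natural}|\,}^2+\norm{\bm{b}}^2}\norm{\bm{e}}^2$, and $\norm{\bm{P}\bm{a}^{\perp}}\le\sigma_2\norm{\bm{a}^{\perp}}$. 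The remaining term $\norm{\bm{P}\bm{a}^{\perp}+i\bm{P}\bm{c}}^2=\norm{\bm{P}\bm{a}^{\perp}}^2+\norm{\bm{P}\bm{c}}^2+2\re\langle\bm{P}\bm{a}^{\perp},i\bm{P}\bm{c}\rangle$ is then handled by Cauchy–Schwarz and a weighted Young inequality on the cross term — distributing a $\delta_1$-weighted share into the $\norm{\bm{e}}^2$-term and the rest into $\norm{\bm{g}}^2+\norm{\bm{h}}^2$ — and, adjusting the value of $\sigma_2$ within $(0,1)$, this produces the convex‑combination coefficient $\sigma_2^2+(1-\sigma_2^2)\delta_1^2$ on $\norm{\bm{e}}^2$ and $\sigma_2^2$ on $\norm{\bm{g}}^2+\norm{\bm{h}}^2$.

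The main obstacle is the strict‑contraction step: it is exactly where Assumption~\ref{assu:1}(3) is indispensable — without it $\mathcal{G}$ could meet $\mathbb{R}^{n+m}$ in directions other than $\mathbb{R}\bm{\psi}$ and $\bm{P}$ would not contract real vectors at all — and it needs both the clean reformulation of the magnitude‑retrieval hypothesis in terms of $\tilde{\bm{A}}^*$ and the (to be recorded alongside Lemma~\ref{lem:orth2}) fact that $P_{\mathcal{F}}$ forces $(\bm{g},\bm{h})\perp\bm{\psi}$. The subsequent arithmetic matching the precise constant is elementary but delicate, and the uniformity of $\sigma_2$ in $k$ follows from $\tilde{\bm{A}}\tilde{\bm{A}}^*$ and the line $\mathbb{R}\bm{\psi}$ being independent of $\alpha_k$.
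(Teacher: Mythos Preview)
Your overall architecture matches the paper's: both identify the distinguished real vector $\bm{\psi}=(|\bm{x}^{\natural}|,\bm{b})$, use the optimality of $P_{\mathcal{F}}$ to get $(\bm{g},\bm{h})\perp\bm{\psi}$, and invoke Assumption~\ref{assu:1}(3) to obtain a strict contraction constant $\sigma_2<1$. In the paper $\bm{\psi}$ is exactly the leading right singular vector $\bm{\eta}_1$ of the real matrix $\mathcal{B}=\begin{pmatrix}\re(\bm{B})\\\im(\bm{B})\end{pmatrix}$, and your variational $\sigma_2=\max\{\|\bm{P}\bm{c}\|:\bm{c}\in\mathbb{R}^{n+m},\ \bm{c}\perp\bm{\psi},\ \|\bm{c}\|=1\}$ coincides with the paper's second singular value, since for real $\bm{c}$ one has $\|\bm{P}\bm{c}\|=\|\bm{B}\bm{c}\|=\|\mathcal{B}\bm{c}\|$. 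So up through the contraction on real vectors you are essentially reproducing the paper's Lemmas~4.3--4.5 by a more compactness-flavored route.

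The genuine gap is your treatment of the cross term. After peeling off $\Pi_{\bm{\psi}}\bm{a}$ you are left with $\|\bm{P}\bm{a}^{\perp}+i\bm{P}\bm{c}\|^2=\|\bm{P}\bm{a}^{\perp}\|^2+\|\bm{P}\bm{c}\|^2+2\re\langle\bm{P}\bm{a}^{\perp},i\bm{P}\bm{c}\rangle$. The mixed term equals $2\,\im\langle\bm{B}\bm{c},\bm{B}\bm{a}^{\perp}\rangle$, which is governed by the skew part $\re(\bm{B})^T\im(\bm{B})-\im(\bm{B})^T\re(\bm{B})$ and does not vanish; Cauchy--Schwarz only gives $|{\cdot}|\le 2\sigma_2^2\|\bm{a}^{\perp}\|\|\bm{c}\|$, and a weighted Young inequality then produces a coefficient of the form $\sigma_2^2(1+\varepsilon^{-1})$ on $\|\bm{c}\|^2$ for some $\varepsilon>0$. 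There is no way to ``adjust the value of $\sigma_2$'' to force this below $1$ once $\sigma_2$ is close to $1$, so the argument does not yield the stated convex-combination coefficients and, more importantly, does not establish a contraction at all.

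The paper sidesteps this entirely by working in the singular-vector basis of $\mathcal{B}$ (its Lemma labeled \emph{lem:oper}). That lemma shows $\bm{B}^*\bm{B}$ leaves $\operatorname{span}_{\mathbb{C}}\{\bm{\eta}_j:j\ge2\}$ invariant, so $\bm{v}_1=\Pi_{\bm{\eta}_1}(\bm{e},\bm{0})$ is \emph{complex}-orthogonal to $\bm{B}^*\bm{B}(\bm{v}_2+i(\bm{g},\bm{h}))$ and the norm splits with no cross term. The key point you are missing is not a sharper inequality but this structural invariance: you should bound $\|\bm{P}(\bm{a}^{\perp}+i\bm{c})\|$ as a single quantity on the invariant complement rather than bounding $\bm{P}\bm{a}^{\perp}$ and $\bm{P}\bm{c}$ separately. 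Once you import that block structure, the algebra yielding $(\sigma_2^2+(1-\sigma_2^2)\delta_1^2)\|\bm{e}\|^2+\sigma_2^2(\|\bm{g}\|^2+\|\bm{h}\|^2)$ is exactly the paper's final display, using $\|\bm{v}_1\|^2+\|\bm{v}_2\|^2=\|\bm{e}\|^2$ and $\|\bm{v}_1\|\le\delta_1\|\bm{e}\|$.
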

Combining the above two Lemmas, we have 
\begin{equation*}
  \norm{J_0(\bm{v}^k)}^2\leq (\sigma_2^2+(1-\sigma_2^2)\delta_1^2)\norm{\bm{e}}^2+\sigma_2^2(\norm{\bm{g}}^2+\norm{\bm{h}}^2)+ (1-\delta_2^2)\norm{\bm{f}}^2.
\end{equation*}
Therefore, there exists $0<\beta<1$, such that
\begin{equation*}
  \norm{\bm{w}^{k+1}}\leq (\beta+\epsilon_1)\norm{\bm{v}^k}.
\end{equation*}
And we can choose an appropriate $\epsilon_1$ such that $\beta+\epsilon_1<1$. By the definition of projection onto $\mathcal{F}$, we have
\begin{equation*}
  \norm{\bm{v}^{k+1}}\leq\norm{\bm{w}^{k+1}}\leq (\beta+\epsilon_1)\norm{\bm{v}^k}.
\end{equation*}

The remaining thing needs to be shown is that when the iteration gets close enough to the true solution $\bm{z}^{\natural}$ (up to a global phase), then it will stay close so that $  \norm{J_k-J_0}\leq \epsilon_1$ is true for later iterations. Suppose $\norm{\bm{v}^1}\le \norm{\bm{z}_{\text{DR}}^1-\bm{z}^{\natural}}<\epsilon_3<\frac{\epsilon_2}{2}$, then $\norm{\alpha_1\bm{z}_*^1-\bm{z}^{\natural}}\leq 2\epsilon_3<\epsilon_2$, and for all $\alpha_k\bm{z}_*^k$ for $k=2,\ldots$, we show $\norm{\alpha_k\bm{z}_*^k-\bm{z}^{\natural}}\leq \epsilon_2$.  According to the projection property, it implies
\begin{align*}
  \norm{\alpha_{k+1}\bm{z}_*^{k+1}-\alpha_k\bm{z}_*^k}&=\norm{(\alpha_{k+1}\bm{z}_*^{k+1}-\bm{z}_{\text{DR}}^{k+1})+(\bm{z}_{\text{DR}}^{k+1}-\alpha_k\bm{z}_*^k)}\\
                                                      & \leq \norm{\bm{z}_{\text{DR}}^{k+1}-\alpha_{k+1}\bm{z}_*^{k+1}}+\norm{\bm{z}_{\text{DR}}^{k+1}-\alpha_k\bm{z}_*^k}\\
                                                      &\leq 2\norm{\bm{z}_{\text{DR}}^{k+1}-\alpha_k\bm{z}_*^k}\leq 2(\beta+\epsilon_1) \norm{\bm{v}^k}.
\end{align*}
Therefore,
\begin{align*}
  \norm{\alpha_{k+1}\bm{z}_*^{k+1}-\bm{z}^{\natural}}&=\norm{\alpha_{k+1}\bm{z}_*^{k+1}-\alpha_{k}\bm{z}_*^{k}+\alpha_{k}\bm{z}_*^{k}-\bm{z}^{\natural}}\\
  & \leq \norm{\alpha_{k+1}\bm{z}_*^{k+1}-\alpha_{k}\bm{z}_*^{k}}+\norm{\alpha_{k}\bm{z}_*^{k}-\bm{z}^{\natural}}.
\end{align*}
Iterating the above inequality backward, we have
\begin{align*}
  \norm{\alpha_{k+1}\bm{z}_*^{k+1}-\bm{z}^{\natural}}&\leq \norm{\alpha_{1}\bm{z}_*^{1}-\bm{z}^{\natural}}+\sum_{j=1}^k\norm{\alpha_{j+1}\bm{z}_*^{j+1}-\alpha_{k}\bm{z}_*^{j}}\\
                                                       & \leq \norm{\alpha_{1}\bm{z}_*^{1}-\bm{z}_{DR}^1}+\norm{\bm{z}_{DR}^1-\bm{z}^{\natural}}+2(\beta+\epsilon_1) \sum_{j=1}^k\norm{\bm{v}^j}\\
                                                     &< 2\epsilon_3+2\norm{\bm{v}^1}\sum_{j=1}^k(\beta+\epsilon_1)^j\\
                                                       &< \frac{2\epsilon_3}{1-(\beta+\epsilon_1)}.
\end{align*}
Hence, one can choose $\epsilon_3=\frac{1-(\beta+\epsilon_1)}{2}\epsilon_2$ small enough such that $\alpha_k\bm{y}_*^k$ is uniformly close to $\bm{y}^{\natural}$ such that $\norm{J_k-J_0}\leq \epsilon_1$.

Note that when $\norm{\bm{z}_{\text{DR}}^1-\bm{z}^{\natural}}$ is small enough, both $\norm{\bm{v}^k}$ and $\norm{\alpha_k\bm{z}_*^k-\bm{z}^{\natural}}$ can be controlled. Therefore, the high order term along with the linear approximation of $J_k(\bm{v}^k)$ at each step can also be uniformly bounded. From the above contraction property of the DR iteration~\eqref{eq:iter}, we have the following linear convergence bound for $\dist(\bm{x}^{k},\bm{x}^{\natural}):=\min_{\lvert\alpha\rvert=1}\norm{\bm{x}^{k}-\alpha\bm{x}^{\natural}}$
\begin{equation*}
  \dist(\bm{x}^{k},\bm{x}^{\natural})\leq \norm{\bm{x}^{k}-\alpha_{k}\bm{x}^{\natural}}=\norm{\bm{x}_{\text{DR}}^{k}-\alpha_{k}\bm{x}^{\natural}}\leq\norm{\bm{v}^k}\leq (\beta+\epsilon_1)^{k-1}\norm{\bm{v}^1}.
\end{equation*}
Thus the convergence of $\bm{x}^k$ to $\bm{x}^{\natural}$ (up to a global phase) can be ensured.

Another thing needs to verify is that $\bm{x}_{\text{DR}}^k\in\mathcal{V}_k$, i.e., $\norm{\bm{x}_{\text{DR}}^k-\alpha_k\bm{z}^{\natural}}\leq\frac{b_{\min}}{2}$ for $k\geq 1$, where $\alpha_k$ is determined by $P_{\mathcal{F}}\bm{z}_{\text{DR}}^k$. Note that $\norm{\bm{x}_{\text{DR}}^k-\alpha_k\bm{z}^{\natural}}\leq \norm{\bm{x}_{\text{DR}}^k-\alpha_k\bm{z}_*^k}+\norm{\bm{\theta}_k}\leq \frac{b_{\min}}{6}+\frac{b_{\min}}{3}\leq\frac{b_{\min}}{2}$. 
Here is the main result for the noiseless case of phase retrieval problem. 
\begin{theorem}
\label{thm:main}
For problem~\eqref{eq:prob} in noiseless case ($\epsilon_i^{\text{noise}}=0,i=1,\ldots,m$) with the sampling vectors satisfying the assumption~\ref{assu:1}, if $\norm{\bm{z}_{\text{DR}}^1-\bm{z}^{\natural}}\leq \epsilon_3<\frac{b_{\min}}{6}$, where $\epsilon_3$ depending on $\epsilon_1$ (such that $\norm{J_k-J_0}\leq \epsilon_1$) is small enough. We have that, for GPS~\eqref{eq:gps}, there exists a constant $0<\gamma<1$ such that
\begin{equation*}
  \dist(\bm{x}^k,\bm{x}^{\natural})\leq \gamma^{k-1}\norm{\bm{z}_{\text{DR}}^1-\bm{z}_*^1}\leq \gamma^{k-1}\norm{\bm{z}_{\text{DR}}^1-\bm{z}^{\natural}},
\end{equation*}
where
\begin{equation*}
  \bm{z}_{\text{DR}}^k =
  \begin{pmatrix}
    \bm{\lambda}^k+\bm{x}^k\\
    \bm{\nu}^k+\bm{y}^k
  \end{pmatrix}
\quad \text{ and } \bm{z}_*^k = P_{\mathcal{F}}\bm{z}_{\text{DR}}^k.
\end{equation*}
\end{theorem}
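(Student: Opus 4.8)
The proof is essentially an assembly of the linearization analysis carried out immediately before the statement; the plan is to make that analysis rigorous and to close the bootstrap loop that keeps the iterates inside the region where the analysis applies. First I would pass from GPS~\eqref{eq:gps} to its Douglas--Rachford form~\eqref{eq:iter}, so that a single sequence $\bm{z}_{\text{DR}}^k$ is tracked and $\bm{x}^k=\bm{x}_{\text{DR}}^k$ by~\eqref{eq:drp}. At step $k$, I project onto the fixed point set $\mathcal{F}$ to obtain $\bm{z}_*^k=(\alpha_k\bm{x}^\natural,\alpha_k\bm{y}_*^k)$ with $\bm{y}_*^k=(\bm{b}+\bm{\theta}_k)\circ\bm{y}^\natural/\lvert\bm{y}^\natural\rvert$, subtract it from both sides of~\eqref{eq:iter}, and arrive at the error recursion~\eqref{eq:conc}. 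Using $\prox_g=\bm{I}$ (since $g=0$) and the directional derivative~\eqref{eq:phasediff} of $\bm{z}\mapsto\bm{z}/\lvert\bm{z}\rvert$, I linearize the $\prox_\phi$ term about $\bm{z}_*^k$; after the unitary change of variables to $\bm{w}^{k+1}$ and $\bm{v}^k$ via $\bm{\Omega}$, the recursion reads $\bm{w}^{k+1}=J_k(\bm{v}^k)+\text{h.o.t.}$ with $J_k$ an explicit affine map depending on $\bm{\theta}_k$.

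The heart of the argument is the strict contraction $\norm{J_0(\bm{v}^k)}\leq\beta\norm{\bm{v}^k}$ for some fixed $\beta<1$. Writing $\bm{v}^k=(\bm{e};\bm{f})+i(\bm{g};\bm{h})$ and using that $\bm{B}^*\bm{B}$ is an orthogonal projection (because $\bm{C}^*$ is isometric), $J_0(\bm{v}^k)$ splits orthogonally as $\bm{B}^*\bm{B}(\bm{e}+i\bm{g};i\bm{h})+(\bm{I}-\bm{B}^*\bm{B})(\bm{0};\bm{f})$, which already gives $\norm{J_0(\bm{v}^k)}\leq\norm{\bm{v}^k}$. The strict gain then comes from two quantitative facts: Lemma~\ref{lem:part2} (to be proved in the next section) supplies the factor $\sigma_2^2+(1-\sigma_2^2)\delta_1^2<1$ on the $\bm{e}$-component, while the $\delta_2$-estimate of the preceding Lemma — which rests on the orthogonality $\bm{f}\perp\mathcal{C}'$ from Lemma~\ref{lem:orth2} — yields $\norm{(\bm{I}-\bm{B}^*\bm{B})(\bm{0};\bm{f})}^2\leq(1-\delta_2^2)\norm{\bm{f}}^2$. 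Combining them produces $\norm{J_0(\bm{v}^k)}^2\leq(\sigma_2^2+(1-\sigma_2^2)\delta_1^2)\norm{\bm{e}}^2+\sigma_2^2(\norm{\bm{g}}^2+\norm{\bm{h}}^2)+(1-\delta_2^2)\norm{\bm{f}}^2\leq\beta^2\norm{\bm{v}^k}^2$ for an appropriate $\beta<1$.

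To conclude, I would close the bootstrap. Continuity of $J_k$ in $\bm{\theta}_k$ gives $\norm{J_k-J_0}\leq\epsilon_1$ whenever $\norm{\bm{\theta}_k}\leq\epsilon_2$, so $\norm{\bm{w}^{k+1}}\leq(\beta+\epsilon_1+o(1))\norm{\bm{v}^k}$, and the projection inequality $\norm{\bm{v}^{k+1}}\leq\norm{\bm{w}^{k+1}}$ follows from $\bm{z}_*^{k+1}$ being the $\mathcal{F}$-projection of $\bm{z}_{\text{DR}}^{k+1}$. Fixing $\epsilon_1$ with $\gamma:=\beta+\epsilon_1<1$ and then $\epsilon_3=\frac{1-\gamma}{2}\epsilon_2$, a telescoping bound using $\norm{\alpha_{k+1}\bm{z}_*^{k+1}-\alpha_k\bm{z}_*^k}\leq 2\gamma\norm{\bm{v}^k}$ shows $\norm{\alpha_k\bm{z}_*^k-\bm{z}^\natural}<\frac{2\epsilon_3}{1-\gamma}=\epsilon_2$, hence $\norm{\bm{\theta}_k}\leq\epsilon_2$, hence $\bm{z}_{\text{DR}}^k\in\mathcal{V}_k$ (via $\norm{\bm{z}_{\text{DR}}^k-\alpha_k\bm{z}^\natural}\leq\norm{\bm{x}_{\text{DR}}^k-\alpha_k\bm{z}_*^k}+\norm{\bm{\theta}_k}\leq\frac{b_{\min}}{6}+\frac{b_{\min}}{3}$) for every $k$, so the linearization and all lemmas stay valid along the whole trajectory and the $o(\norm{\bm{v}^k})$ remainder is uniformly dominated and absorbed into $\epsilon_1$. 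Iterating $\norm{\bm{v}^k}\leq\gamma\norm{\bm{v}^{k-1}}$ and using $\dist(\bm{x}^k,\bm{x}^\natural)\leq\norm{\bm{x}^k-\alpha_k\bm{x}^\natural}=\norm{\bm{x}_{\text{DR}}^k-\alpha_k\bm{x}^\natural}\leq\norm{\bm{v}^k}$ then gives the claimed bound, with $\norm{\bm{v}^1}\leq\norm{\bm{z}_{\text{DR}}^1-\bm{z}_*^1}\leq\norm{\bm{z}_{\text{DR}}^1-\bm{z}^\natural}$. The main obstacle is exactly this simultaneous choice of $\epsilon_1,\epsilon_2,\epsilon_3$ making the neighborhood forward-invariant while the projection anchor $\alpha_k$ drifts only slightly and the discarded higher-order term never spoils the contraction — together with supplying the deferred proof of Lemma~\ref{lem:part2}, whose spectral constant $\sigma_2<1$ is what ultimately forces the $\bm{e}$-component to contract.
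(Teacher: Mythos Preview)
Your proposal is correct and follows essentially the same route as the paper: the argument is precisely the inline analysis of Section~\ref{subsec:GPS}, namely the DR reformulation, linearization about $P_{\mathcal{F}}\bm{z}_{\text{DR}}^k$, the contraction of $J_0$ via the orthogonal splitting together with Lemma~\ref{lem:part2} and the $\delta_2$-estimate resting on Lemma~\ref{lem:orth2}, and the bootstrap/telescoping choice $\epsilon_3=\tfrac{1-\gamma}{2}\epsilon_2$ ensuring forward invariance. The only cosmetic slip is the mixed $\bm{x}_{\text{DR}}^k$/$\bm{z}_{\text{DR}}^k$ in your $\mathcal{V}_k$ check (it should read $\norm{\bm{z}_{\text{DR}}^k-\alpha_k\bm{z}_*^k}+\norm{\bm{\theta}_k}$), which the paper itself also commits.
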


\begin{remark}
 In the basin of local convergence, $\dist(\bm{x}^k,\bm{x}^{\natural})$ is generally not monotonically decreasing, it may oscillate. However, from Theorem~\ref{thm:main}, it goes to zero in a controlled way that it is bounded by a linear convergence.
\end{remark}

\subsection{Error Bound in Noisy Case}

In noisy case, the measurement $\tilde{\bm{b}}=\bm{b}+\bm{\epsilon}^{\text{noise}}$, then we have the following result. 

\begin{theorem}[Noisy Case]
  Assume that the sampling vectors $\{\bm{a}_i\}_{i=1}^m$ satisfy assumption~\ref{assu:1}, and the measurements $\bm{b}$ are corrupted by noise $\bm{\epsilon}^{\text{noise}}$. If $\norm{\bm{z}_{\text{DR}}^1-\bm{z}_*^1}$ is small enough but greater than the noise level, i.e., $\norm{\bm{\epsilon}^{\text{noise}}}$, furthermore, the noise level $\norm{\bm{\epsilon}^{\text{noise}}}$ is small enough such that $\bm{z}_{\text{DR}}^k$ is still in $\mathcal{V}^k$ and $\norm{\alpha_k\bm{z}_*^k-\bm{z}^{\natural}}\leq \epsilon_2$, we have
  \begin{equation}
    \label{eq:loc-dr-noise}
    \dist(\bm{x}^{k},\bm{x}^{\natural})\leq (\beta+\epsilon_1)^{k-1}\norm{\bm{z}_{\text{DR}}^1-\bm{z}_*^1}+\frac{1}{1-(\beta+\epsilon_1)}\norm{\bm{\epsilon}^{\text{noise}}}.
  \end{equation}
\end{theorem}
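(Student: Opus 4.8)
The plan is to regard the noisy GPS iteration as an $O(\|\bm{\epsilon}^{\text{noise}}\|)$-perturbation of the noiseless one analyzed in Section~\ref{sec:loc}, so that the geometric contraction established there for $g=0$ degrades into an inexact (perturbed) contraction with a noise floor, and then to sum the resulting scalar recursion. The essential point is that corrupting $\bm{b}$ to $\tilde{\bm{b}}=\bm{b}+\bm{\epsilon}^{\text{noise}}$ changes \emph{only} the indicator $f$, hence only the operator $\prox_f$ inside the equivalent DR iteration~\eqref{eq:iter}; everything else — the matrix $\bm{M}=\bm{C}^*\bm{C}$, the fixed point set $\mathcal F$ (which is still defined with the true $\bm{b}$), the projection $P_{\mathcal F}$, the linearization $J_0$, and Lemmas~\ref{lem:orth2}--\ref{lem:part2} — is unaffected.

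First I would make the perturbation estimate precise. Writing $T_{\bm{b}}$ and $T_{\tilde{\bm{b}}}$ for the one-step DR maps with $\prox_f$ the projection onto $\{|\bm{y}|=\bm{b}\}$ and onto $\{|\bm{y}|=\tilde{\bm{b}}\}$ respectively, and using $T(\bm{z})=\bm{z}+\bm{M}(2\prox_\phi\bm{z}-\bm{z})-\prox_\phi\bm{z}$ with $\bm{M}$ a projection, the difference collapses to
\begin{equation*}
  T_{\tilde{\bm{b}}}(\bm{z})-T_{\bm{b}}(\bm{z})=(2\bm{M}-\bm{I})\bigl(\prox_{\phi_{\tilde{\bm{b}}}}(\bm{z})-\prox_{\phi_{\bm{b}}}(\bm{z})\bigr),
\end{equation*}
and $2\bm{M}-\bm{I}$ is an isometry (a reflection through $\range\bm{M}$). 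On the balls $\mathcal V_k$ of radius $b_{\min}/2$ one has $|\bm{y}|>0$, so $\prox_{f_{\tilde{\bm{b}}}}(\bm{y})-\prox_{f_{\bm{b}}}(\bm{y})=(\tilde{\bm{b}}-\bm{b})\circ\frac{\bm{y}}{|\bm{y}|}$, whose norm is exactly $\|\bm{\epsilon}^{\text{noise}}\|$; and since $g=0$ the $\bm{x}$-block of $\prox_\phi$ is unchanged. Hence $\|T_{\tilde{\bm{b}}}(\bm{z})-T_{\bm{b}}(\bm{z})\|\le\|\bm{\epsilon}^{\text{noise}}\|$ uniformly on $\mathcal V_k$, which is where the constant $1$ multiplying $\|\bm{\epsilon}^{\text{noise}}\|$ in the bound originates.

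Next I would splice in the noiseless contraction. Let $\bm{z}_{\text{DR}}^k$ denote the iterates of the \emph{noisy} run, $\bm{z}_*^k=\alpha_k\bm{z}_*^k\text{-branch}=P_{\mathcal F}\bm{z}_{\text{DR}}^k$, and $d_k:=\dist(\bm{z}_{\text{DR}}^k,\mathcal F)=\|\bm{z}_{\text{DR}}^k-\alpha_k\bm{z}_*^k\|=\|\bm{v}^k\|$. Exactly the computation in Section~\ref{subsec:GPS} — the linearization of $T_{\bm{b}}$ about $P_{\mathcal F}\bm{z}_{\text{DR}}^k$ plus Lemmas~\ref{lem:orth2}--\ref{lem:part2}, valid whenever $\bm{z}_{\text{DR}}^k\in\mathcal V_k$ and $\|\alpha_k\bm{z}_*^k-\bm{z}^{\natural}\|\le\epsilon_2$ — gives $\|T_{\bm{b}}(\bm{z}_{\text{DR}}^k)-\alpha_k\bm{z}_*^k\|\le(\beta+\epsilon_1)d_k$ with $\beta+\epsilon_1<1$. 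Combining with the triangle inequality and the previous paragraph, and using that $\alpha_k\bm{z}_*^k\in\mathcal F$,
\begin{equation*}
  d_{k+1}\le\|\bm{z}_{\text{DR}}^{k+1}-\alpha_k\bm{z}_*^k\|\le\|T_{\bm{b}}(\bm{z}_{\text{DR}}^k)-\alpha_k\bm{z}_*^k\|+\|T_{\tilde{\bm{b}}}(\bm{z}_{\text{DR}}^k)-T_{\bm{b}}(\bm{z}_{\text{DR}}^k)\|\le(\beta+\epsilon_1)\,d_k+\|\bm{\epsilon}^{\text{noise}}\|.
\end{equation*}
Unrolling this geometric recursion yields $d_k\le(\beta+\epsilon_1)^{k-1}d_1+\bigl(\sum_{j\ge0}(\beta+\epsilon_1)^j\bigr)\|\bm{\epsilon}^{\text{noise}}\|=(\beta+\epsilon_1)^{k-1}\|\bm{z}_{\text{DR}}^1-\bm{z}_*^1\|+\frac{1}{1-(\beta+\epsilon_1)}\|\bm{\epsilon}^{\text{noise}}\|$; finally, since $\bm{x}^k=\bm{x}_{\text{DR}}^k$ (from $\prox_g=\mathrm{id}$) and the $\bm{x}$-block of $\bm{z}_*^k$ is $\alpha_k\bm{x}^{\natural}$, we get $\dist(\bm{x}^k,\bm{x}^{\natural})\le\|\bm{x}_{\text{DR}}^k-\alpha_k\bm{x}^{\natural}\|\le d_k$, which is~\eqref{eq:loc-dr-noise}.

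The main obstacle is the bootstrapping that keeps the iterates in the region where all of the above holds: one must verify $\bm{z}_{\text{DR}}^k\in\mathcal V_k$ and $\|\alpha_k\bm{z}_*^k-\bm{z}^{\natural}\|\le\epsilon_2$ for \emph{every} $k$. In the noiseless proof this followed from summability of $\sum_k\|\bm{v}^k\|$, which now fails because of the additive noise floor; the argument closes only if $\|\bm{\epsilon}^{\text{noise}}\|$ is small enough that the asymptotic radius $\frac{1}{1-(\beta+\epsilon_1)}\|\bm{\epsilon}^{\text{noise}}\|$, together with the transient $\sum_{j}(\beta+\epsilon_1)^j\|\bm{v}^1\|$ and the contribution of $\|\bm{\theta}_k\|$, stays below $b_{\min}/6$ (equivalently keeps $\bm{z}_{\text{DR}}^k$ in $\mathcal V^k$) — which is precisely the hypothesis imposed on the noise level. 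I would carry out this induction exactly as at the end of Section~\ref{subsec:GPS}, replacing each occurrence of $(\beta+\epsilon_1)^j\|\bm{v}^1\|$ by $(\beta+\epsilon_1)^j\|\bm{v}^1\|+\sum_{i<j}(\beta+\epsilon_1)^i\|\bm{\epsilon}^{\text{noise}}\|$ and bounding the latter geometric tail uniformly. The requirement that $\|\bm{z}_{\text{DR}}^1-\bm{z}_*^1\|$ exceed the noise level plays no role in the inequality itself; it only ensures the bound is informative — that the contracting first term is not swamped from the outset by the irreducible second term. The edge case $|y_i|=0$ (convention $z_i/|z_i|=0$) does not arise on $\mathcal V_k$, since $|\bm{y}^{\natural}|=\bm{b}\ge b_{\min}$.
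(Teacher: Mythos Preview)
Your proposal is correct and follows essentially the same approach as the paper: the paper writes the noisy iterate as $\bm{w}^{k+1}=J_k(\bm{v}^k)+(2\bm{B}^*\bm{B}-\bm{I})\begin{pmatrix}\bm{0}\\\bm{\Omega}_y^*(\bm{\epsilon}^{\text{noise}}\circ\frac{\bm{y}_{\text{DR}}^k}{|\bm{y}_{\text{DR}}^k|})\end{pmatrix}+\text{h.o.t.}$, bounds the extra term by $\|\bm{\epsilon}^{\text{noise}}\|$ (using that $2\bm{B}^*\bm{B}-\bm{I}$ is an isometry, exactly your reflection observation), obtains $\|\bm{v}^{k+1}\|\le(\beta+\epsilon_1)\|\bm{v}^k\|+\|\bm{\epsilon}^{\text{noise}}\|$, and unrolls. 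Your operator-level framing $T_{\tilde{\bm{b}}}-T_{\bm{b}}=(2\bm{M}-\bm{I})(\prox_{\phi_{\tilde{\bm{b}}}}-\prox_{\phi_{\bm{b}}})$ is the same decomposition before conjugation by the unitary $\bm{\Omega}$; the paper simply \emph{assumes} the bootstrapping hypotheses you spell out, so your more careful discussion of that induction is a bonus rather than a departure.
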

\begin{proof}
We first compute
\begin{align*}
\bm{w}^{k+1}&:=\begin{pmatrix}
  \bm{\Omega}_x^*(\bm{x}_{\text{DR}}^{k+1}-\alpha_k\bm{x}^{\natural})\\ \bm{\Omega}_y^*(\bm{y}_{\text{DR}}^{k+1}-\alpha_k\bm{y}_*^k)
\end{pmatrix}\\
 &=J_k(\bm{v}^k)+(2\bm{B}^*\bm{B}-\bm{I})
  \begin{pmatrix}
    \bm{0}\\\bm{\Omega}_y^*\left(\bm{\epsilon}^{\text{noise}}\circ \frac{\bm{y}_{\text{DR}}^k}{\lvert \bm{y}_{\text{DR}}^k\rvert}\right)
  \end{pmatrix}
+\text{h.o.t.}
\end{align*}
Using the same argument as for the noiseless case, we have
\begin{equation}
\begin{aligned}
  \norm{\bm{v}^{k+1}}&\leq\norm{\bm{w}^{k+1}}\\
  &\leq (\beta+\epsilon_1)\norm{\bm{v}^k} + \norm{\bm{\epsilon}^{\text{noise}}}.
\end{aligned}
\end{equation}
Iterating the above inequality backward, we get
\begin{equation*}
  \norm{\bm{v}^k}\leq (\beta+\epsilon_1)^{k-1}\norm{\bm{v}^1} + \norm{\bm{\epsilon}^{\text{noise}}}\sum_{j=0}^{k-2}(\beta+\epsilon_1)^j\leq (\beta+\epsilon_1)^{k-1}\norm{\bm{v}^1} + \frac{1}{1-(\beta+\epsilon_1)}\norm{\bm{\epsilon}^{\text{noise}}}.
\end{equation*}
By $\dist(\bm{x}^k,\bm{x}^{\natural})\leq \norm{\bm{v}^k}$, we obtain the estimation of reconstruction in noisy case. 
\end{proof}

Applying GPS for noisy phase retrieval, its performance heavily depends on the noisy level. The requirement on noisy level is very strict. Thus for noisy case, RGPS is more appropriate and the performance is much better. 
\subsection{Local Convergence of RGPS}
\label{subsec:RGPS}
Considering the robust GPS (RGPS) for noiseless setting, the operator matrix $\bm{M}$ in~\eqref{eq:iter} is replaced by the following matrix
\begin{equation*}
  \tilde{\bm{M}}=t\bm{I}+(1-t)\bm{M}, 0< t< t_{\max}<1.
\end{equation*}

When considering the local convergence of RGPS via equivalent Douglas-Rachford iteration, we note that the fixed point set of the iteration scheme is no longer the set $\mathcal{F}$ of GPS. Thus local convergence of RGPS is different from the above analysis.

Define set $\mathcal{F}_0=\left\{
  \begin{pmatrix}
    \alpha\bm{x}^{\natural}\\
    \alpha\bm{y}^{\natural}
  \end{pmatrix}\Bigm |\lvert\alpha\rvert=1
\right\}$, we can check that elements of $\mathcal{F}_0$ belong to the fixed point set of RGPS~\footnote{We can show that the elements of $\mathcal{F}$ with $\bm{\theta}\neq 0$ is not a fixed point of RGPS. Besides $\mathcal{F}_0$, there may exist other nontrivial fixed points.}. In the following, we investigate the local convergence of RGPS around the point $\bm{z}^{\natural}=\begin{pmatrix}
  \bm{x}^{\natural}\\
  \bm{y}^{\natural}
\end{pmatrix}$. And we assume there is no other fixed points around the solution up to a global phase shift. At $k$-th step, following the same argument as before, given $\bm{z}_{\text{DR}}^k$, we denote $P_{\mathcal{F}_0}(\bm{z}_{\text{DR}}^k)=
\begin{pmatrix}
  \alpha_k\bm{x}^{\natural}\\
  \alpha_k\bm{y}^{\natural}
\end{pmatrix}
$ and $\bm{\Omega}=
\begin{pmatrix}
  \bm{\Omega}_x & \\
  & \bm{\Omega}_y
\end{pmatrix}=\diag\left(\frac{\alpha_k\bm{x}^{\natural}}{\lvert \bm{x}^{\natural}\rvert},\frac{\alpha_k\bm{y}^{\natural}}{\lvert \bm{y}^{\natural}\rvert}\right)$, we have the iteration scheme ($\bm{B}=\bm{C}\bm{\Omega}$)
\begin{equation*}
\begin{aligned}
     \begin{pmatrix}
    \bm{x}_{\text{DR}}^{k+1}-\alpha_k\bm{x}^{\natural}\\\bm{y}_{\text{DR}}^{k+1}-\alpha_k\bm{y}^{\natural}
  \end{pmatrix}&=\bm{\Omega}(1-t)(\bm{I}-\bm{B}^*\bm{B})\begin{pmatrix}
    \bm{\Omega}_x^*(\bm{x}_{\text{DR}}^{k}-\alpha_k\bm{x}^{\natural})\\\bm{\Omega}_y^*(\bm{y}_{\text{DR}}^{k}-\alpha_k\bm{y}^{\natural})
  \end{pmatrix}\\
 & {}+\bm{\Omega}((2(1-t))\bm{B}^*\bm{B}-(1-2t)\bm{I})
  \begin{pmatrix}
    \bm{\Omega}_x^*(\bm{x}_{\text{DR}}^k-\alpha_k\bm{x}^{\natural})\\ i\im\left(\bm{\Omega}_y^*(\bm{y}_{\text{DR}}^{k}-\alpha_k\bm{y}^{\natural})\right)
  \end{pmatrix}+\text{h.o.t.}\\
  &:=\bm{\Omega}J_0^t(\tilde{\bm{v}}^k)+\text{h.o.t.}
\end{aligned}
\end{equation*}
where 
\begin{align*}
  \tilde{\bm{v}}^k&=
  \begin{pmatrix}
    \bm{\Omega}_x^*(\bm{x}_{\text{DR}}^k-\alpha_k\bm{x}^{\natural})\\
    \bm{\Omega}_y^*(\bm{y}_{\text{DR}}^{k}-\alpha_k\bm{y}^{\natural})
  \end{pmatrix}=
  \begin{pmatrix}
    \frac{\overline{\alpha_k\bm{x}^{\natural}}}{\lvert \bm{x}^{\natural}\rvert}\circ (\bm{x}_{\text{DR}}^k-\alpha_k\bm{x}^{\natural})\\
 \frac{\overline{\alpha_k\bm{y}^{\natural}}}{\lvert \bm{y}^{\natural}\rvert} \circ (\bm{y}_{\text{DR}}^{k}-\alpha_k\bm{y}^{\natural})
\end{pmatrix}\\
&=\re(\tilde{\bm{v}}^k)+\im(\tilde{\bm{v}}^k)\\
&:=
\begin{pmatrix}
  \tilde{\bm{e}}\\
  \tilde{\bm{f}}
\end{pmatrix}+i
\begin{pmatrix}
  \tilde{\bm{g}}\\
  \tilde{\bm{h}}
\end{pmatrix}.
\end{align*}
We still compute
\begin{equation*}
\begin{aligned}
  J_0^t(\tilde{\bm{v}}^k)&=(1-t)(\bm{I}-\bm{B}^*\bm{B})\begin{pmatrix}
  \bm{0}\\\tilde{\bm{f}}
\end{pmatrix}+\left(t\bm{I}+(1-t)\bm{B}^*\bm{B}\right)\begin{pmatrix}
  \tilde{\bm{e}}+i\tilde{\bm{g}}\\i\tilde{\bm{h}}
\end{pmatrix}\\
& = (1-t)J_0(\tilde{\bm{v}}^k)+t\begin{pmatrix}
  \tilde{\bm{e}}+i\tilde{\bm{g}}\\i\tilde{\bm{h}}
\end{pmatrix}.
\end{aligned}
\end{equation*}
Therefore, we have
\begin{equation*}
  \norm{J_0^t(\tilde{\bm{v}}^k)}\leq (1-t)\norm{J_0(\tilde{\bm{v}}^k)}+t\norm{
    \begin{pmatrix}
      \tilde{\bm{e}}+i\tilde{\bm{g}}\\i\tilde{\bm{h}}
    \end{pmatrix}
}.
\end{equation*}
Using the same argument for Lemma~\ref{lem:part2}, there exists $0<\tilde{\beta}<1$, such that
\begin{align*}
  \norm{J_0(\tilde{\bm{v}}^k)}^2&\leq (\sigma_2^2+(1-\sigma_2^2)\delta_1^2)\norm{\tilde{\bm{e}}}^2+\sigma_2^2(\norm{\tilde{\bm{g}}}^2+\norm{\tilde{\bm{h}}}^2)+ \norm{\tilde{\bm{f}}}^2\\
                                &\leq \tilde{\beta}^2(\norm{\tilde{\bm{e}}}^2+\norm{\tilde{\bm{g}}}^2+\norm{\tilde{\bm{h}}}^2)+\norm{\tilde{\bm{f}}}^2.
\end{align*}
However, due to the possibility that $\tilde{\bm{f}}$ may be in the null space of $\tilde{\bm{B}}$, one can only have unity before $\norm{\tilde{\bm{f}}}^2$.

Define $r(t) = (1-t)\sqrt{\tilde{\beta}^2(\norm{\tilde{\bm{e}}}^2+\norm{\tilde{\bm{g}}}^2+\norm{\tilde{\bm{h}}}^2)+\norm{\tilde{\bm{f}}}^2}+t\sqrt{\norm{\tilde{\bm{e}}}^2+\norm{\tilde{\bm{g}}}^2+\norm{\tilde{\bm{h}}}^2}$, then 
\begin{equation*}
  \norm{J_0^t(\bm{v}^k)}\leq \max\left(r(0),r(t_{\max})\right).
\end{equation*}
Now we show that if $t$ satisfies some condition, $0< t< t_{\max}=\frac{2(1-\tilde{\beta}^2)}{2-\tilde{\beta}^2}$, where $\tilde{\beta}^2=\max\{\sigma_2^2+(1-\sigma_2^2)\delta_1^2,\sigma_2^2\}$, 
 there exists a constant $\gamma<1$ such that $\norm{J_0^t(\bm{v}^k)}<\gamma\norm{\tilde{\bm{v}}^k}$, which implies local convergence for RGPS.

\begin{lem}
  When $0< t< t_{\max}= \frac{2\alpha_c}{\alpha_c+1}$ and $\alpha_c=1-\tilde{\beta}^2<1$, then we have that
  \begin{equation*}
    (1-t)\sqrt{\norm{\tilde{\bm{v}}^k}^2-\alpha_c\norm{\tilde{\bm{p}}^k}^2}+t\sqrt{\norm{\tilde{\bm{p}}^k}^2}\leq \sqrt{(1-t)^2+\frac{t^2}{\alpha_c}}\norm{\tilde{\bm{v}}^k}:=\gamma\norm{\tilde{\bm{v}}^k}.
  \end{equation*}
\end{lem}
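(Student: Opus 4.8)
The plan is to reduce the stated bound to a one-parameter inequality in the single scalar $s=\norm{\tilde{\bm p}^k}^2/\norm{\tilde{\bm v}^k}^2$ and then close it with a single application of the Cauchy--Schwarz inequality. The hypothesis $0<t<t_{\max}$ is not needed for the inequality itself; it only guarantees that the resulting constant $\gamma$ is strictly below $1$, i.e. that the bound is a genuine contraction.

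First I would record the two structural facts coming from the splitting $\tilde{\bm v}^k=\bigl(\tilde{\bm e}+i\tilde{\bm g},\,i\tilde{\bm h}\bigr)+\bigl(\bm{0},\tilde{\bm f}\bigr)$ used just above: writing $\tilde{\bm p}^k=\bigl(\tilde{\bm e}+i\tilde{\bm g},\,i\tilde{\bm h}\bigr)$ one has $\norm{\tilde{\bm p}^k}^2=\norm{\tilde{\bm e}}^2+\norm{\tilde{\bm g}}^2+\norm{\tilde{\bm h}}^2$ and $\norm{\tilde{\bm v}^k}^2=\norm{\tilde{\bm p}^k}^2+\norm{\tilde{\bm f}}^2$. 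In particular $0\le\norm{\tilde{\bm p}^k}^2\le\norm{\tilde{\bm v}^k}^2$, so because $0<\alpha_c<1$ the quantity $\norm{\tilde{\bm v}^k}^2-\alpha_c\norm{\tilde{\bm p}^k}^2=\norm{\tilde{\bm f}}^2+(1-\alpha_c)\norm{\tilde{\bm p}^k}^2\ge 0$ is nonnegative and the left-hand side of the claim is well defined. Discarding the trivial case $\tilde{\bm v}^k=\bm{0}$, dividing through by $\norm{\tilde{\bm v}^k}$ and setting $s=\norm{\tilde{\bm p}^k}^2/\norm{\tilde{\bm v}^k}^2\in[0,1]$, the assertion is equivalent to
\[
  (1-t)\sqrt{1-\alpha_c s}+t\sqrt{s}\ \le\ \sqrt{(1-t)^2+\tfrac{t^2}{\alpha_c}}\qquad\text{for every }s\in[0,1].
\]

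The key step is to view the left-hand side as the Euclidean inner product
\[
  (1-t)\cdot\sqrt{1-\alpha_c s}\;+\;\frac{t}{\sqrt{\alpha_c}}\cdot\sqrt{\alpha_c s}
\]
of the vectors $\bigl((1-t),\,t/\sqrt{\alpha_c}\bigr)$ and $\bigl(\sqrt{1-\alpha_c s},\,\sqrt{\alpha_c s}\bigr)$ in $\mathbb{R}^2$; since the second vector has squared length $(1-\alpha_c s)+\alpha_c s=1$, Cauchy--Schwarz bounds this inner product by $\sqrt{(1-t)^2+t^2/\alpha_c}$, which is exactly the right-hand side. Multiplying back by $\norm{\tilde{\bm v}^k}$ yields the lemma; together with the estimates $\norm{J_0^t(\tilde{\bm v}^k)}\le(1-t)\norm{J_0(\tilde{\bm v}^k)}+t\norm{\tilde{\bm p}^k}$ and $\norm{J_0(\tilde{\bm v}^k)}^2\le\norm{\tilde{\bm v}^k}^2-\alpha_c\norm{\tilde{\bm p}^k}^2$ derived above (the latter via Lemma~\ref{lem:part2} and $\alpha_c=1-\tilde{\beta}^2$), it gives $\norm{J_0^t(\tilde{\bm v}^k)}\le\gamma\norm{\tilde{\bm v}^k}$. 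Finally I would note that $\gamma^2=(1-t)^2+t^2/\alpha_c<1$ iff $t\bigl(1+1/\alpha_c\bigr)<2$, i.e. $t<2\alpha_c/(\alpha_c+1)=t_{\max}$, which is the only place the range of $t$ enters. I do not foresee any real obstacle: the lone nonobvious move is peeling off the factor $1/\sqrt{\alpha_c}$ so that Cauchy--Schwarz produces precisely the claimed constant, and everything else is elementary.
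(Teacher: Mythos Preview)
Your argument is correct and in fact cleaner than the paper's. The paper proves the inequality by calculus: it fixes $u=\norm{\tilde{\bm v}^k}^2$, treats $f(u,s)=(1-t)\sqrt{u-\alpha_c s}+t\sqrt{s}$ as a concave function of $s$, locates the interior critical point $s=t^2u/\bigl((1-t)^2\alpha_c^2+\alpha_c t^2\bigr)$ via $\partial_s f=0$, and evaluates $f$ there to obtain $\sqrt{(1-t)^2+t^2/\alpha_c}\,\sqrt{u}$. Your route bypasses the optimization entirely: after normalizing by $\norm{\tilde{\bm v}^k}$ you recognize the left-hand side as the inner product of $\bigl((1-t),\,t/\sqrt{\alpha_c}\bigr)$ with the unit vector $\bigl(\sqrt{1-\alpha_c s},\sqrt{\alpha_c s}\bigr)$ and apply Cauchy--Schwarz once. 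The two approaches are really the same maximization viewed from different angles---the equality case of your Cauchy--Schwarz step occurs precisely at the paper's critical $s$---but yours is shorter, requires no differentiation, and makes transparent why the constant comes out as it does. You also correctly isolate the role of the hypothesis $0<t<t_{\max}$: it is irrelevant to the inequality and serves only to ensure $\gamma<1$, which the paper states but does not separate as cleanly.
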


\begin{proof}
  In this proof, for fixed $t$, we denote $f(u,s)=(1-t)\sqrt{u-\alpha_cs}+t\sqrt{s}$ where $0<s\leq u$. By studying the partial derivatives $\partial_uf$ and $\partial_sf$, we know that $f(u,s)$ is monotonically increasing with respect to $u$ and $f(u,s)$ attains maximum at $s= \frac{t^2u}{(1-t)^2\alpha_c^2+\alpha_ct^2}$. At this point $f(u,s)\leq \sqrt{(1-t)^2+\frac{t^2}{\alpha_c}} \sqrt{u}$. If $0<t< t_{\max}=\frac{2\alpha_c}{\alpha_c+1}$, then this scaling factor is strictly less than unity. 
\end{proof}

By the same argument as before for GPS, we can obtain the following local convergence result for RGPS.

\begin{theorem}
  For RGPS, if the parameter $t$ satisfies that $0<t<t_{\max}=\frac{2(1-\tilde{\beta}^2)}{2-\tilde{\beta}^2}<1$, where $\tilde{\beta}$ depends on the singular values of $\bm{A}$, and $\dist(\bm{z}_{\text{DR}}^1,\bm{z}^{\natural})=\norm{\tilde{\bm{v}}^1}$ is small enough such that $\mathcal{F}_0$ is the only fixed point set around $\bm{z}^{\natural}$, there is a $0<\gamma<1$ such that
  \begin{equation*}
    \dist(\bm{x}^k,\bm{x}^{\natural})\leq (\gamma+o(1))^{k-1}\norm{\tilde{\bm{v}}^1}.
  \end{equation*}
\end{theorem}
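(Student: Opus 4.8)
The plan is to mirror the argument of Section~\ref{subsec:GPS}, now linearising the Douglas--Rachford form of RGPS around the moving anchor $P_{\mathcal{F}_0}(\bm{z}_{\text{DR}}^k)=(\alpha_k\bm{x}^{\natural},\alpha_k\bm{y}^{\natural})$ rather than the richer set $\mathcal{F}$. First I would note that, since $\bm{z}^{\natural}\ne 0$, the projection onto the one-real-parameter phase orbit $\mathcal{F}_0$ is well defined and smooth on a neighbourhood of $\bm{z}^{\natural}$, so the phase $\alpha_k$ and the unitary block-diagonal matrix $\bm{\Omega}=\diag(\bm{\Omega}_x,\bm{\Omega}_y)$ are determined by $\bm{z}_{\text{DR}}^k$ and depend continuously on it. With $\tilde{\bm{v}}^k=\bm{\Omega}^*(\bm{z}_{\text{DR}}^k-\alpha_k\bm{z}^{\natural})$ and $\tilde{\bm{w}}^{k+1}=\bm{\Omega}^*(\bm{z}_{\text{DR}}^{k+1}-\alpha_k\bm{z}^{\natural})$, the computations already carried out above give $\tilde{\bm{w}}^{k+1}=J^t_k(\tilde{\bm{v}}^k)+\text{h.o.t.}$, the decomposition $J^t_0(\tilde{\bm{v}}^k)=(1-t)J_0(\tilde{\bm{v}}^k)+t(\tilde{\bm{e}}+i\tilde{\bm{g}},\,i\tilde{\bm{h}})$, and the two quadratic bounds $\norm{J_0(\tilde{\bm{v}}^k)}^2\le\tilde{\beta}^2(\norm{\tilde{\bm{e}}}^2+\norm{\tilde{\bm{g}}}^2+\norm{\tilde{\bm{h}}}^2)+\norm{\tilde{\bm{f}}}^2$ and $\norm{(\tilde{\bm{e}}+i\tilde{\bm{g}},i\tilde{\bm{h}})}^2=\norm{\tilde{\bm{e}}}^2+\norm{\tilde{\bm{g}}}^2+\norm{\tilde{\bm{h}}}^2\le\norm{\tilde{\bm{v}}^k}^2$. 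Feeding these into the preceding Lemma with $\alpha_c=1-\tilde{\beta}^2$ yields $\norm{J^t_0(\tilde{\bm{v}}^k)}\le\gamma\norm{\tilde{\bm{v}}^k}$ with $\gamma=\sqrt{(1-t)^2+t^2/\alpha_c}<1$ precisely when $0<t<t_{\max}=\frac{2\alpha_c}{\alpha_c+1}=\frac{2(1-\tilde{\beta}^2)}{2-\tilde{\beta}^2}$, which is the condition in the statement and exhibits the dependence of $\tilde{\beta}$ (hence $t_{\max}$) on the singular values of $\bm{A}$ through $\delta_1$ and $\sigma_2$ of Lemma~\ref{lem:part2}.

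Next I would upgrade this linearised contraction to the full nonlinear map. The only nonlinearity is the phase map $\bm{y}\mapsto\bm{y}/\lvert\bm{y}\rvert$ on the $\bm{y}$-block, and every iterate is confined, by the $b_{\min}>0$ margin together with a ball of radius $b_{\min}/2$ centred at the anchor $\alpha_k\bm{z}^{\natural}$, to a region where $\lvert\bm{y}_{\text{DR}}^k\rvert$ is bounded away from $0$; hence the directional-derivative formula~\eqref{eq:phasediff} has a remainder that is $o(\norm{\tilde{\bm{v}}^k})$ uniformly in $k$. Likewise, the continuity of $\alpha_k$ and $\bm{\Omega}$ in $\bm{z}_{\text{DR}}^k$ gives, for a prescribed $\epsilon_1$, that $\norm{J^t_k-J^t_0}\le\epsilon_1$ once $\norm{\alpha_k\bm{z}^{\natural}-\bm{z}^{\natural}}$ is below a threshold depending on $\epsilon_1$. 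Combining, $\norm{\tilde{\bm{w}}^{k+1}}\le(\gamma+\epsilon_1)\norm{\tilde{\bm{v}}^k}+o(\norm{\tilde{\bm{v}}^k})$, and since $\bm{\Omega}$ is unitary and $\tilde{\bm{v}}^{k+1}$ is the projection-optimal representative of $\bm{z}_{\text{DR}}^{k+1}$ onto $\mathcal{F}_0$, the Pythagorean projection property gives $\norm{\tilde{\bm{v}}^{k+1}}\le\norm{\tilde{\bm{w}}^{k+1}}$.

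The remaining bookkeeping is the forward-invariance of the neighbourhood, identical in spirit to the GPS case: starting from $\norm{\tilde{\bm{v}}^1}$ small one bounds the anchor drift $\norm{\alpha_{k+1}\bm{z}^{\natural}-\alpha_k\bm{z}^{\natural}}\le 2(\gamma+\epsilon_1)\norm{\tilde{\bm{v}}^k}$, sums the resulting geometric series to keep $\norm{\alpha_k\bm{z}^{\natural}-\bm{z}^{\natural}}$ below the threshold that validates $\norm{J^t_k-J^t_0}\le\epsilon_1$, and keeps $\bm{z}_{\text{DR}}^k$ inside the ball above by the triangle inequality. Iterating $\norm{\tilde{\bm{v}}^{k+1}}\le(\gamma+\epsilon_1+o(1))\norm{\tilde{\bm{v}}^k}$ and using $\bm{x}^k=\bm{x}_{\text{DR}}^k$ from~\eqref{eq:drp} together with $\dist(\bm{x}^k,\bm{x}^{\natural})\le\norm{\bm{x}_{\text{DR}}^k-\alpha_k\bm{x}^{\natural}}\le\norm{\tilde{\bm{v}}^k}$ delivers the claimed bound $(\gamma+o(1))^{k-1}\norm{\tilde{\bm{v}}^1}$, the $o(1)$ absorbing $\epsilon_1$ and the higher-order remainder, both of which shrink with $\norm{\tilde{\bm{v}}^1}$. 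The genuine obstacle, already flagged in the text, is that the fixed-point set of RGPS is not known to be exhausted by $\mathcal{F}_0$: the entire argument is conditional on the hypothesis that $\mathcal{F}_0$ is the only fixed-point set in the basin considered, and turning that hypothesis into a verifiable condition on $\bm{A}$ is the hard part, which I would leave open here.
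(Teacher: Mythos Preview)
Your proposal is correct and follows the same route as the paper, which simply says ``by the same argument as before for GPS'' after establishing the contraction $\norm{J_0^t(\tilde{\bm{v}}^k)}\le\gamma\norm{\tilde{\bm{v}}^k}$ via the preceding Lemma. One small simplification: because the anchor in $\mathcal{F}_0$ is just $\alpha_k\bm{z}^{\natural}$ with no $\bm{\theta}_k$-component, $\bm{B}^*\bm{B}$ is actually independent of $\alpha_k$, so the $\epsilon_1$ mechanism for $\norm{J_k^t-J_0^t}$ is unnecessary here and only the higher-order remainder needs to be controlled.
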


The estimation of reconstruction of RGPS in noisy case can also be derived. Note that the requirement of small noise level is minimized. In this view, there is no doubt that it outperforms GPS for noisy measurements.

\begin{remark}
  We note that the dynamic range of $t$ is generally narrow. That is to say the iteration of RGPS is not far away from GPS, where $t=0$. As shown by our numerical experiments, although a slight perturbation of GPS, RGPS stabilizes the iterations, in particular around the solution.
\end{remark}

We conclude that all of GPS, RGPS, DR and RDR exhibit local convergence when they are applied to phase retrieval with $g(\bm{x})=0$. The local convergence of DR and RDR can be derived by mimicking the above analysis. In general, GPS (DR) seems to converge to the contraction basin near the solution easier and faster due to exact graph projection. However, near the solution, GPS (DR) tends to oscillate more than its robust version, RGPS (RDR). This is because the fixed point set for GPS (DR) $\mathcal{F}$ may not be simple, i.e., more than $\bm{z}^{\natural}$ (up to a global phase), especially when the number of measurement is large compared to the dimension of the signal. In this case, moreover, $\mathcal{F}$ connect all the way continuously to $\bm{z}^{\natural}$. Compared to DR, GPS tends to escape stagnation stage easier due to the graph projection in $(\bm{x},\bm{y})$ space.

\section{Technical Details for Proof}

We provide the main technical detail used in previous proofs in this section. The key is to show the eigen structure of the isometric matrix $\bm{B}$. The local convergence analysis for specific Douglas-Rachford algorithms for Fourier phase retrieval problem can be found in~\cite{Li162,Chen2015}. However, their convergence results require $m\geq 2n$ while our analysis is valid for arbitrary $m$ and $n$. Here we also prove convergence of the newly proposed robust version.

First, we study the singular values associated with the matrix $\bm{B}$. Note that matrix $\bm{B}^*$ is isometric, then $\norm{\bm{B}^*\bm{x}}=\norm{\bm{x}}$, i.e, $\bm{B}\bm{B}^*=\bm{I}$. Note that $\bm{B}\in\mathbb{C}^{n\times (n+m)}$ (here $m\geq n$, since we have no additional information on $\bm{x}$),\footnote{We can also show the local convergence for $m\leq n$ case using slightly modified analysis.} we write down the real form of complex matrix $\bm{B}$:
\begin{equation*}
  \mathcal{B}:=
  \begin{pmatrix}
    \re(\bm{B})\\\im(\bm{B})
  \end{pmatrix}\in\mathbb{R}^{2n\times (n+m)}.
\end{equation*}
Accordingly, we define the operator $G$, which maps a complex vector to its real and imaginary parts, i.e.,
\begin{equation*}
  G(\bm{z}):=
  \begin{pmatrix}
    \re(\bm{z})\\\im(\bm{z})
  \end{pmatrix}.
\end{equation*}

\begin{lem}
  By the definition of $G$, we have the following relation
\begin{equation}
\label{eq:real-comp}
  G(\bm{B}^*\bm{u})=
  \begin{pmatrix}
    \mathcal{B}^TG(\bm{u})\\\mathcal{B}^TG(-i\bm{u})
  \end{pmatrix}, \text{ where }\bm{u}\in\mathbb{C}^n.
\end{equation}
\end{lem}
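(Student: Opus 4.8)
The plan is to reduce the identity to a direct computation in terms of real and imaginary parts, exploiting only the definitions of $\bm{B}^*$, of $\mathcal{B}$, and of the operator $G$. First I would write $\bm{B}=\re(\bm{B})+i\,\im(\bm{B})$ with real matrices $\re(\bm{B}),\im(\bm{B})\in\mathbb{R}^{n\times(n+m)}$, so that the Hermitian adjoint is $\bm{B}^*=\re(\bm{B})^{T}-i\,\im(\bm{B})^{T}$, and decompose the argument as $\bm{u}=\re(\bm{u})+i\,\im(\bm{u})$. Multiplying out and separating real from imaginary parts gives
\[
\bm{B}^{*}\bm{u}=\left(\re(\bm{B})^{T}\re(\bm{u})+\im(\bm{B})^{T}\im(\bm{u})\right)+i\left(\re(\bm{B})^{T}\im(\bm{u})-\im(\bm{B})^{T}\re(\bm{u})\right),
\]
which identifies $\re(\bm{B}^{*}\bm{u})$ and $\im(\bm{B}^{*}\bm{u})$ explicitly.

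Next I would observe that, by the block definition $\mathcal{B}=\begin{pmatrix}\re(\bm{B})\\\im(\bm{B})\end{pmatrix}$, the transpose is the block row $\mathcal{B}^{T}=\begin{pmatrix}\re(\bm{B})^{T}&\im(\bm{B})^{T}\end{pmatrix}$, so that for any complex vector $\bm{v}\in\mathbb{C}^{n}$ one has $\mathcal{B}^{T}G(\bm{v})=\re(\bm{B})^{T}\re(\bm{v})+\im(\bm{B})^{T}\im(\bm{v})$. Applying this with $\bm{v}=\bm{u}$ reproduces exactly the real part computed above, i.e. $\mathcal{B}^{T}G(\bm{u})=\re(\bm{B}^{*}\bm{u})$. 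Applying it with $\bm{v}=-i\bm{u}$, whose real part is $\im(\bm{u})$ and whose imaginary part is $-\re(\bm{u})$, i.e. $G(-i\bm{u})=\begin{pmatrix}\im(\bm{u})\\-\re(\bm{u})\end{pmatrix}$, reproduces the imaginary part, i.e. $\mathcal{B}^{T}G(-i\bm{u})=\im(\bm{B}^{*}\bm{u})$. Stacking the two blocks then yields $G(\bm{B}^{*}\bm{u})=\begin{pmatrix}\re(\bm{B}^{*}\bm{u})\\\im(\bm{B}^{*}\bm{u})\end{pmatrix}=\begin{pmatrix}\mathcal{B}^{T}G(\bm{u})\\\mathcal{B}^{T}G(-i\bm{u})\end{pmatrix}$, which is the claim.

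There is no genuine obstacle here: the identity is purely formal in the $\mathbb{C}$–$\mathbb{R}$ correspondence and uses no structural property of $\bm{B}$ (in particular not its isometry). The only point requiring care is the sign/transpose bookkeeping: the Hermitian adjoint contributes a minus sign to the imaginary block of $\bm{B}^{*}$, whereas $\mathcal{B}$ is built from $\bm{B}$ itself rather than from $\bm{B}^{*}$, and the $-i$ twist in the second block on the right-hand side is precisely what reconciles these two conventions. Once the two multiplications $\mathcal{B}^{T}G(\bm{u})$ and $\mathcal{B}^{T}G(-i\bm{u})$ are carried out and matched against $\re(\bm{B}^{*}\bm{u})$ and $\im(\bm{B}^{*}\bm{u})$, the proof is complete.
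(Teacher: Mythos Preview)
Your proof is correct and follows essentially the same approach as the paper: expand $\bm{B}^*\bm{u}$ into real and imaginary parts via $\bm{B}^*=\re(\bm{B})^T-i\,\im(\bm{B})^T$, then recognize the two blocks as $\mathcal{B}^TG(\bm{u})$ and $\mathcal{B}^TG(-i\bm{u})$. If anything, you are slightly more explicit than the paper in matching the computed blocks against the right-hand side.
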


\begin{proof}
  By straightforward calculation, we have 
  \begin{align*}
    G(\bm{B}^*\bm{u}) &= G\left((\re(\bm{B}^T)-i\im(\bm{B}^T))(\re(\bm{u})+i\im(\bm{u}))\right)\\
&=G\left(\left(\re(\bm{B}^T)\re(\bm{u})+\im(\bm{B}^T)\im(\bm{u})\right)+i\left(\re(\bm{B}^T)\im(\bm{u})-\im(\bm{B}^T)\re(\bm{u})\right)\right)\\
&=
    \begin{pmatrix}
      \re(\bm{B}^T)\re(\bm{u})+\im(\bm{B}^T)\im(\bm{u})\\
      \re(\bm{B}^T)\im(\bm{u})-\im(\bm{B}^T)\re(\bm{u})
    \end{pmatrix}.
  \end{align*}
\end{proof}

We assume that $\sigma_1\geq \sigma_2\geq\cdots\geq\sigma_{2n}\geq\sigma_{2n+1}=\cdots=\sigma_{n+m}=0$ ($m\geq n$) are the singular values of $\mathcal{B}$, and $\{\bm{\eta}_j\in\mathbb{R}^{n+m}\}_{j=1}^{n+m}$ and $\{\bm{\xi}_j\in\mathbb{R}^{2n}\}_{j=1}^{2n}$ are its associated right singular vectors and left singular vectors respectively. By the definition, for $j=1,\ldots,2n$, we have
\begin{align*}
  \mathcal{B}\bm{\eta}_j &= \sigma_j\bm{\xi}_j,\\
  \mathcal{B}^T\bm{\xi}_j& = \sigma_j\bm{\eta}_j.
\end{align*}
Accordingly, by denoting $\bm{\xi}_j =
  \begin{pmatrix}
    \bm{\xi}_j^R\\\bm{\xi}_j^I
  \end{pmatrix}$, we have
\begin{align*}
  \begin{pmatrix}
    \re(\bm{B})\bm{\eta}_j\\\im(\bm{B})\bm{\eta}_j
  \end{pmatrix}&=
                 \begin{pmatrix}
                   \sigma_j\bm{\xi}_j^R\\\sigma_j\bm{\xi}_j^I
                 \end{pmatrix},\\
  \begin{pmatrix}
    \re(\bm{B}^T) & \im(\bm{B}^T)
  \end{pmatrix}
                    \begin{pmatrix}
                      \bm{\xi}_j^R\\\bm{\xi}_j^I
                    \end{pmatrix}&=\sigma_j\bm{\eta}_j.
\end{align*}
Therefore, we have 
\begin{align*}
  \bm{B}\bm{\eta}_j & = \sigma_jG^{-1}(\bm{\xi}_j),\\
  \re(\bm{B}^T)\bm{\xi}_j^R+\im(\bm{B}^T)\bm{\xi}_j^I &= \sigma_j\bm{\eta}_j.
\end{align*}
The last equality can be written as
\begin{equation*}
  \re(\bm{B}^*G^{-1}(\bm{\xi}_j)) = \sigma_j\bm{\eta}_j.
\end{equation*}

By the isometry of $\bm{B}^*$, we can directly obtain the singular values $\sigma_1=1$ and $\sigma_{2n}=0$. Moreover, the following properties hold. 
\begin{theorem}[\!\!\cite{Chen2015}]
  For $i=1,\ldots,n$, the singular values $\{\sigma_i\}_{i=1}^{2n}$ and singular vectors $\{\bm{\xi}_i\}_{i=1}^{2n}$ have the following relations
  \begin{align*}
    1&=\sigma_i^2+\sigma_{2n+1-i}^2\\
    \bm{\xi}_{2n+1-i} &= G(-iG^{-1}(\bm{\xi}_i))\\
    \bm{\xi}_{i} & = G(iG^{-1}(\bm{\xi}_{2n+1-i})).
  \end{align*}
\end{theorem}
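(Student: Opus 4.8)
The plan is to reduce all three relations to a single spectral identity for the real symmetric positive semidefinite matrix $\bm P:=\mathcal{B}\mathcal{B}^T\in\mathbb{R}^{2n\times 2n}$, whose eigenpairs are exactly $\{(\sigma_j^2,\bm\xi_j)\}_{j=1}^{2n}$. Let $\bm J\in\mathbb{R}^{2n\times 2n}$ be the orthogonal matrix representing multiplication by $i$ in the coordinates of $G$, i.e. $\bm JG(\bm z)=G(i\bm z)$ for all $\bm z\in\mathbb{C}^n$; concretely $\bm J=\left(\begin{smallmatrix}\bm 0 & -\bm I\\ \bm I & \bm 0\end{smallmatrix}\right)$, so $\bm J^T=\bm J^{-1}=-\bm J$ and $\bm J^TG(\bm z)=G(-i\bm z)$. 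The key claim is
\begin{equation*}
  \bm P+\bm J\bm P\bm J^T=\bm I_{2n}.
\end{equation*}

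To prove this identity I would combine two elementary facts. Reading off the first block of the relation~\eqref{eq:real-comp} gives $\mathcal{B}^TG(\bm u)=\re(\bm B^*\bm u)$ for every $\bm u\in\mathbb{C}^n$, and from the block form of $\mathcal{B}$ one has $\mathcal{B}\bm v=G(\bm B\bm v)$ for every real $\bm v\in\mathbb{R}^{n+m}$. Hence $\mathcal{B}\mathcal{B}^TG(\bm u)=G\bigl(\bm B\,\re(\bm B^*\bm u)\bigr)$, and since $\bm B\bm B^*=\bm I$ (isometry of $\bm B^*$) and $\bm B\,\overline{\bm B^*}=\bm B\bm B^T$, this equals $\tfrac12 G(\bm u)+\tfrac12 G(\bm B\bm B^T\overline{\bm u})$. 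Applying the same computation with $-i\bm u$ in place of $\bm u$ (which inserts a factor $i$ into the second term, because $\overline{-i\bm u}=i\overline{\bm u}$) and then left-multiplying by $\bm J$, one gets $\bm J\mathcal{B}\mathcal{B}^T\bm J^TG(\bm u)=\tfrac12 G(\bm u)-\tfrac12 G(\bm B\bm B^T\overline{\bm u})$; the two cross terms cancel upon addition, and since $G$ is a bijection the identity follows. (Equivalently, $\mathcal{B}$ and $\bm J\mathcal{B}$ are the two block columns of the standard real representation $\widehat{\bm B}$ of $\bm B$, and the identity is just $\widehat{\bm B}\widehat{\bm B}^T=\widehat{\bm B\bm B^*}=\bm I_{2n}$.)

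With the identity in hand the rest is linear algebra. Writing it as $\bm J\bm P\bm J^T=\bm I-\bm P$ and using orthogonality of $\bm J$, the matrices $\bm P$ and $\bm I-\bm P$ are orthogonally similar via $\bm J$, so the spectrum of $\bm P$ is invariant under $\lambda\mapsto 1-\lambda$; as this is an order-reversing involution of the eigenvalue multiset, the $i$-th largest eigenvalue is paired with the $i$-th smallest, which is precisely $\sigma_i^2+\sigma_{2n+1-i}^2=1$ (consistent with the already noted $\sigma_1=1$, $\sigma_{2n}=0$). For the singular vectors, applying $\bm J^T$ to $\bm J\bm P\bm J^T\bm\xi_i=(1-\sigma_i^2)\bm\xi_i$ yields $\bm P(\bm J^T\bm\xi_i)=\sigma_{2n+1-i}^2(\bm J^T\bm\xi_i)$, so the unit vector $\bm J^T\bm\xi_i$ is an eigenvector of $\bm P$ for the eigenvalue $\sigma_{2n+1-i}^2$; choosing the orthonormal basis of left singular vectors compatibly (forced when the eigenvalue is simple, and otherwise arranged by a rotation inside the eigenspace) gives $\bm\xi_{2n+1-i}=\bm J^T\bm\xi_i=G(-iG^{-1}(\bm\xi_i))$, and applying $\bm J$ to this equality together with $\bm J\bm J^T=\bm I$ gives the third relation $\bm\xi_i=\bm J\bm\xi_{2n+1-i}=G(iG^{-1}(\bm\xi_{2n+1-i}))$.

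The main obstacle is the identity $\bm P+\bm J\bm P\bm J^T=\bm I$: the bookkeeping between $G$, $\mathcal{B}$, and the complex operations $\bm u\mapsto i\bm u$ and $\bm u\mapsto\overline{\bm u}$ must be done carefully so that the $\bm B\bm B^T$ cross term genuinely cancels. A secondary point is that the stated eigenvector formulas presuppose a consistent choice of singular vectors within each (possibly multidimensional) eigenspace, so one should either assume the spectrum of $\bm P$ is simple or explain how to rotate the basis so that the $\bm J$-pairing holds exactly.
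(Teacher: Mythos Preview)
The paper does not supply its own proof of this theorem: it is quoted verbatim from~\cite{Chen2015} and used as a black box in Section~4. So there is no in-paper argument to compare against.

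Your proposal is correct and constitutes a clean, self-contained proof. The spectral identity $\bm P+\bm J\bm P\bm J^T=\bm I_{2n}$ with $\bm P=\mathcal{B}\mathcal{B}^T$ follows directly from $\bm B\bm B^*=\bm I$ (indeed, writing $\bm B\bm B^*=\bm I$ in real and imaginary parts gives $\re(\bm B)\re(\bm B)^T+\im(\bm B)\im(\bm B)^T=\bm I$ and $\im(\bm B)\re(\bm B)^T-\re(\bm B)\im(\bm B)^T=\bm 0$, which are exactly the block entries of $\bm P+\bm J\bm P\bm J^T$), and your functional derivation via~\eqref{eq:real-comp} reaches the same conclusion. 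From $\bm J\bm P\bm J^T=\bm I-\bm P$ the eigenvalue pairing $\sigma_i^2+\sigma_{2n+1-i}^2=1$ and the eigenvector relations $\bm\xi_{2n+1-i}=\bm J^T\bm\xi_i$, $\bm\xi_i=\bm J\bm\xi_{2n+1-i}$ follow immediately. Your caveat about choosing singular vectors consistently in the presence of repeated eigenvalues is well taken and is the only place the statement needs a convention; under Assumption~\ref{assu:1} one expects $\sigma_2<1$ strictly, but the full spectrum is not asserted to be simple, so the phrasing ``arranged by a rotation inside the eigenspace'' is the right disclaimer.
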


By the eigen structure $\{\sigma_1,\ldots,\sigma_{2n}\}$ of the matrix $\mathcal{B}$, we have the following relations.
\begin{lem}[\!\!\cite{Chen2015}]
  \label{lem:oper}
  For each $k=1,\ldots,n$, the following matrix-vector multiplication can be expressed as
  \begin{align*}
    \bm{B}^*\bm{B}\bm{\eta}_k &= \sigma_k\left(\sigma_k\bm{\eta}_k+i\sigma_{2n+1-k}\bm{\eta}_{2n+1-k}\right)\\
    \bm{B}^*\bm{B}\bm{\eta}_{2n+1-k} &= \sigma_{2n+1-k}\left(\sigma_{2n+1-k}\bm{\eta}_{2n+1-k}-i\sigma_{k}\bm{\eta}_{k}\right).
  \end{align*}
\end{lem}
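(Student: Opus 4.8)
The plan is to reduce everything to the scalar identity $\re(\bm{B}^*G^{-1}(\bm{\xi}_j)) = \sigma_j\bm{\eta}_j$ already recorded above, together with the two symmetry relations $\bm{\xi}_{2n+1-k} = G(-iG^{-1}(\bm{\xi}_k))$ and $\bm{\xi}_k = G(iG^{-1}(\bm{\xi}_{2n+1-k}))$ supplied by the cited theorem. First I would use $\bm{B}\bm{\eta}_k = \sigma_k G^{-1}(\bm{\xi}_k)$, so that $\bm{B}^*\bm{B}\bm{\eta}_k = \sigma_k\,\bm{B}^*G^{-1}(\bm{\xi}_k)$; the whole lemma then amounts to evaluating the complex vector $\bm{B}^*G^{-1}(\bm{\xi}_k)\in\mathbb{C}^{n+m}$ and splitting it into real and imaginary parts.

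For the real part, the recorded relation gives immediately $\re(\bm{B}^*G^{-1}(\bm{\xi}_k)) = \sigma_k\bm{\eta}_k$. For the imaginary part, I would use that $\bm{B}^*$ is $\mathbb{C}$-linear and that $\im(\bm{z}) = \re(-i\bm{z})$ for any $\bm{z}$, hence $\im(\bm{B}^*G^{-1}(\bm{\xi}_k)) = \re\!\big(\bm{B}^*(-iG^{-1}(\bm{\xi}_k))\big)$; now $-iG^{-1}(\bm{\xi}_k) = G^{-1}(\bm{\xi}_{2n+1-k})$ by the first symmetry relation, so the imaginary part equals $\re(\bm{B}^*G^{-1}(\bm{\xi}_{2n+1-k})) = \sigma_{2n+1-k}\bm{\eta}_{2n+1-k}$. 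Combining, $\bm{B}^*G^{-1}(\bm{\xi}_k) = \sigma_k\bm{\eta}_k + i\sigma_{2n+1-k}\bm{\eta}_{2n+1-k}$, which yields the first displayed formula after multiplying by $\sigma_k$. For the second formula I would run the identical argument starting from $\bm{\eta}_{2n+1-k}$: the real part is $\sigma_{2n+1-k}\bm{\eta}_{2n+1-k}$, and for the imaginary part the companion relation gives $-iG^{-1}(\bm{\xi}_{2n+1-k}) = -G^{-1}(\bm{\xi}_k) = G^{-1}(-\bm{\xi}_k)$, so by linearity of the real-part map it equals $-\sigma_k\bm{\eta}_k$, producing the minus sign.

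The main obstacle — essentially the only point requiring care — is bookkeeping of the $\mathbb{C}$–$\mathbb{R}$ correspondence: one must be consistent about how $G$, $G^{-1}$, and multiplication by $\pm i$ interact, and must remember that the complex matrix $\bm{B}^*$ commutes with scalar multiplication by $i$ whereas the real-form matrix $\mathcal{B}$ does not. Once the conventions are pinned down, both sign choices in the statement fall out automatically from the two symmetry relations, and no further estimate (in particular not $1 = \sigma_i^2+\sigma_{2n+1-i}^2$) is needed here.
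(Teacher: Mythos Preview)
Your proposal is correct. The paper does not actually supply its own proof of this lemma: it is stated with a citation to~\cite{Chen2015} and left unproved. Your argument is exactly the natural one given the ingredients already recorded in the paper just before the lemma --- namely $\bm{B}\bm{\eta}_j = \sigma_j G^{-1}(\bm{\xi}_j)$, $\re(\bm{B}^*G^{-1}(\bm{\xi}_j)) = \sigma_j\bm{\eta}_j$, and the pairing $\bm{\xi}_{2n+1-k} = G(-iG^{-1}(\bm{\xi}_k))$ --- and the sign bookkeeping you describe (via $\im(\bm{z})=\re(-i\bm{z})$ and the $\mathbb{C}$-linearity of $\bm{B}^*$) is handled correctly in both displayed formulas.
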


In the next, we give the leading singular vector of $\mathcal{B}$ explicitly.

\begin{lem}
For the real matrix $\mathcal{B}\in\mathbb{R}^{2n\times(n+m)}$ generated by $\bm{B}=\bm{C}\bm{\Omega}_k$ with $\bm{\Omega}_k$ being $\diag
\begin{pmatrix}
  \frac{\alpha_k\bm{x}^{\natural}}{\lvert \bm{x}^{\natural}\rvert}\\
  \frac{\alpha_k\bm{A}^*\bm{x}^{\natural}}{\lvert \bm{A}^*\bm{x}^{\natural}\rvert}
\end{pmatrix}
$, the left singular vector corresponding to the leading singular value $\sigma_1=1$ is $\bm{\xi}_1=G(\bm{L}^*\alpha_k\bm{x}^{\natural})$ and the left singular vector corresponding to the least singular value $\sigma_{2n}=0$ is $\bm{\xi}_{2n}=G(-i\bm{L}^*\alpha_k\bm{x}^{\natural})$.
\end{lem}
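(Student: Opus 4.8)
The plan is to \emph{exhibit}, for each of the two claimed vectors, an explicit singular pair $(\bm{\xi},\bm{\eta})$ of $\mathcal{B}$ satisfying $\mathcal{B}\bm{\eta}=\sigma\bm{\xi}$ and $\mathcal{B}^T\bm{\xi}=\sigma\bm{\eta}$, and then identify $\sigma$ as the extreme (largest, resp. smallest) singular value. I would first collect the facts to be used: $\bm{C}^*=\left(\begin{smallmatrix}(\bm{L}^{-1})^*\\ \bm{A}^*(\bm{L}^{-1})^*\end{smallmatrix}\right)$ is isometric, $\bm{L}\bm{L}^*=\bm{I}+\bm{A}\bm{A}^*$, and since $\bm{\Omega}_k$ is diagonal unitary, $\bm{B}^*=\bm{\Omega}_k^*\bm{C}^*$ is isometric as well, so $\|\mathcal{B}\|=\|\mathcal{B}^T\|\le 1$ and therefore $\sigma_1\le 1$. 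The two relations above translate, via the identities already established, into: $(\bm{\xi},\bm{\eta})$ real is a singular pair of $\mathcal{B}$ at value $\sigma$ iff $\bm{B}\bm{\eta}=\sigma\,G^{-1}(\bm{\xi})$ and $\re\!\big(\bm{B}^*G^{-1}(\bm{\xi})\big)=\sigma\bm{\eta}$.

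For the leading vector I would set $\bm{u}:=\bm{L}^*\alpha_k\bm{x}^{\natural}$ and $\bm{\xi}_1:=G(\bm{u})$, and compute $\bm{B}^*\bm{u}$ directly. Using $(\bm{L}^{-1})^*\bm{L}^*=\bm{I}$ one gets $\bm{C}^*\bm{u}=\alpha_k\big(\bm{x}^{\natural};\,\bm{A}^*\bm{x}^{\natural}\big)=\alpha_k\bm{z}^{\natural}$, and then, using $|\alpha_k|=1$ and $|\bm{A}^*\bm{x}^{\natural}|=\bm{b}$, the elementwise cancellation gives $\bm{B}^*\bm{u}=\bm{\Omega}_k^*\alpha_k\bm{z}^{\natural}=\big(|\bm{x}^{\natural}|;\,\bm{b}\big)=:\bm{\eta}_1$, which is a \emph{real} vector. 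Hence $\re(\bm{B}^*G^{-1}(\bm{\xi}_1))=\bm{B}^*\bm{u}=\bm{\eta}_1$, and $\|\bm{\eta}_1\|=\|\bm{B}^*\bm{u}\|=\|\bm{u}\|=\|\bm{\xi}_1\|$ by the isometry of $\bm{B}^*$. It remains to verify $\bm{B}\bm{\eta}_1=G^{-1}(\bm{\xi}_1)=\bm{u}$: here $\bm{\Omega}_k\bm{\eta}_1=\alpha_k\bm{z}^{\natural}$ and $\bm{C}\,\alpha_k\bm{z}^{\natural}=\alpha_k\bm{L}^{-1}(\bm{I}+\bm{A}\bm{A}^*)\bm{x}^{\natural}=\alpha_k\bm{L}^*\bm{x}^{\natural}=\bm{u}$. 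Thus $(\bm{\xi}_1,\bm{\eta}_1)$ is a singular pair at value $1$; since $\sigma_1\le 1$ this value is maximal, so $\bm{\xi}_1=G(\bm{L}^*\alpha_k\bm{x}^{\natural})$ is (a scalar multiple of) the leading left singular vector, with associated right singular vector the magnitude vector $\big(|\bm{x}^{\natural}|;\,\bm{b}\big)$.

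For the trailing vector, the shortest route is to apply the structure relation $\bm{\xi}_{2n}=G\!\big(-iG^{-1}(\bm{\xi}_1)\big)$ from the quoted theorem of~\cite{Chen2015}, which immediately yields $\bm{\xi}_{2n}=G(-i\bm{L}^*\alpha_k\bm{x}^{\natural})$; equivalently, and by essentially the same computation, one checks directly that $\bm{B}^*\big(G^{-1}(\bm{\xi}_{2n})\big)=\bm{B}^*(-i\bm{u})=-i\bm{\eta}_1$ is purely imaginary, so $\re(\bm{B}^*G^{-1}(\bm{\xi}_{2n}))=0$, i.e. $\mathcal{B}^T\bm{\xi}_{2n}=0$ with $\bm{\xi}_{2n}\neq 0$; combined with $1=\sigma_1^2+\sigma_{2n}^2$ this shows it is the left singular vector at the least value $\sigma_{2n}=0$.

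I expect the only genuinely delicate point to be the word ``the'' in the statement: the computations above produce \emph{one} valid singular pair at each of the values $1$ and $0$, and to call it \emph{the} leading/trailing vector one must know that $\sigma_1=1$ (equivalently $\sigma_{2n}=0$) is a \emph{simple} singular value — a fact that is also implicitly invoked in Section~\ref{subsec:GPS} when writing $0<\sigma_2<1$. I would justify simplicity from Assumption~\ref{assu:1}(3): any second $\bm{\xi}$ with $\mathcal{B}\mathcal{B}^T\bm{\xi}=\bm{\xi}$ forces $\im(\bm{B}^*G^{-1}(\bm{\xi}))\in\ker\bm{B}$, and pulling this back through $\bm{C}$ and $\bm{\Omega}_k$ produces a solution of the magnitude-retrieval system that is not a real multiple of $\bm{x}^{\natural}$, contradicting the assumed uniqueness. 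Everything else is bookkeeping with $\bm{L}\bm{L}^*=\bm{I}+\bm{A}\bm{A}^*$, $|\alpha_k|=1$, and $|\bm{A}^*\bm{x}^{\natural}|=\bm{b}$.
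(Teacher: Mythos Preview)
Your proposal is correct and follows essentially the same approach as the paper: a direct computation of $\bm{B}^*\bm{L}^*\alpha_k\bm{x}^{\natural}=(\lvert\bm{x}^{\natural}\rvert;\lvert\bm{A}^*\bm{x}^{\natural}\rvert)$ and its reverse to exhibit the singular pair at value $1$, then the observation that $\bm{B}^*(-i\bm{u})$ is purely imaginary (equivalently, the paper's use of $G(\bm{B}^*\bm{u})=(\mathcal{B}^TG(\bm{u});\mathcal{B}^TG(-i\bm{u}))$) to get $\mathcal{B}^T\bm{\xi}_{2n}=0$. Your discussion of simplicity via Assumption~\ref{assu:1}(3) is additional---the paper does not address it in this lemma's proof but handles $\sigma_2<1$ separately in the final lemma of Section~4.
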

\begin{proof}
First we have
  \begin{equation*}
    \bm{B}^*\bm{L}^*\alpha_k\bm{x}^{\natural} =
    \begin{pmatrix}
      \diag\left(\frac{\overline{\alpha_k\bm{x}^{\natural}}}{\lvert \bm{x}^{\natural}\rvert}\right) & \\ & \diag\left(\frac{\overline{\alpha_k\bm{A}^*\bm{x}^{\natural}}}{\lvert \bm{A}^*\bm{x}^{\natural}\rvert}\right)
    \end{pmatrix}
    \begin{pmatrix}
      (\bm{L}^{-1})^*\\\bm{A}^* (\bm{L}^{-1})^*
    \end{pmatrix}\bm{L}^*\alpha_k\bm{x}^{\natural}=
    \begin{pmatrix}
      \lvert \bm{x}^{\natural}\rvert\\\lvert \bm{A}^*\bm{x}^{\natural}\rvert
    \end{pmatrix},
  \end{equation*}
which says
\begin{equation*}
  \re(\bm{B}^*\bm{L}^*\alpha_k\bm{x}^{\natural})= \begin{pmatrix}
      \lvert \bm{x}^{\natural}\rvert\\\lvert \bm{A}^*\bm{x}^{\natural}\rvert
    \end{pmatrix}.
\end{equation*}
Similarly, it is easy to show that $B\begin{pmatrix}
      \lvert \bm{x}^{\natural}\rvert\\\lvert \bm{A}^*\bm{x}^{\natural}\rvert
    \end{pmatrix}=\bm{L}^*\alpha_k\bm{x}^{\natural}$.
Hence the leading singular value $\sigma_1=1$ and its associated left and right singular vectors are  $\bm{\xi}_1 = G(\bm{L}^*\alpha_k\bm{x}^{\natural})\in\mathbb{R}^{2n}$ and $\bm{\eta}_1=\begin{pmatrix}
      \lvert \bm{x}^{\natural}\rvert\\\lvert \bm{A}^*\bm{x}^{\natural}\rvert
    \end{pmatrix}\in\mathbb{R}^{n+m}$ respectively.  Furthermore, by the fact
  \begin{equation*}
    G(\bm{B}^*\bm{u})=
    \begin{pmatrix}
      \mathcal{B}^TG(\bm{u})\\
      \mathcal{B}^TG(-i\bm{u})
    \end{pmatrix}
\quad \mbox{and} \quad 
  \mathcal{B}^TG(-i\bm{L}^*\alpha_k\bm{x}^{\natural}) = 0,
\end{equation*}
we have $\sigma_{2n}=0$ and $\bm{\xi}_{2n}=G(-i\bm{L}^*\alpha_k\bm{x}^{\natural})$.
\end{proof}

\begin{lem}
Both $\bm{v}^k$ and $\tilde{\bm{v}}^k$, defined in Section \ref{subsec:GPS} and \ref{subsec:RGPS} respectively, provided both $\norm{\bm{v}^k}$ and $\norm{\tilde{\bm{v}}^k}$ are small enough, then they satisfy 
  $\im(\bm{v}^k)\perp\bm{\eta}_1,\im(\tilde{\bm{v}}^k)\perp\bm{\eta}_1$.
\end{lem}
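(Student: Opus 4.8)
The plan is to reduce both real orthogonality relations to a first-order optimality condition for the phase $\alpha_k$. Since $\bm\eta_1=\bigl(|\bm x^{\natural}|^{\top},\ |\bm A^{*}\bm x^{\natural}|^{\top}\bigr)^{\top}$ is a real vector, for every $\bm w\in\mathbb C^{n+m}$ one has $\bm\eta_1^{\top}\im(\bm w)=\im\langle\bm\eta_1,\bm w\rangle$, so it suffices to show $\im\langle\bm\eta_1,\bm v^k\rangle=0$ and $\im\langle\bm\eta_1,\tilde{\bm v}^k\rangle=0$. Using $\bm v^k=\bm\Omega^{*}(\bm z_{\text{DR}}^k-\bm z_*^k)$, $\tilde{\bm v}^k=\bm\Omega^{*}(\bm z_{\text{DR}}^k-\alpha_k\bm z^{\natural})$ and the identity $\langle\bm\eta_1,\bm\Omega^{*}\bm u\rangle=\langle\bm\Omega\bm\eta_1,\bm u\rangle$, the crux is the elementary fact $\bm\Omega\bm\eta_1=\alpha_k\bm z^{\natural}$. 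This is immediate for RGPS, where $\bm\Omega=\diag\bigl(\tfrac{\alpha_k\bm x^{\natural}}{|\bm x^{\natural}|},\tfrac{\alpha_k\bm y^{\natural}}{|\bm y^{\natural}|}\bigr)$. For GPS the diagonal of $\bm\Omega_y$ is $\tfrac{\alpha_k\bm y_*^k}{|\bm y_*^k|}$, but since $\bm y_*^k=(\bm b+\bm\theta_k)\circ\tfrac{\bm y^{\natural}}{|\bm y^{\natural}|}$ with $\bm b+\bm\theta_k\ge 0$ (valid once $\bm z_{\text{DR}}^k\in\mathcal V_k$, which in turn is guaranteed by the smallness of $\|\bm v^k\|$ via the bound $\|\bm\theta_k\|\le b_{\min}$ in the proof of Lemma~\ref{lem:orth2}), the phases of $\bm y_*^k$ and $\bm y^{\natural}$ coincide entrywise, so $\bm\Omega$ is exactly the same matrix. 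Hence the two claims become $\im\langle\alpha_k\bm z^{\natural},\bm z_{\text{DR}}^k-\bm z_*^k\rangle=0$ (for GPS) and $\im\langle\alpha_k\bm z^{\natural},\bm z_{\text{DR}}^k-\alpha_k\bm z^{\natural}\rangle=0$ (for RGPS).

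For RGPS this finishes the proof: $\alpha_k\bm z^{\natural}=P_{\mathcal F_0}\bm z_{\text{DR}}^k$, the curve $\phi\mapsto e^{i\phi}\alpha_k\bm z^{\natural}$ lies in $\mathcal F_0$ with tangent $i\alpha_k\bm z^{\natural}$ at $\phi=0$, so the first-order condition for the constrained minimum is $\re\langle i\alpha_k\bm z^{\natural},\bm z_{\text{DR}}^k-\alpha_k\bm z^{\natural}\rangle=0$; since $\re\langle i\bm u,\bm r\rangle=\im\langle\bm u,\bm r\rangle$, this is precisely what is needed. Smallness of $\|\tilde{\bm v}^k\|$ enters only to ensure that $P_{\mathcal F_0}$, hence $\alpha_k$, is single-valued near $\bm z^{\natural}$.

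For GPS the residual is measured against $\bm z_*^k$ rather than $\alpha_k\bm z^{\natural}$, so I would split $\alpha_k\bm z^{\natural}=\bm z_*^k-\bigl(\bm z_*^k-\alpha_k\bm z^{\natural}\bigr)$, with $\bm z_*^k-\alpha_k\bm z^{\natural}=\alpha_k\bigl(\bm 0,\ \bm\theta_k\circ\tfrac{\bm y^{\natural}}{|\bm y^{\natural}|}\bigr)$, obtaining
\[
\im\langle\alpha_k\bm z^{\natural},\bm z_{\text{DR}}^k-\bm z_*^k\rangle
=\im\langle\bm z_*^k,\bm z_{\text{DR}}^k-\bm z_*^k\rangle
-\im\Bigl\langle\alpha_k\bigl(\bm 0,\ \bm\theta_k\circ\tfrac{\bm y^{\natural}}{|\bm y^{\natural}|}\bigr),\ \bm z_{\text{DR}}^k-\bm z_*^k\Bigr\rangle .
\]
The first term vanishes by the phase part of the first-order condition for $P_{\mathcal F}$ (optimality of $\alpha_k$ along $e^{i\phi}\bm z_*^k\subset\mathcal F$, the same tangent computation as for RGPS). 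For the second term, only the $\bm y$-block contributes and, using $\tfrac{\overline{y_i^{\natural}}}{|y_i^{\natural}|}=\tfrac{\overline{y_{*,i}^k}}{|y_{*,i}^k|}$, it equals the complex conjugate of $\bigl\langle\tfrac{\overline{\alpha_k\bm y_*^k}}{|\bm y_*^k|}\circ(\bm y_{\text{DR}}^k-\alpha_k\bm y_*^k),\ \bm\theta_k\bigr\rangle$, which is $0$ by Lemma~\ref{lem:orth2} since $\bm\theta_k\in\mathcal C\subset\mathcal C'$.

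The one point that requires care — the main obstacle, such as it is — is the bookkeeping around the smallness hypotheses. One must verify that small $\|\bm v^k\|$ and $\|\tilde{\bm v}^k\|$ together with $\|\bm\theta_k\|\le b_{\min}$ place $\bm z_{\text{DR}}^k$ in $\mathcal V_k$, so that $\bm b+\bm\theta_k\ge 0$ and the identification $\bm\Omega\bm\eta_1=\alpha_k\bm z^{\natural}$ are legitimate; that $P_{\mathcal F}$ and $P_{\mathcal F_0}$ are single-valued there; and that the separate first-order conditions in the variables $\alpha$ and $\bm\theta$ hold at the minimizers (for $\bm\theta$ this is exactly where Lemma~\ref{lem:orth2}'s passage from $\mathcal C$ to $\mathcal C'$, which uses $\bm b+\bm\theta_k\ge0$, is invoked). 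All of this is routine, but it is precisely what the ``small enough'' assumption is there to provide.
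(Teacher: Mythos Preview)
Your proof is correct and follows essentially the same approach as the paper's own proof. The paper computes $\langle\bm v^k,i\bm\eta_1\rangle$ and $\langle\tilde{\bm v}^k,i\bm\eta_1\rangle$ directly, using $\bm\Omega\bm\eta_1=\alpha_k\bm z^{\natural}$ (implicitly, via the phase identification you also use), then invokes the optimality of $\alpha_k$ for the main term and Lemma~\ref{lem:orth2} for the $\bm\theta_k$-piece in the GPS case---exactly your two ingredients, with the only difference being that the paper splits $|\bm y^{\natural}|=|\bm y_*^k|-\bm\theta_k$ in the second slot of the inner product whereas you split $\alpha_k\bm z^{\natural}=\bm z_*^k-(\bm z_*^k-\alpha_k\bm z^{\natural})$ in the first, which is equivalent.
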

\begin{proof}
According to the definition of $\tilde{\bm{v}}^{k}$, we have 
\begin{align*}
  \langle \tilde{\bm{v}}^k,i\bm{\eta}_1\rangle&=\left\langle \frac{\overline{\alpha_k\bm{x}^{\natural}}}{\lvert \bm{x}^{\natural}\rvert}(\bm{x}_{\text{DR}}^k-\alpha_k\bm{x}^{\natural}),i\lvert \bm{x}^{\natural}\rvert\right\rangle + \left\langle \frac{\overline{\alpha_k\bm{y}^{\natural}}}{\lvert \bm{y}^{\natural}\rvert}(\bm{y}_{\text{DR}}^k-\alpha_k\bm{y}^{\natural}),i\lvert \bm{y}^{\natural}\rvert\right\rangle\\
  &=\langle \bm{x}_{\text{DR}}^k-\alpha_k\bm{x}^{\natural},i\alpha_k\bm{x}^{\natural}\rangle + \langle \bm{y}_{\text{DR}}^k-\alpha_k\bm{y}^{\natural},i\alpha_k\bm{y}^{\natural}\rangle.
\end{align*}

Since $\alpha_k$ is determined by the projection, we have $\re\langle \tilde{\bm{v}}^k,i\bm{\eta}_1\rangle=0$, which says $\im(\tilde{\bm{v}}^k)\perp \bm{\eta}_1$. Likewise, 
\begin{align*}
  \langle \bm{v}^k,i\bm{\eta}_1\rangle&=\left\langle \frac{\overline{\alpha_k\bm{x}^{\natural}}}{\lvert \bm{x}^{\natural}\rvert}(\bm{x}_{\text{DR}}^k-\alpha_k\bm{x}^{\natural}),i\lvert \bm{x}^{\natural}\rvert\right\rangle + \left\langle \frac{\overline{\alpha_k\bm{y}^{\natural}}}{\lvert \bm{y}^{\natural}\rvert}(\bm{y}_{\text{DR}}^k-\alpha_k\bm{y}_*^k),i\lvert \bm{y}^{\natural}\rvert\right\rangle\\
  & = \left\langle \frac{\overline{\alpha_k\bm{x}^{\natural}}}{\lvert \bm{x}^{\natural}\rvert}(\bm{x}_{\text{DR}}^k-\alpha_k\bm{x}^{\natural}),i\lvert \bm{x}^{\natural}\rvert\right\rangle + \left\langle \frac{\overline{\alpha_k\bm{y}^{\natural}}}{\lvert \bm{y}^{\natural}\rvert}(\bm{y}_{\text{DR}}^k-\alpha_k\bm{y}_*^k),i\lvert \bm{y}_*^k\rvert-i\bm{\theta}_k\right\rangle\\
  &=\langle \bm{x}_{\text{DR}}^k-\alpha_k\bm{x}^{\natural},i\alpha_k\bm{x}^{\natural}\rangle + \langle \bm{y}_{\text{DR}}^k-\alpha_k\bm{y}_*^k,i\alpha_k\bm{y}_*^k\rangle-\langle \frac{\overline{\alpha_k\bm{y}_*^k}}{\lvert \bm{y}_*^k\rvert}(\bm{y}_{\text{DR}}^k-\alpha_k\bm{y}_*^k),i\bm{\theta}_k\rangle.
\end{align*}
By Lemma~\ref{lem:orth2} and the projection property, we also have $\re\langle \bm{v}^k,i\bm{\eta}_1\rangle=0$, i.e., $\im(\bm{v}^k)\perp \bm{\eta}_1$.
\end{proof}

\begin{proof}[Proof of Lemma 3.4]
  Since we have that
  \begin{equation*}
    \begin{pmatrix}
      \bm{g}\\\bm{h}
    \end{pmatrix}\perp \bm{\eta}_1,
  \end{equation*}
  we know that $\begin{pmatrix}
      i\bm{g}\\i\bm{h}
    \end{pmatrix}\in\text{span}\{i\bm{\eta}_2,\ldots,i\bm{\eta}_{n+m}\}$. For the vector $
    \begin{pmatrix}
      \bm{e}\\\bm{0}
    \end{pmatrix}
$, we can decompose it as $\bm{v}_1+\bm{v}_2$ such that $\bm{v}_1\in \text{span}\{\bm{\eta}_1\}$ and $\bm{v}_2\in\text{span}\{\bm{\eta}_2,\ldots,\bm{\eta}_{n+m}\}$. According to the operation $\bm{B}^*\bm{B}$ on the singular vectors, see Lemma~\ref{lem:oper}, we know that $\bm{B}^*\bm{B}\begin{pmatrix}
      \bm{e}\\\bm{0}
    \end{pmatrix}=\bm{v}_1+\bm{B}^*\bm{B}\bm{v}_2$ where $\bm{B}^*\bm{B}\bm{v}_2\in \text{span}\{\bm{\eta}_j,i\bm{\eta}_j|j=2,\ldots,2n-1\}$. Then
    \begin{align*}
      \norm{\bm{B}^*\bm{B}\begin{pmatrix}
  \bm{e}+i\bm{g}\\i\bm{h}
\end{pmatrix}}^2&=\norm{\bm{v}_1+\bm{B}^*\bm{B}\left(\bm{v}_2+\begin{pmatrix}
  i\bm{g}\\i\bm{h}
\end{pmatrix}\right)}^2\\
&=\norm{\bm{v}_1}^2+\norm{\bm{B}^*\bm{B}\left(\bm{v}_2+\begin{pmatrix}
  i\bm{g}\\i\bm{h}
\end{pmatrix}\right)}^2\\
&\leq \norm{\bm{v}_1}^2+\sigma_2^2(\norm{\bm{v}_2}^2+\norm{\bm{g}}^2+\norm{\bm{h}}^2),
    \end{align*}
where we use the fact $\bm{v}_2+\begin{pmatrix}
  i\bm{g}\\i\bm{h}
\end{pmatrix}\in \text{span}\{\bm{\eta}_j,i\bm{\eta}_j|j=2,\ldots,2n-1\}$.
Since 
\begin{align*}
  \norm{\bm{v}_1}^2+\norm{\bm{B}^*\bm{B}\bm{v}_2}^2&=\norm{\bm{v}_1+\bm{B}^*\bm{B}\bm{v}_2}^2\\
&=\norm{\bm{B}^*\bm{B}\begin{pmatrix}
  \bm{e}\\\bm{0}
\end{pmatrix}}^2\leq \delta_1^2\norm{\bm{e}}^2,
\end{align*}
we have
\begin{align*}
  \norm{\bm{v}_1}^2+\sigma_2^2(\norm{\bm{v}_2}^2+\norm{\bm{g}}^2+\norm{\bm{h}}^2)&= (1-\sigma_2^2)\norm{\bm{v}_1}^2+\sigma_2^2(\norm{\bm{v}_1}^2+\norm{\bm{v}_2}^2+\norm{\bm{g}}^2+\norm{\bm{h}}^2)\\
 &\leq (1-\sigma_2^2)\delta_1^2\norm{\bm{e}}^2+\sigma_2^2(\norm{\bm{e}}^2+\norm{\bm{g}}^2+\norm{\bm{h}}^2).
\end{align*}
This completes the proof.
\end{proof}

The following Lemma verifies that the second singular value $\sigma_2$ is strictly less than unity.

\begin{lem}
  Let $\bm{B}=\bm{C}\bm{\Omega}$, where $\bm{\Omega}=
  \begin{pmatrix}
   \frac{\alpha_k\bm{x}^{\natural}}{\lvert \bm{x}^{\natural}\rvert}  & \\
    & \frac{\alpha_k\bm{A}^*\bm{x}^{\natural}}{\lvert \bm{A}^*\bm{x}^{\natural}\rvert}
  \end{pmatrix}
$ where $\bm{C}=
\begin{pmatrix}
  \bm{L}^{-1} & \bm{L}^{-1}\bm{A}
\end{pmatrix}
\in\mathbb{C}^{n\times (m+n)}$. Then $\norm{\im(\bm{B}^*\bm{x})}=1$ holds for a unit vector $\bm{x}$ if and only if 
\begin{equation*}
  \begin{pmatrix}
    \frac{(\bm{L}^{-1})^*\bm{x}}{\lvert (\bm{L}^{-1})^*\bm{x}\rvert}\\
    \frac{\bm{A}^*(\bm{L}^{-1})^*\bm{x}}{\lvert\bm{A}^*(\bm{L}^{-1})^*\bm{x}\rvert}
  \end{pmatrix}=i\bm{\delta}\circ
  \begin{pmatrix}
    \frac{\bm{x}^{\natural}}{\lvert\bm{x}^{\natural}\rvert}\\
    \frac{\bm{A}^*\bm{x}^{\natural}}{\lvert\bm{A}^*\bm{x}^{\natural}\rvert}
  \end{pmatrix}
\end{equation*}
where the components of $\bm{\delta}$ are either 1 or -1. 
\end{lem}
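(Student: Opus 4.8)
The plan is to convert the scalar identity $\norm{\im(\bm{B}^*\bm{x})}=1$ into a componentwise condition on the phases of $\bm{B}^*\bm{x}$, using that $\bm{B}^*$ is an isometry. Since $\bm{B}\bm{B}^*=\bm{I}$, every unit vector $\bm{x}$ obeys $1=\norm{\bm{B}^*\bm{x}}^2=\norm{\re(\bm{B}^*\bm{x})}^2+\norm{\im(\bm{B}^*\bm{x})}^2$, so $\norm{\im(\bm{B}^*\bm{x})}=1$ holds exactly when $\re(\bm{B}^*\bm{x})=\bm{0}$, i.e.\ when every entry of $\bm{B}^*\bm{x}$ is purely imaginary. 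This reduction is the core of the argument; the rest is bookkeeping of phases.

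Next I would expand $\bm{B}^*\bm{x}=\bm{\Omega}^*\bm{C}^*\bm{x}$ using $\bm{C}^*=\begin{pmatrix}(\bm{L}^{-1})^*\\\bm{A}^*(\bm{L}^{-1})^*\end{pmatrix}$ and the diagonal, unit-modulus $\bm{\Omega}^*$, giving
\begin{equation*}
\bm{B}^*\bm{x}=\begin{pmatrix}\frac{\overline{\alpha_k\bm{x}^{\natural}}}{\lvert\bm{x}^{\natural}\rvert}\circ(\bm{L}^{-1})^*\bm{x}\\[4pt]\frac{\overline{\alpha_k\bm{A}^*\bm{x}^{\natural}}}{\lvert\bm{A}^*\bm{x}^{\natural}\rvert}\circ\bm{A}^*(\bm{L}^{-1})^*\bm{x}\end{pmatrix}.
\end{equation*}
Hence $\re(\bm{B}^*\bm{x})=\bm{0}$ amounts to the system of $n+m$ scalar equations $\re(\bar c_j z_j)=0$, where $c_j$ ranges over the unit-modulus entries of $\alpha_k\bm{x}^{\natural}/\lvert\bm{x}^{\natural}\rvert$ and $\alpha_k\bm{A}^*\bm{x}^{\natural}/\lvert\bm{A}^*\bm{x}^{\natural}\rvert$, and $z_j$ over the matching entries of $(\bm{L}^{-1})^*\bm{x}$ and $\bm{A}^*(\bm{L}^{-1})^*\bm{x}$. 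I would then invoke the planar fact that, for $\lvert c\rvert=1$, $\re(\bar c z)=0$ holds iff $z$ is a real multiple of $ic$, equivalently (for $z\neq 0$) iff $z/\lvert z\rvert=\pm ic$; recording the sign in $\delta_j\in\{1,-1\}$ and collecting the two blocks gives precisely the asserted identity. The global phase $\alpha_k$ on the right-hand side is harmless, since $\bm{x}^{\natural}$ is defined only up to a global phase, so one may replace $\bm{x}^{\natural}$ by $\alpha_k\bm{x}^{\natural}$ and keep $\bm{\delta}\in\{\pm1\}^{n+m}$. The converse is immediate: if the identity holds then $\bar c_j z_j=\lvert z_j\rvert\, i\delta_j\in i\mathbb{R}$ for every $j$, so $\re(\bm{B}^*\bm{x})=\bm{0}$ and $\norm{\im(\bm{B}^*\bm{x})}=1$.

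The one delicate point — the closest thing to an obstacle — is the treatment of vanishing entries: the phase equation tacitly presumes that every entry of $(\bm{L}^{-1})^*\bm{x}$ and of $\bm{A}^*(\bm{L}^{-1})^*\bm{x}$ is nonzero, since otherwise some block of the left-hand side is $\bm{0}$ under the convention $0/\lvert0\rvert=0$ while the right-hand side has unit modulus. I would settle this either by observing that $\bm{L}$ is invertible and $\bm{A}^*$ has full column rank, so these entries cannot all vanish and, for the unit vectors $\bm{x}$ relevant to the eigen-analysis, do not vanish; or by reading the identity blockwise on the support, which is all that the intended consequence (strictness $\sigma_2<1$ of the second singular value) requires. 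Beyond this, the proof carries no genuine difficulty — it is just the isometry of $\bm{B}^*$ together with the real/imaginary split $\bm{B}^*\bm{x}=\re(\bm{B}^*\bm{x})+i\,\im(\bm{B}^*\bm{x})$.
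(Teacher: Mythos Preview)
Your proposal is correct and follows essentially the same line as the paper. The paper writes the chain $\norm{\im(\bm{B}^*\bm{x})}\le\norm{\bm{C}^*\bm{x}}=\norm{\bm{x}}$ using the isometry of $\bm{C}^*$ and then reads off the equality case componentwise, whereas you use the equivalent Pythagorean form $\norm{\bm{B}^*\bm{x}}^2=\norm{\re(\bm{B}^*\bm{x})}^2+\norm{\im(\bm{B}^*\bm{x})}^2=1$ to force $\re(\bm{B}^*\bm{x})=\bm{0}$; the two are the same argument phrased slightly differently, and your discussion of the zero-entry edge case is a point the paper leaves implicit.
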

\begin{proof}
  We have
  \begin{equation*}
  \norm{  \im(\bm{B}^*\bm{x})}=\norm{\im\left(
      \begin{matrix}
        \frac{\overline{\alpha_k\bm{x}^{\natural}}}{\lvert \bm{x}^{\natural}\rvert}\circ (\bm{L}^{-1})^*\bm{x}\\
        \frac{\overline{\alpha_k\bm{A}^*\bm{x}^{\natural}}}{\lvert \bm{A}^*\bm{x}^{\natural}\rvert}\circ \bm{A}^*(\bm{L}^{-1})^*\bm{x}
      \end{matrix}\right)}=\norm{\im\left( \begin{matrix}
        \frac{\overline{\alpha_k\bm{x}^{\natural}}}{\lvert \bm{x}^{\natural}\rvert}\circ \frac{(\bm{L}^{-1})^*\bm{x}}{\lvert (\bm{L}^{-1})^*\bm{x}\rvert}\circ \lvert (\bm{L}^{-1})^*\bm{x}\rvert\\
        \frac{\overline{\alpha_k\bm{A}^*\bm{x}^{\natural}}}{\lvert \bm{A}^*\bm{x}^{\natural}}\rvert \circ \frac{\bm{A}^*(\bm{L}^{-1})^*\bm{x}}{\lvert\bm{A}^*(\bm{L}^{-1})^*\bm{x}\rvert}\circ \lvert \bm{A}^*(\bm{L}^{-1})^*\bm{x}\rvert
      \end{matrix}\right)}\leq \norm{\bm{C}^*\bm{x}}= \norm{\bm{x}},
  \end{equation*}
as $\bm{C}^*$ is isometric. The equality holds if and only if
\begin{equation*}
  \begin{pmatrix}
    \frac{\overline{\alpha_k\bm{x}^{\natural}}}{\lvert \bm{x}^{\natural}\rvert}\circ \frac{(\bm{L}^{-1})^*\bm{x}}{\lvert (\bm{L}^{-1})^*\bm{x}\rvert}\\
        \frac{\overline{\alpha_k\bm{A}^*\bm{x}^{\natural}}}{\lvert \bm{A}^*\bm{x}^{\natural}}\rvert \circ \frac{\bm{A}^*(\bm{L}^{-1})^*\bm{x}}{\lvert\bm{A}^*(\bm{L}^{-1})^*\bm{x}\rvert}
  \end{pmatrix}= i\bm{\delta},
\end{equation*}
where components of $i\bm{\delta}$ are either $1$ or $-1$. So we have
\begin{equation*}
  \begin{pmatrix}
    \frac{(\bm{L}^{-1})^*\bm{x}}{\lvert (\bm{L}^{-1})^*\bm{x}\rvert}\\
    \frac{\bm{A}^*(\bm{L}^{-1})^*\bm{x}}{\lvert\bm{A}^*(\bm{L}^{-1})^*\bm{x}\rvert}
  \end{pmatrix}=i\bm{\delta}\circ
  \begin{pmatrix}
    \frac{\alpha_k\bm{x}^{\natural}}{\lvert\bm{x}^{\natural}\rvert}\\
    \frac{\alpha_k\bm{A}^*\bm{x}^{\natural}}{\lvert\bm{A}^*\bm{x}^{\natural}\rvert}
  \end{pmatrix}.
\end{equation*}
\end{proof}

From the uniqueness of the magnitude retrieval for sampling vectors $\{\bm{a}_i\}$ (Assumption~\ref{assu:1}), we have $\bm{x}=\pm i\frac{\bm{L}^*\alpha_k\bm{x}^{\natural}}{\lvert \bm{L}^*\bm{x}^{\natural}\rvert}$. By the result $\sigma_2= \max\{\norm{\im(\bm{B}^*\bm{u})}:\bm{u}\in\mathbb{C}^n,\bm{u}\perp iG^{-1}(\bm{\xi}_1),\norm{\bm{u}}=1\}$~\cite{Chen2015}, thus $\sigma_2<1$. 

In a nutshell, we have shown the local convergence of GPS and its robust version RGPS for phase retrieval problem~\eqref{eq:prob}. Though we have not given the attraction radius around the solution, it seems that GPS/RGPS shows global convergence starting from a random initialization when the ratio $m/n$ is large enough for Gaussian phase retrieval in all our numerical experiments. At the process of preparing this manuscript, a close work on the global convergence on alternating minimization for Gaussian phase retrieval is uploaded to arXiv~\cite{Zhang2018}. The requirement is $m/\log^3m\geq Mn^{3/2}\log^{1/2}n$ as $n,m\to\infty$. Its argument may be helpful for the proof of global convergence of GPS/RGPS.

\section{Numerical Experiments}

In this section, we use various tests to demonstrate the performance of GPS/RGPS applied to phase retrieval problem~\eqref{eq:prob} with and without prior information. To measure the reconstruction quality, we define the relative error between the reconstruction $\bm{x}$ and the optimal solution $\bm{x}^{\natural}$ as follows
\begin{equation}
  \label{eq:dis}
  rel. err(\bm{x},\bm{x}^{\natural})=\frac{\dist(\bm{x},\bm{x}^{\natural})}{\norm{\bm{x}^{\natural}}}=\min_{\lvert\alpha\rvert=1}\frac{\norm{\bm{x}-\alpha\bm{x}^{\natural}}}{\norm{\bm{x}^{\natural}}}.
\end{equation}
It is easy to see that the optimal $\alpha$ is given by $\frac{\bm{x}^*\bm{x}^{\natural}}{\lvert \bm{x}^*\bm{x}^{\natural}\rvert}$.

We first explore the numerical phase transition of GPS/RGPS applied to Gaussian phase retrieval without prior information. Then we add sparsity prior based on $\ell_0$ and $\ell_1$ norms respectively. We also consider more challenging and practical non-Gaussian transmission examples and (oversampling) Fourier phase retrieval. Current nonconvex solvers degrade significantly for non-Gaussian measurements without the help of randomness. With the flexibility and easiness of imposing prior information, we include nonnegativity condition in GPS/RGPS for transmission datasets. We show that the reconstruction is better than existing solvers, which have no straightforward way to add nonnegativity information. For Fourier phase retrieval, we consider three additional prior information: nonnegativity, rectangular support and total variation (TV) regularization. Experiments demonstrate superior performance of GPS/RGPS.

\subsection{Synthetic Gaussian Phase Retrieval}

Recently Gaussian phase retrieval is the most popular model problem of phase retrieval in the literature. We first compare GPS and RGPS to existing solvers for Gaussian phase retrieval without prior information. For Gaussian phase retrieval, problem~\eqref{eq:prob} is called real and complex cases if $\bm{a}_i$'s and the unknown $\bm{x}$ belong to $\mathbb{R}^n$ and $\mathbb{C}^n$ respectively. For the two cases, the length of the signal $n$ is set to $400$. We generate real/complex Gaussian signals at random, and draw the sampling vectors $\bm{a}_i\sim\mathcal{N}(0,\bm{I})$ and
$\bm{a}_i\sim\mathcal{CN}(0,\bm{I})=\mathcal{N}(0,\bm{I}/2)+i\mathcal{N}(0,\bm{I}/2)$ for real and complex cases respectively. We compare GPS/RGPS and Douglas-Rachford (DR) and robust Douglas-Rachford (RDR) with other nonconvex optimization algorithms, such as RAF~\cite{Wang2017Solving}, TAF~\cite{Wang16}, TWF~\cite{Chen2015b} and WirtFlow~\cite{Candes2015}, all of which are gradient flow based and need to tune an optimal step size. We use the default configuration of the parameters described in the corresponding references. Note that here we only consider the nonconvex approaches for comparison, as the comparison of nonconvex solvers to SDR PhaseLift approach can be found in~\cite{Li2018}. All tests are conducted with Monte-Carlo simulations. We do not explore the optimal $t$ for RDR and RGPS and set $t=0.1$ throughout all of our experiments. We set the maximum number of iterations to $5000$ and the relative error tolerance to $1e-3$ for each method. When either one of the two criteria satisfies, the iteration is stopped.

\paragraph{Phase Transition}

For Gaussian phase retrieval, the benchmark test is to compare the phase transition of each algorithm. The \emph{phase transition} is the critical point of the ratio of the number of measurements $m$ to the length of the signal $n$. When $m/n$ is above the phase transition, the unique solution can be located by the algorithm for problem~\eqref{eq:prob}. When $m/n$ is below the phase transition, there exists at least an instance  for which the algorithm failed to find the true solution. To numerically test the phase transition, for a fixed signal with length $n$, we test an array of different measurement sizes $m=\{1n,1.1n,\ldots,5n\}$. For each pair of $n$ and $m$, we solve $30$ randomly generated problems and calculate the successful recovery rate. A recovery is successful if the relative error is below $1e-3$. 

Since all of the compared algorithms are nonconvex, their performance depends on the initialization crucially. First we start these methods from the point returned by the reweighted maximal correlation method~\cite{Wang2017Solving}. The initial guess is close to the optimal solution if the number of measurement $m=\mathcal{O}(n)$. We compute the recovery rate and plot the curves of recovery rate in Figure~\ref{fig:1a} and~\ref{fig:1b} for real  and complex cases respectively. We also explore the effect of the initialization on these nonconvex methods, i.e., we start all algorithms from a random point. Phase transition in this case is plotted in Figure~\ref{fig:1c} and~\ref{fig:1d}. It shows GPS/RGPS and DR/RDR are much less sensitive to initialization than other gradient flow based solvers. Compared to DR and RDR, GPS and RGPS show sharper phase transition, in particular for small number of measurement. This demonstrates that GPS/RGPS are more likely to escape the stagnation point than DR/RDR. GPS shows the sharpest phase transition among all methods, successfully solving problem~\eqref{eq:prob} with $m=1.7n$ and $m=2.7n$ measurements for real and complex cases respectively. This phase transition is also better than the incremental nonconvex approach for PhaseLift~\cite{Li2018}. 
  
\begin{figure}
     \begin{subfigure}[b]{.45\textwidth}
     \centering
     \includegraphics[width=\textwidth]{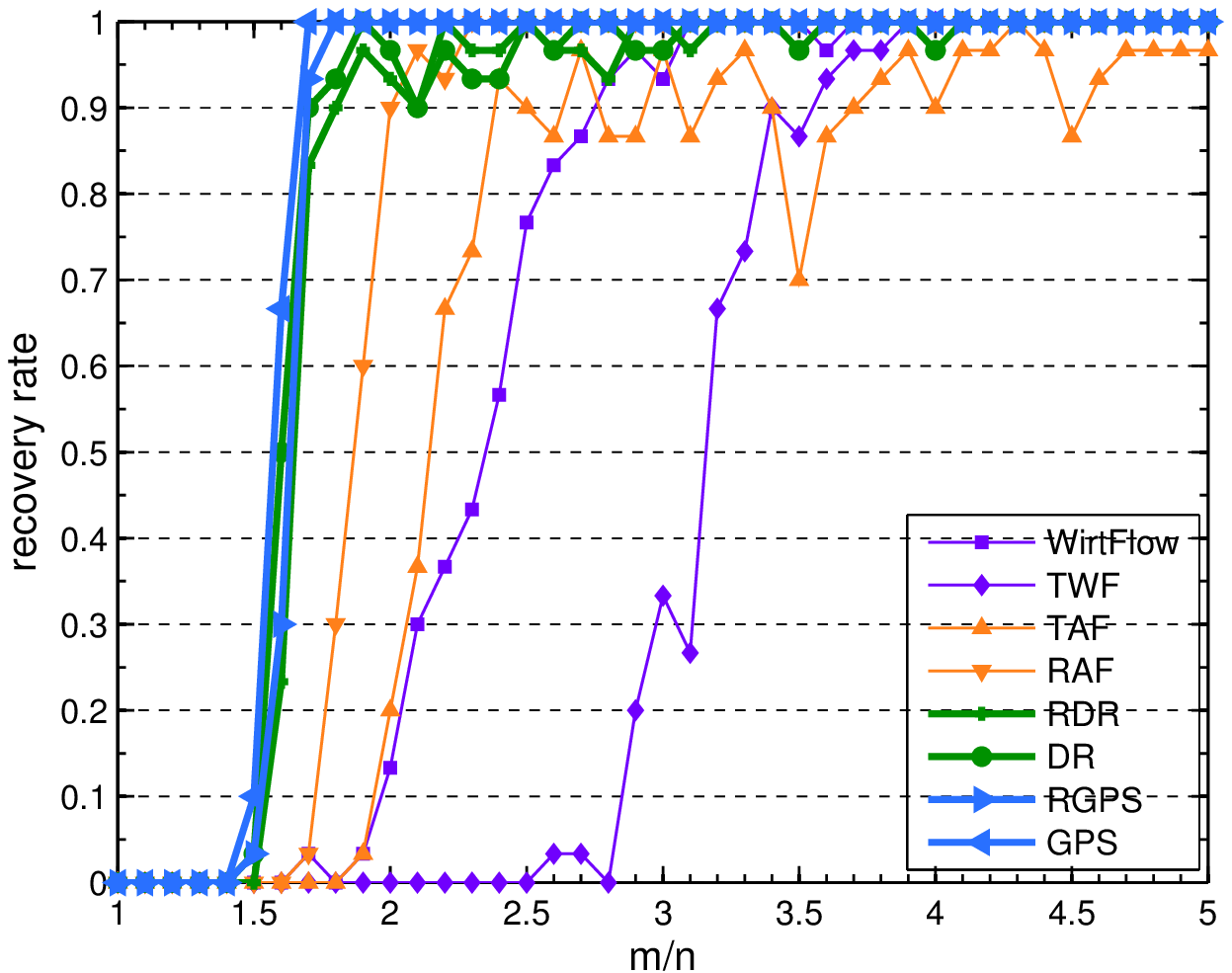}
     \caption{Standard real Gaussian model\label{fig:1a}}
   \end{subfigure}
 \begin{subfigure}[b]{.45\textwidth}
     \centering
     \includegraphics[width=\textwidth]{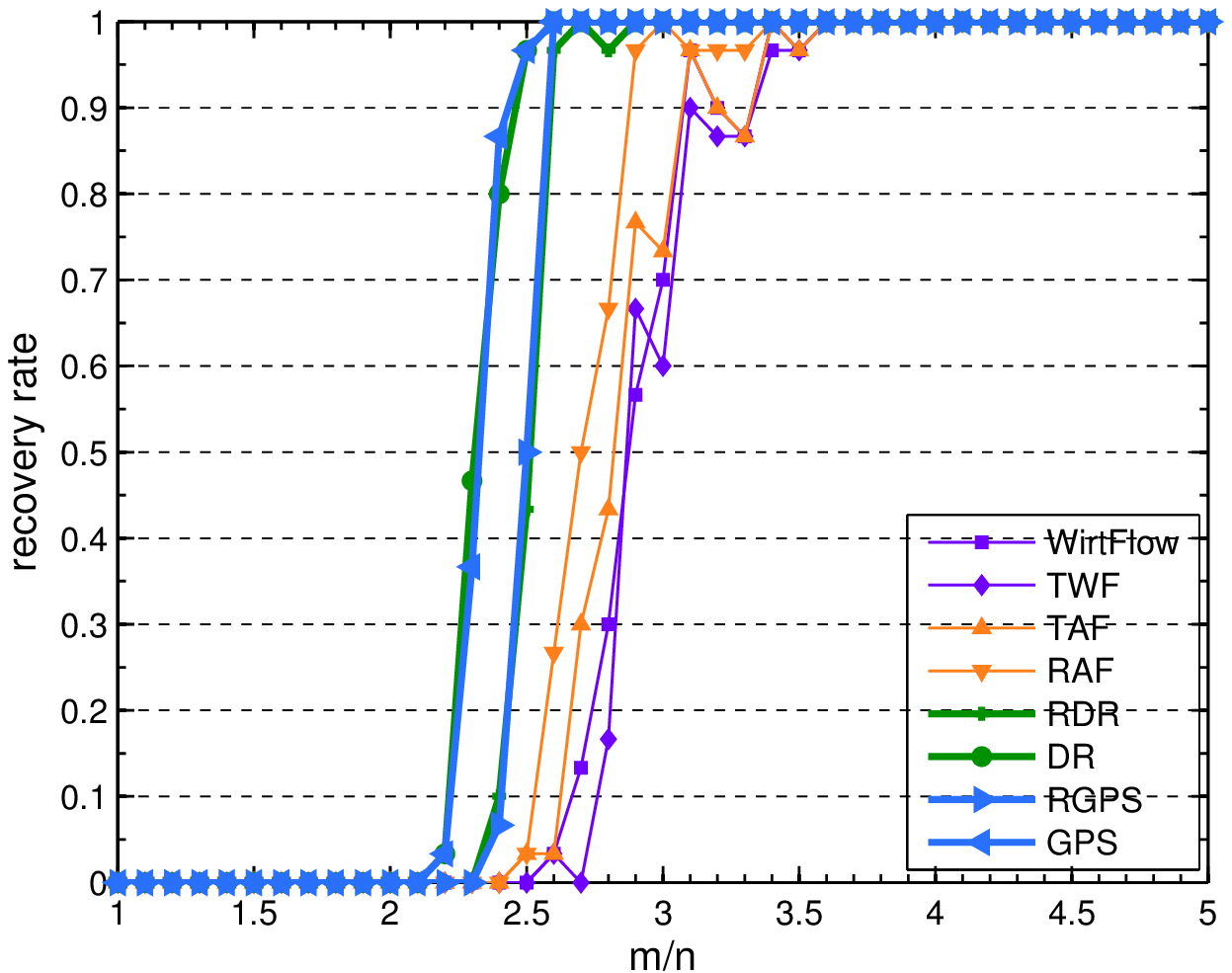}
     \caption{Standard complex Gaussian model\label{fig:1b}}
   \end{subfigure}\\
       \begin{subfigure}[b]{.45\textwidth}
     \centering
     \includegraphics[width=\textwidth]{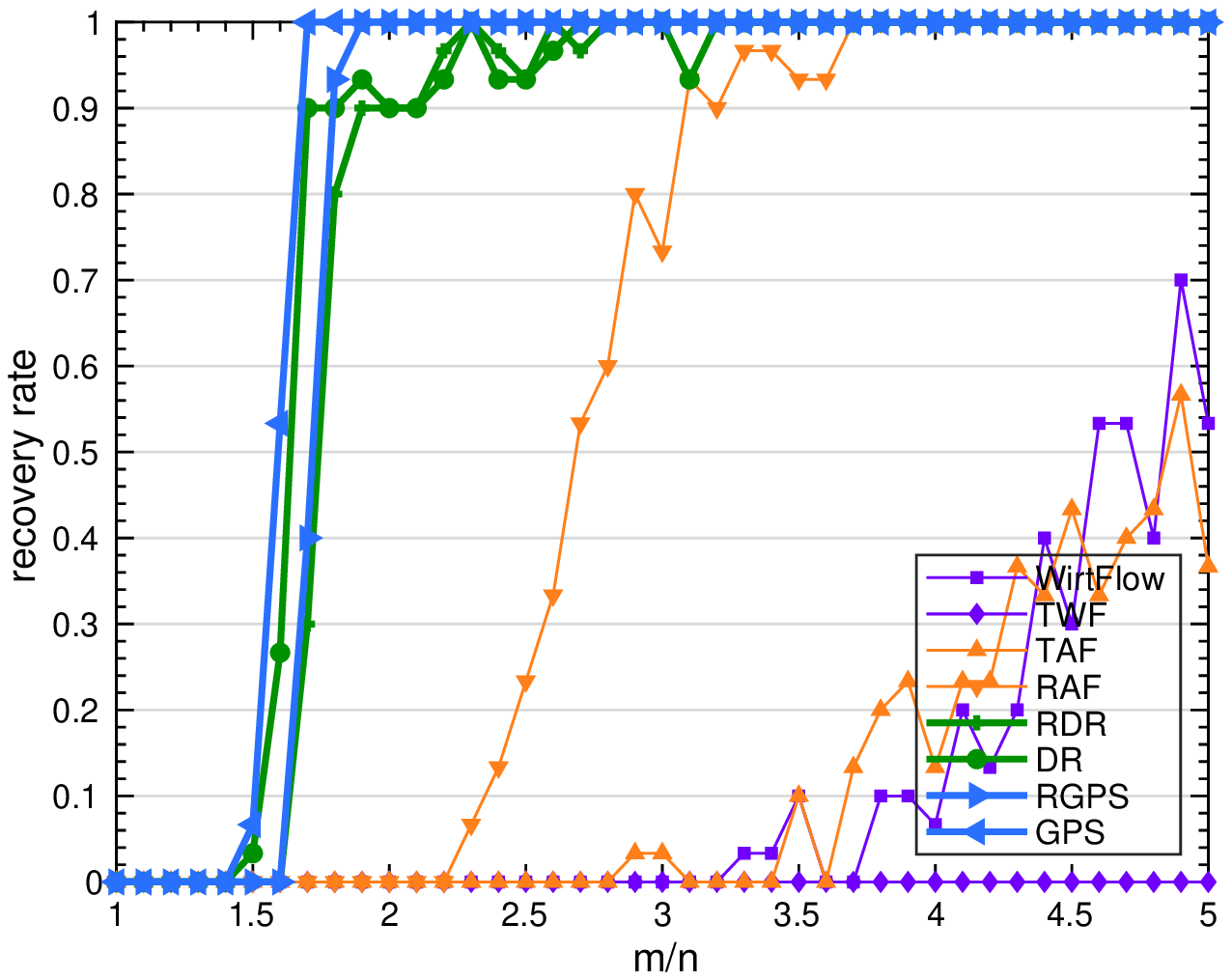}
     \caption{Standard real Gaussian model\label{fig:1c}}
   \end{subfigure}
 \begin{subfigure}[b]{.45\textwidth}
     \centering
     \includegraphics[width=\textwidth]{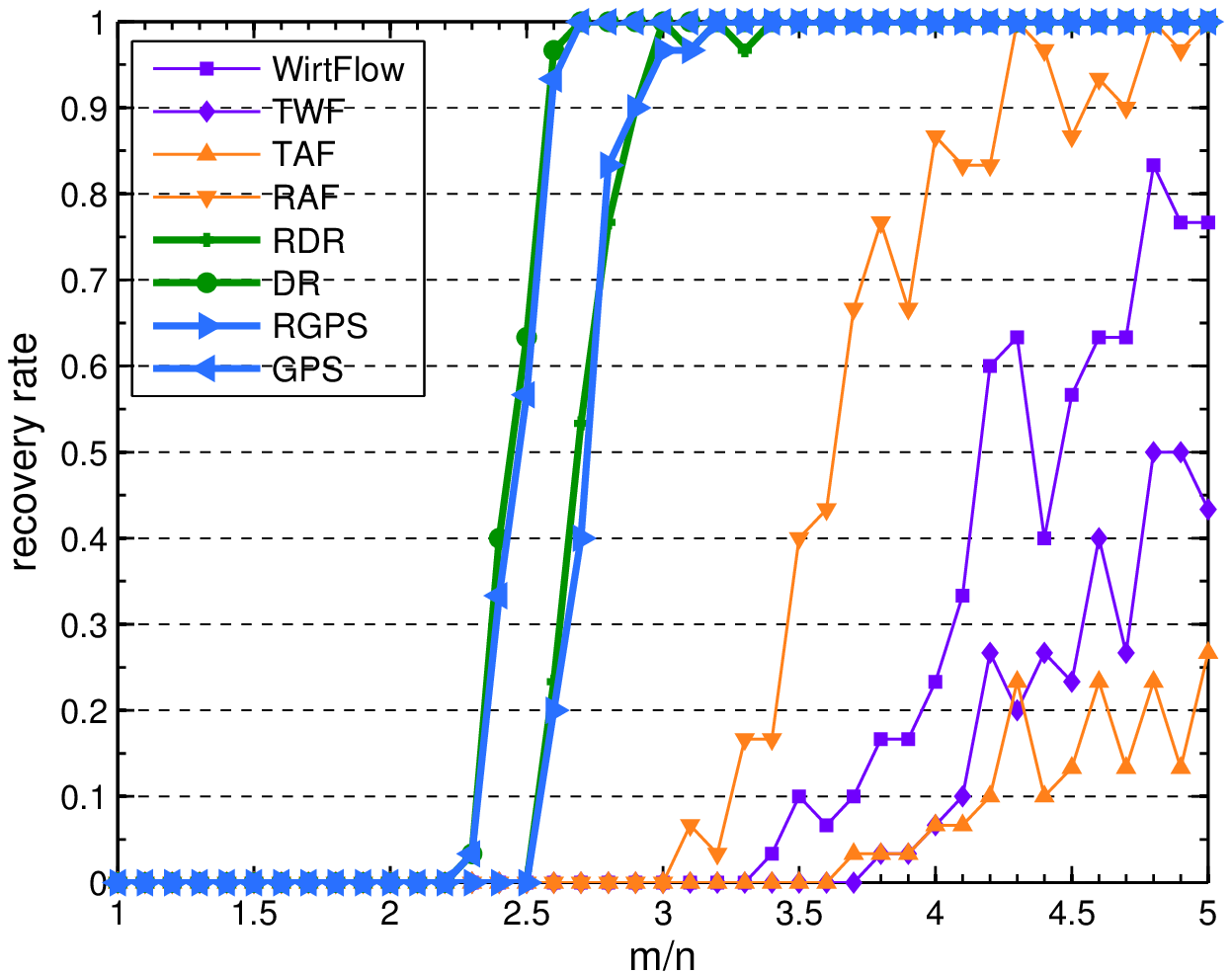}
     \caption{Standard complex Gaussian model\label{fig:1d}}
   \end{subfigure}
  \caption{Recovery rate for real and complex Gaussian phase retrieval. The top row shows results from the specific initialization~\cite{Wang2017Solving}. The bottom row shows results from random initialization.\label{fig:1}}
\end{figure}

For phase transition, compared with GPS, RGPS shows inferior performance and requires slightly more measurements for successful recovery, as shown in Figure~\ref{fig:1}. We list the average number of iteration of GPS/RGPS and DR/RDR in Table~\ref{tab:1}. For the four methods, their residual curves fall into two stages, locating the contraction basin and convergence to the solution. There are two different features of the residual curves of GPS/DR and RGPS/RDR. GPS/DR locate the contraction basin faster than their counterparts while RGPS/RDR converge faster than GPS/DR once in the contraction basin. Due to these different behaviors, the iteration number for robust versions behave differently as the number of measurement changes. When $m\geq 1.9n$ and $m\geq 3.1n$ for real and complex cases respectively, the average number of iteration of RGPS and RDR is smaller than that of their counterparts. Within $5000$ iterations, all of them find the solution, while GPS and DR oscillate around the solution, which leads to more iterations in total. However, when $m$ is smaller than the above critical point, the robust versions take more iterations and performs inferior to their counterparts in computation cost and phase transition. Within $5000$ iterations, GPS and DR locate the basin of solution and converge, while RGPS and RDR do not locate the basin of solution. Based on these behaviors, we can combine GPS and RGPS to benefit from both of their advantages. 

\begin{table}
\centering
\caption{Comparison of average number of iteration from $30$ repeat experiments.\label{tab:1}}
\ra{1.3}
\begin{tabular}{c@{}c@{}rrrrc@{}c@{}rrrr}
\toprule
&\phantom{ab}& 
\multicolumn{4}{c}{real} & &
\phantom{ab} & 
\multicolumn{4}{c}{comp}\\
\cmidrule{3-6} \cmidrule{9-12}
$m/n$&& GPS & RGPS & DR & RDR &$m/n$ && GPS & RGPS & DR & RDR\\
\midrule
1.5  && 4852 & 5000 & 4976 & 5000 &2.5&& 4704 & 5000 & 4837 & 5000 \\
1.6  && 3784 & 5000 & 4062 & 5000 &2.6&& 3986 & 5000 & 3971 & 5000 \\
1.7 && 1363 & 3842 & 972 & 4211 &2.7&& 2687 & 4341 & 2514 & 4699 \\
1.8 && 825 & 1178 & 504 & 1421 &2.8&& 1999 & 3899 & 2027 & 3157 \\
1.9 && 1131 & 444 & 1398 & 407 &2.9&& 1566 & 1847 & 1653 & 1853 \\
2.0 && 956 & 111 & 573 & 144 &3.0&& 1469 & 1584 & 1382 & 1543 \\
2.1 && 889 & 96 & 698 & 109 &3.1&& 1299 & 904 & 1207 & 911 \\
2.2 && 659 & 73 & 845 & 73 &3.2&& 1205 & 599 & 1200 & 691 \\
2.3 && 1000 & 68 & 937 & 63 &3.3&& 1248 & 411 & 1080 & 605 \\
2.4 && 854 & 63 & 933 & 56 &3.4&& 1011 & 370 & 1051 & 498 \\
2.5 && 742 & 61 & 699 & 56 &3.5&& 1115 & 339 & 1219 & 396 \\
2.6 && 1182 & 61 & 1021 & 52 &3.6&& 1460 & 316 & 1224 & 298 \\
2.7 && 894 & 58 & 1081 & 49 &3.7&& 1507 & 274 & 1228 & 278 \\
2.8 && 1000 & 56 & 1100 & 50 &3.8&& 1234 & 258 & 1286 & 257 \\
2.9 && 458 & 56 & 839 & 49 &3.9&& 1067 & 248 & 1198 & 249 \\
3.0 && 706 & 54 & 439 & 45 &4.0&& 1258 & 240 & 1117 & 238 \\
\bottomrule
\end{tabular}
\end{table}

\paragraph{Noisy Case}

When measurements are corrupted by noise, RGPS and RDR should outperform GPS and DR. We consider Gaussian noise $\bm{\epsilon}^{\text{noise}}$. To specify the effect of noise level, we generate noisy real and complex measurements with different SNR levels at $\{10,15,\ldots,50\}$. We set the number of measurement $m$ to $2n$ and $3n$ for real and complex cases respectively and the maximum number of iterations to $200$. All test algorithms start from random initializations, since all initialization algorithms degrades significantly in the noisy cases for small ratio $m/n$. The behavior of relative error vs. iteration number of a typical run for noisy measurement is plotted in Figure~\ref{fig:2a} and~\ref{fig:2b}. The oscillatory behavior of GPS/DR is clearly shown while their robust versions are much more monotone. The relative recovery error (in dB) vs. noise level (SNR) is plotted in Figure~\ref{fig:2c} and~\ref{fig:2d}. Other gradient flow based non-convex methods fail to locate a solution of relative error blow unity starting from a random initialization. Further experiments suggest  that when the output from RGPS or RDR, are input to these gradient flow based non-convex methods, a better solution can be located. Thus, one can utilize a hybrid approach: first locate an approximate recovery using RGPS, then feed it to another state-of-the-art method to further refine the recovery. 

\begin{figure}
     \begin{subfigure}[b]{.45\textwidth}
     \centering
     \includegraphics[width=\textwidth]{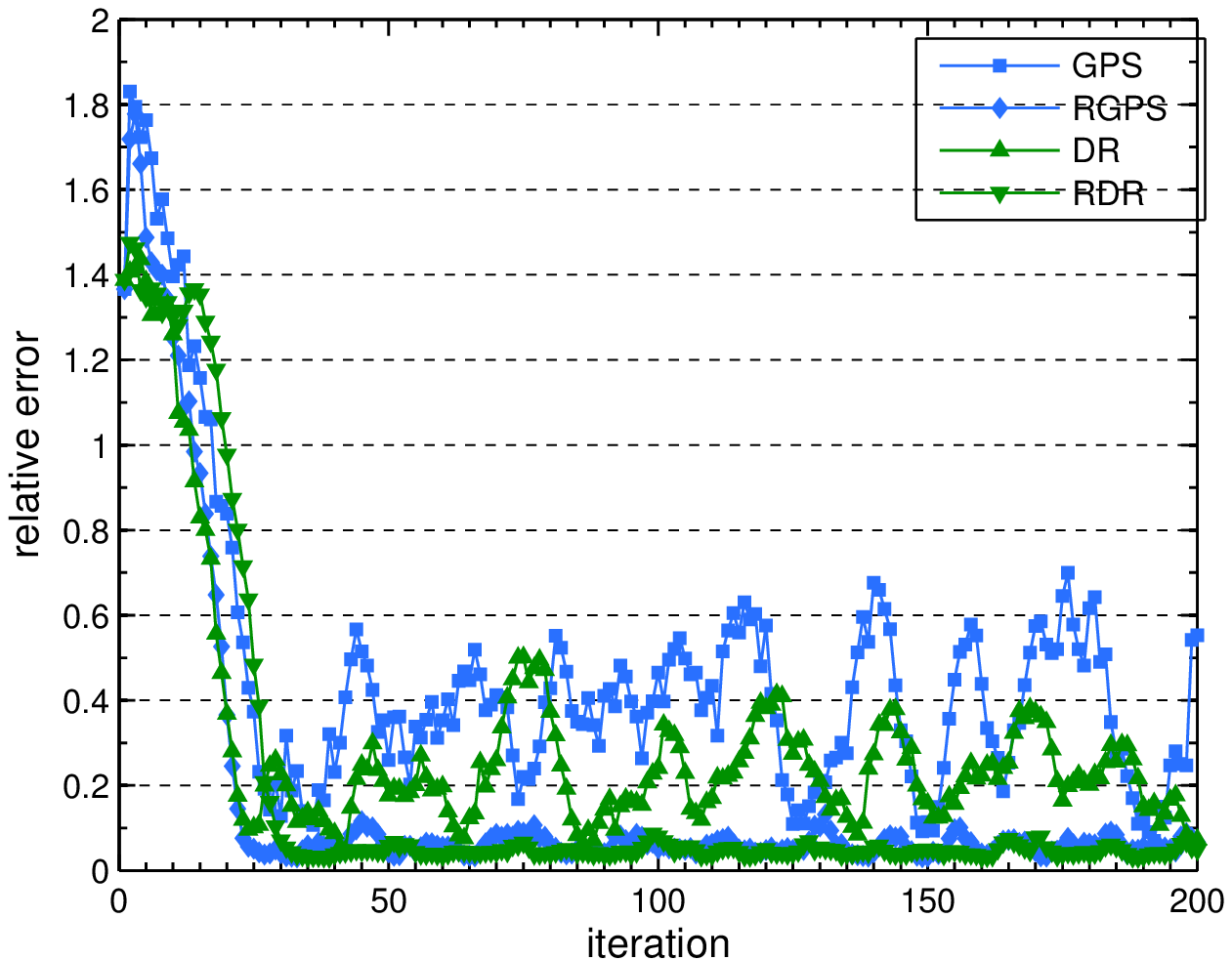}
     \caption{Standard real Gaussian model\label{fig:2a}}
   \end{subfigure}
 \begin{subfigure}[b]{.45\textwidth}
     \centering
     \includegraphics[width=\textwidth]{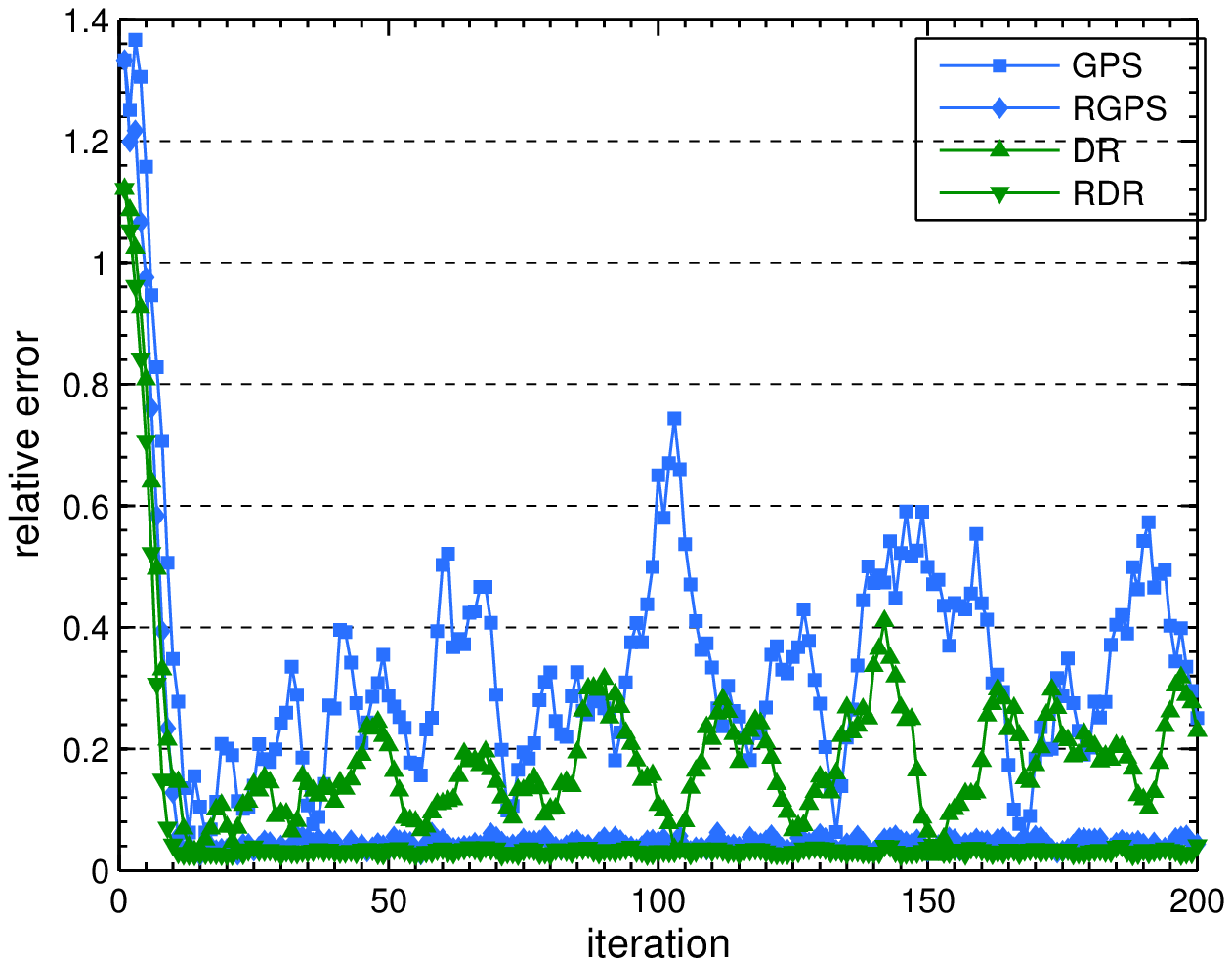}
     \caption{Standard complex Gaussian model\label{fig:2b}}
   \end{subfigure}\\
       \begin{subfigure}[b]{.45\textwidth}
     \centering
     \includegraphics[width=\textwidth]{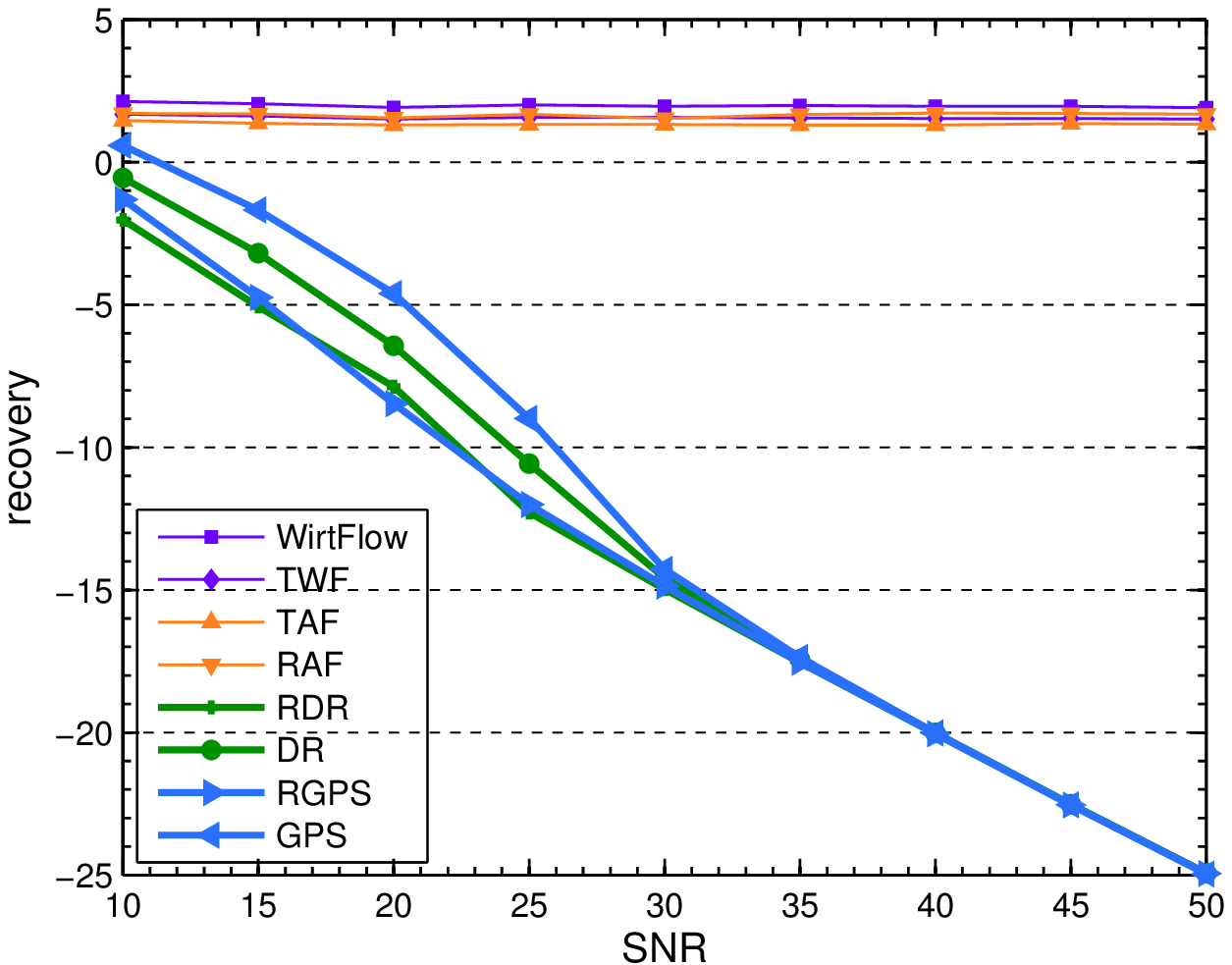}
     \caption{Standard real Gaussian model\label{fig:2c}}
   \end{subfigure}
 \begin{subfigure}[b]{.45\textwidth}
     \centering
     \includegraphics[width=\textwidth]{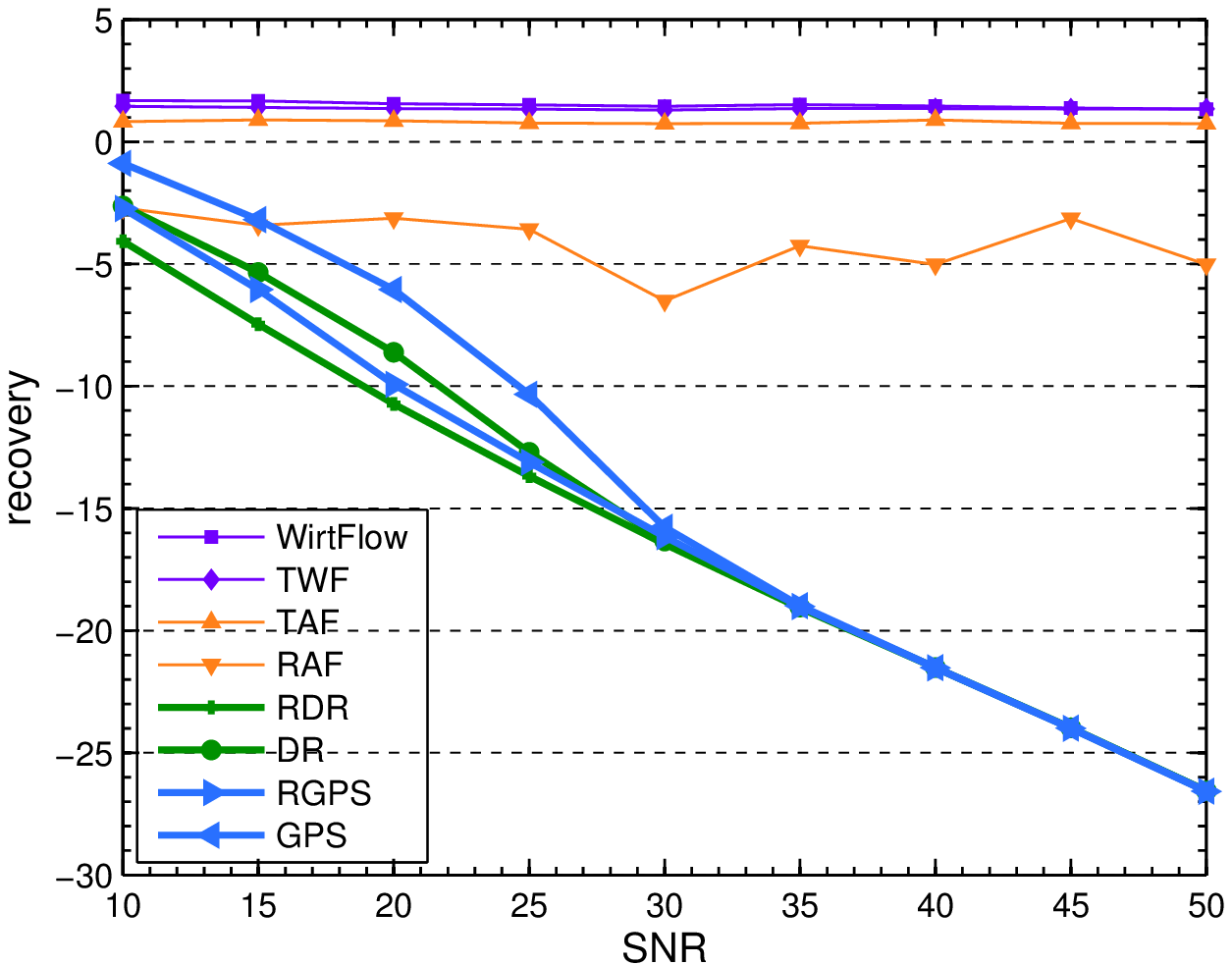}
     \caption{Standard complex Gaussian model\label{fig:2d}}
   \end{subfigure}
  \caption{Recovery for noisy measurements. Top row shows relative error vs. iteration number. Bottom row shows the relative recovery error vs. SNR.\label{fig:2}}
\end{figure}

\subsection{Sparse Phase Retrieval}

In this section, we consider the phase retrieval problem with additional prior information of sparsity and nonnegativity of the solution. Function $g$ can take $\ell_1$ and $\ell_0$ respectively to promote sparsity in convex and nonconvex forms. The length of the signal is still $400$. We set the known support to $J=[0,floor(n/2)]$ and the sparsity $s$  to $10,20,30$. The number of measurement $m$ is set to $n$.

With the sparsity-promoting term $g$, $\bm{x}$ can be updated by the following simple operations for the $\ell_1$ and $\ell_0$ functions respectively:
\begin{equation*}
  \bm{x}^{k+1}=
  \begin{cases}
    \max(\bm{x}^{k+\frac{1}{2}}+\bm{\lambda}^k-p,0) & g=\ell_1\\
    \text{hardthreshold}(\bm{x}^{k+\frac{1}{2}}+\bm{\lambda}^k) & g=\ell_0
  \end{cases}
\end{equation*}
where $p$ is the parameter for soft thresholding and the hard thresholding operator is just keeping the leading $s$ largest entries. For $\ell_1$ case, the recovery depends on the parameter $p$, which we set to $10,20,30$ for the sparsity level $s=10,20,30$ respectively. We report the successful recovery rate and the number of iterations in Table~\ref{tab:2}. The performance based on $\ell_1$ is much better than that based on $\ell_0$. 

\begin{table}
\centering
\caption{Recovery from $10$ repeated experiments with sparsity prior information.\label{tab:2}}
\ra{1.3}
\begin{tabular}{c@{}c@{}rrr@{}c@{}rrr}
\toprule
&\phantom{ab}& 
\multicolumn{3}{c}{recov. rate} & 
\phantom{ab} & 
\multicolumn{3}{c}{iteration}\\
\cmidrule{3-5} \cmidrule{7-9}
$s$&& $10$ & $20$ & $30$ && $10$ & $20$ & $30$\\
\midrule
\midrule
$\ell_1$ && 1 & 1 & 0.8 && 787 & 897 & 2223 \\
$\ell_0$ && 0.7 & 0.6 & 0.5 && 1994 & 2594 & 3169\\
\bottomrule
\end{tabular}
\end{table}

\subsection{Real Transmission Dataset}

Here we test RGPS for one type of practical measurement, where the measurement matrix $\bm{A}^*$ is a real transmission matrix provided by Phasepack~\cite{Chandra2017PhasePack}. It is used to benchmark the performance of various phase retrieval algorithms. For the transmission matrix dataset, the rows of the measurement matrix are calculated using a measurement process (also a phase retrieval problem), and some are more accurate than the others. Each measurement matrix comes with a per-row residual to measure the accuracy of that row. For our test, we use the measurement matrix \verb+A_prVAMP.mat+ of an image with a resolution ($16\times 16$). The measurement matrix can be cut off to a smaller size by only loading the more accurate rows. We set the cut-off residual bound to $0.04$, which allows the ratio $m/n$ to reach $6$. Both the maximum number of iteration of RGPS and RAF are set to 3000. 

The signal is of length $n=256$. We randomly collect the sampling matrix from $\bm{A}^*$ with the number of measurement $m=2n,3n,4n,5n,6n$. For each pair $(m,n)$, we repeatedly solve each problem $10$ times from different initializations. When applying RGPS, we impose both real-valuedness and nonnegativity prior, while we only impose real-valuedness for RAF, as nonnegativity can not be easily imposed in RAF. The average reconstruction error~\footnote{We calculate the error by the algorithm provided in the paper~\cite{Chandra2017PhasePack}.} is listed in Table~\ref{tab:3}. Although different initialization leads to different reconstruction, RGPS outperforms RAF in all cases, in particular when the ratio $m/n$ is below $6$. The best reconstructions of RGPS and RAF for each pair $(m,n)$ are plotted in Figure~\ref{fig:5}. The subcaption below each image denotes the relative error of reconstruction. 

\begin{table}
\centering
\caption{Relative reconstruction error averaged from $10$ repeated experiments from different random initializations.\label{tab:3}}
\ra{1.3}
\begin{tabular}{c@{}c@{}ccccc}
\toprule
$sampl. $&& $m=2n$ & $m=3n$ & $m=4n$ & $m=5n$ & $m=6n$ \\
\midrule
 RGPS && 0.6245 & 0.6670 & 0.6927 & 0.6666 & 0.6227 \\
 RAF && 0.8253 & 0.8224 & 0.8156 & 0.8195 & 0.6357\\
\bottomrule
\end{tabular}
\end{table}

\begin{figure}
  \centering
       \begin{subfigure}[b]{.18\textwidth}
     \centering
     \includegraphics[width=\textwidth]{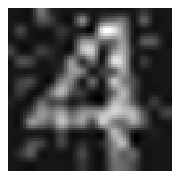}
     \caption{0.5748}
   \end{subfigure}
     \begin{subfigure}[b]{.18\textwidth}
     \centering
     \includegraphics[width=\textwidth]{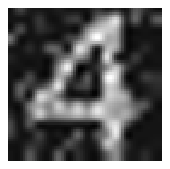}
     \caption{0.5738}
   \end{subfigure}
 \begin{subfigure}[b]{.18\textwidth}
     \centering
     \includegraphics[width=\textwidth]{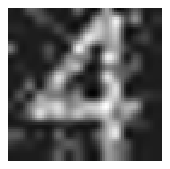}
     \caption{0.5681}
   \end{subfigure}
\begin{subfigure}[b]{.18\textwidth}
     \centering
     \includegraphics[width=\textwidth]{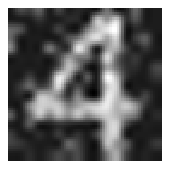}
     \caption{0.5586}
   \end{subfigure}
\begin{subfigure}[b]{.18\textwidth}
     \centering
     \includegraphics[width=\textwidth]{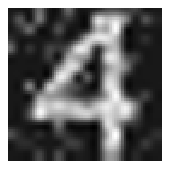}
     \caption{0.5214}
   \end{subfigure}\\
   \begin{subfigure}[b]{.18\textwidth}
     \centering
     \includegraphics[width=\textwidth]{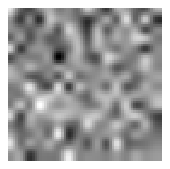}
     \caption{0.8193}
   \end{subfigure}
\begin{subfigure}[b]{.18\textwidth}
     \centering
     \includegraphics[width=\textwidth]{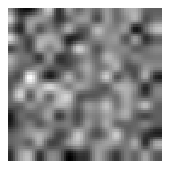}
     \caption{0.8147}
   \end{subfigure}
 \begin{subfigure}[b]{.18\textwidth}
     \centering
     \includegraphics[width=\textwidth]{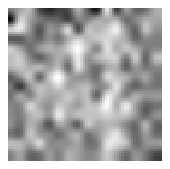}
     \caption{0.8055}
   \end{subfigure}
\begin{subfigure}[b]{.18\textwidth}
     \centering
     \includegraphics[width=\textwidth]{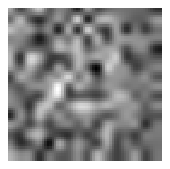}
     \caption{0.8041}
   \end{subfigure}
\begin{subfigure}[b]{.18\textwidth}
     \centering
     \includegraphics[width=\textwidth]{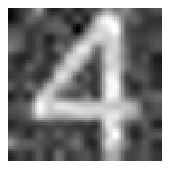}
     \caption{0.5228}
   \end{subfigure}
\caption{Reconstruction from RGPS (1st row) and RAF (2nd row)  using transmission datasets of different sizes  (from left to right, $m=2n,3n,4n,5n,6n$) with relative error in subcaptions. \label{fig:5}}
\end{figure}

\subsection{Incorporating TV Regularization}

Our framework can also include the TV regularization to improve the reconstruction by popular HIO for oversampling Fourier phase retrieval. 
We first solve the phase retrieval problem without TV regularization and obtain a solution as the initialization, and then refine it by adding a TV minimization term to improve the reconstruction quality. In this case, we apply RGPS to solve the following problem
\begin{equation}
\label{eq:tv}
\begin{aligned}
  \min_{\bm{x},\bm{y}_1,\bm{y}_2} \quad &f_1(\bm{y}_1) + f_2(\bm{y}_2) + g(\bm{x})\\
 \text{s.t.}\quad& \bm{A}^*\bm{x} = \bm{y}_1\\
 & \bm{D}\bm{x} = \bm{y}_2
\end{aligned}
\end{equation}
where matrix $\bm{D}$ corresponds to the total variation linear operator which comprises of horizontal and vertical differences of $\bm{x}$. The oversampling measurement means a known rectangular support $S$ of the solution which provides another constraint. Two sets of synthetic data are generated by the cameraman image and caffeine molecule~\cite{Waldspurger2015}, which are both of size $128\times 128$. We first pad the image of size $128\times 128$ to size of $256\times 256$ with zeros and the region of size $128\times 128$ is used as the known support set $S$. The prior information is $\bm{x}\in\mathbb{R}_+^{n_1\times n_2}\cap S$ and TV of the image should be small. We express ~\eqref{eq:tv} in standard form, where $f(\bm{y})=f_1(\bm{y}_1)+f_2(\bm{y}_2)=\mathbb{I}_{\lvert\bm{y}_1\rvert=\bm{b}}(\bm{y}_1)+\norm{\bm{y}_2}_1,g(\bm{x})=\mathbb{I}_{\mathbb{R}_+^{n_1\times n_2}\cap S}(\bm{x})$ and the graph set is $C=\{(\bm{x},\bm{y}_1,\bm{y}_2)|\bm{A}^*\bm{x} = \bm{y}_1,\bm{D}\bm{x} = \bm{y}_2\}$. 

For Fourier phase retrieval, $\bm{A}^*$ is isometric. By exploiting the structure of TV operator, the graph projection step $\Pi_{C}(\bm{c},\bm{d}_1,\bm{d}_2)$ of GPS is computed by solving
\begin{equation*}
  (2\bm{I}+\bm{D}^*\bm{D})\bm{x} = \bm{c} + \bm{A}\bm{d}_1+\bm{D}^*\bm{d}_2.
\end{equation*}
The conjugate gradient method is used to solve the above linear system for its matrix-free property. The robust GPS can be implemented accordingly for noisy measurements. Without TV regularization, RGPS produces about the same reconstruction quality as HIO. However, HIO is more efficient. Again we use a hybrid approach: first run HIO ($\beta=1$) by setting the number of iteration to $1000$ and then feed the output to RGPS and run $30$ iterations to solve~\eqref{eq:tv}. To investigate the effect of noise, we consider four noise levels with SNR being, $\infty$ (noiseless case), $30$, $40$ and $50$. For each case, we run HIO+RGPS $10$ times from different initializations. The average relative error is listed in Table~\ref{tab:6}. And the best outputs of HIO and HIO+RGPS for cameraman and molecule at different noise levels are depicted in Figure~\ref{fig:6}, where the subcaption gives the relative error. It is obvious that after the refinement of TV-minimization by RGPS, the quality of reconstruction is better. Note that the relative error is calculated after possible shift and mirror-reflection. There may be some misalignment of the molecule, so the relative error may increase as SNR increases. Note that the comparison between HIO and HIO+RGPS makes sense for all cases, as they share the same alignment.

\begin{table}
\centering
\caption{Average relative error of $10$ experiments starting from random initializations for Fourier phase retrieval with/without TV minimization at different noise level.\label{tab:6}}
\ra{1.3}
\begin{tabular}{c@{}c@{}rrrr@{}c@{}rrrr}
\toprule
&\phantom{ab}& 
\multicolumn{4}{c}{cameraman} & 
\phantom{ab} & 
\multicolumn{4}{c}{molecule}\\
\cmidrule{3-6} \cmidrule{8-11}
SNR && $\infty$ & 30 & 40 & 50 && $\infty$ & 30 & 40 & 50\\
\midrule
HIO && 0.0728 & 0.1721 & 0.1012 & 0.0798 && 0.3101 & 0.3096 & 0.2802 & 0.3354 \\
HIO+RGPS && 0.0289 & 0.0752 & 0.0408 & 0.0343 && 0.2149 & 0.1998 & 0.1729 & 0.2353 \\
\bottomrule
\end{tabular}
\end{table}

\begin{figure}
\centering
\begin{subfigure}[b]{.2\textwidth}
     \centering
     \includegraphics[width=\textwidth]{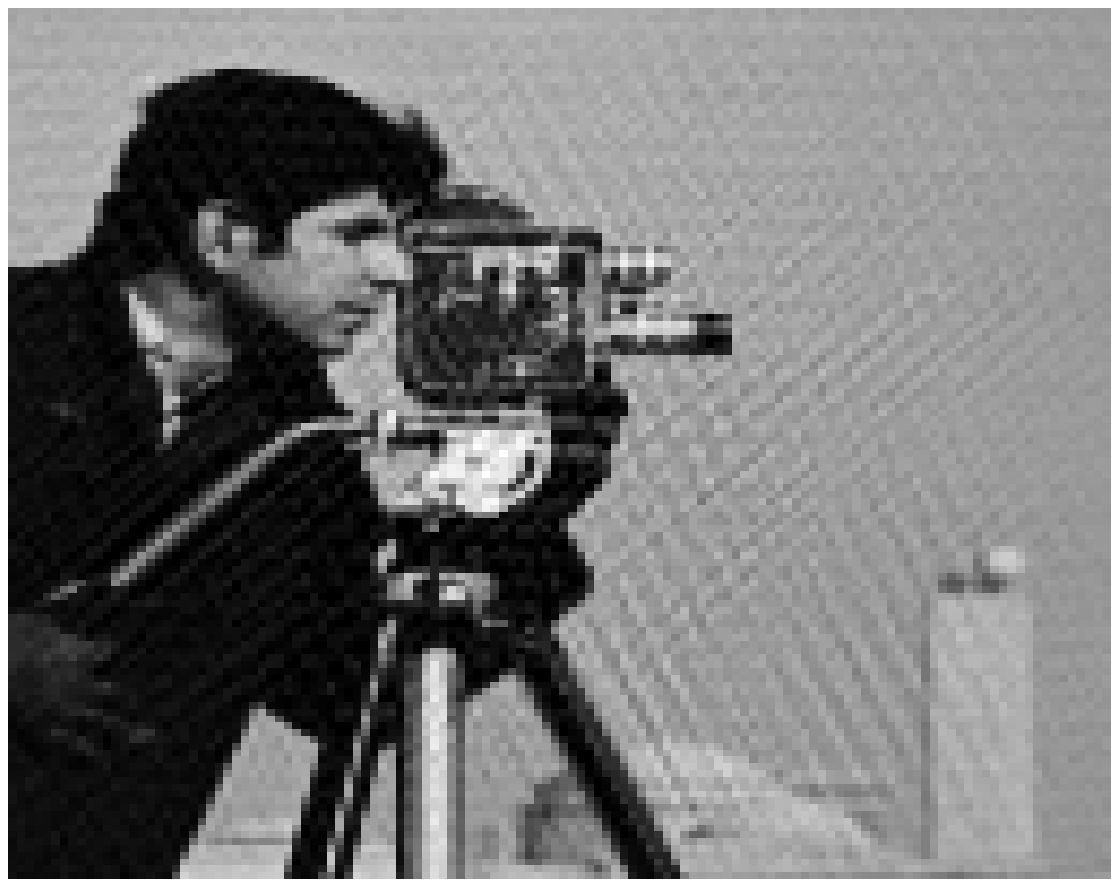}
     \caption{0.0611}
   \end{subfigure}
 \begin{subfigure}[b]{.2\textwidth}
     \centering
     \includegraphics[width=\textwidth]{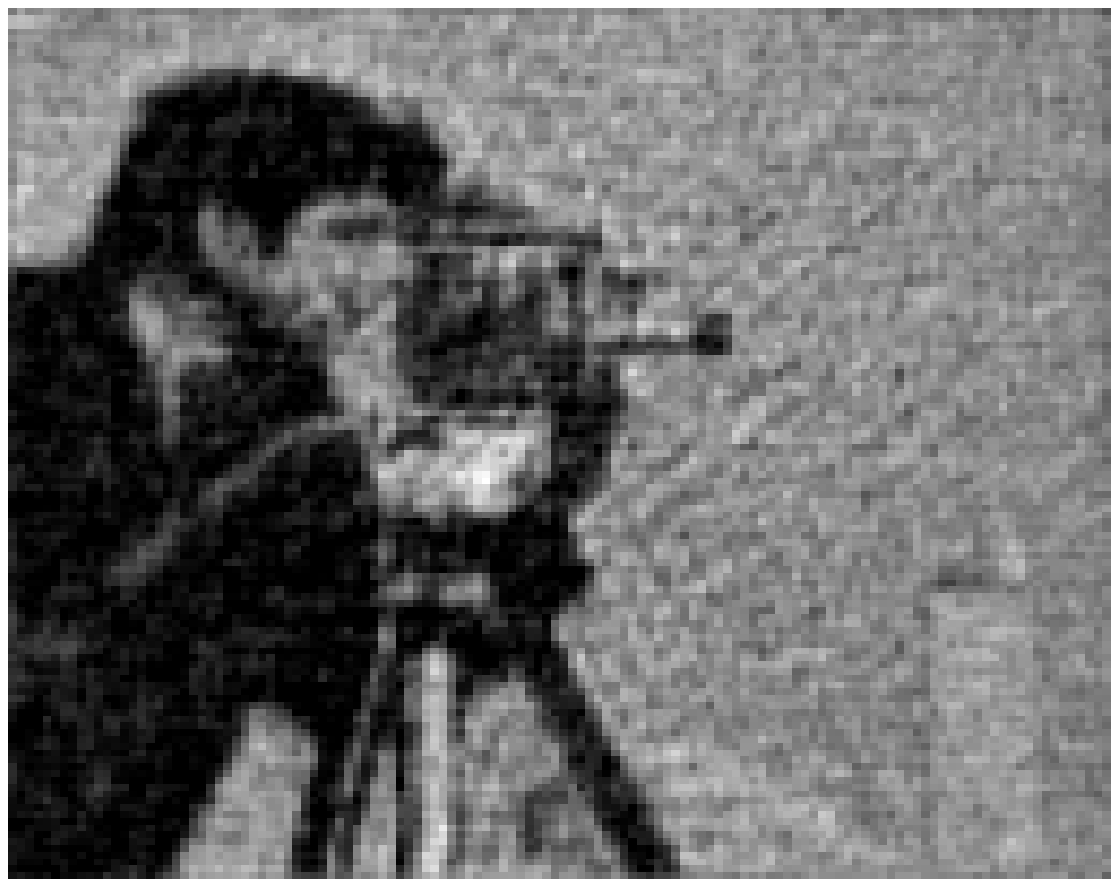}
     \caption{0.1633}
   \end{subfigure}
\begin{subfigure}[b]{.2\textwidth}
     \centering
     \includegraphics[width=\textwidth]{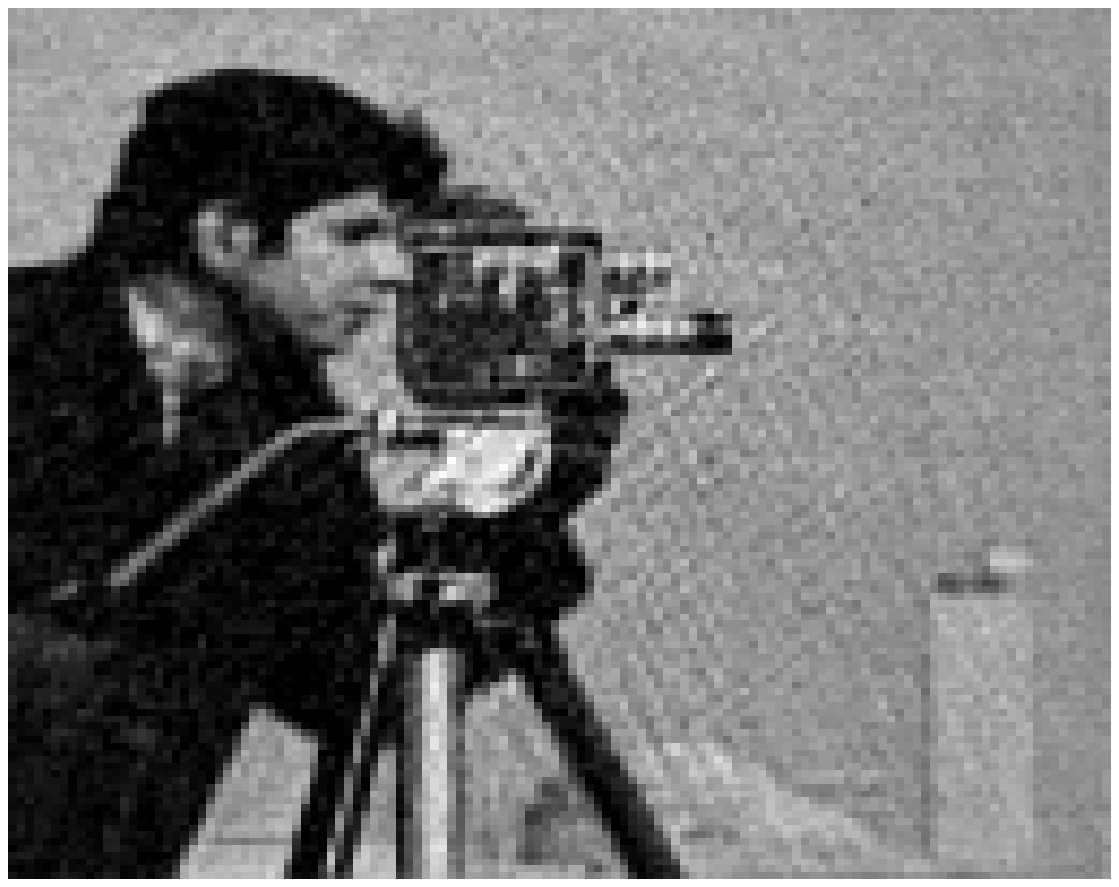}
     \caption{0.0948}
   \end{subfigure}
\begin{subfigure}[b]{.2\textwidth}
     \centering
     \includegraphics[width=\textwidth]{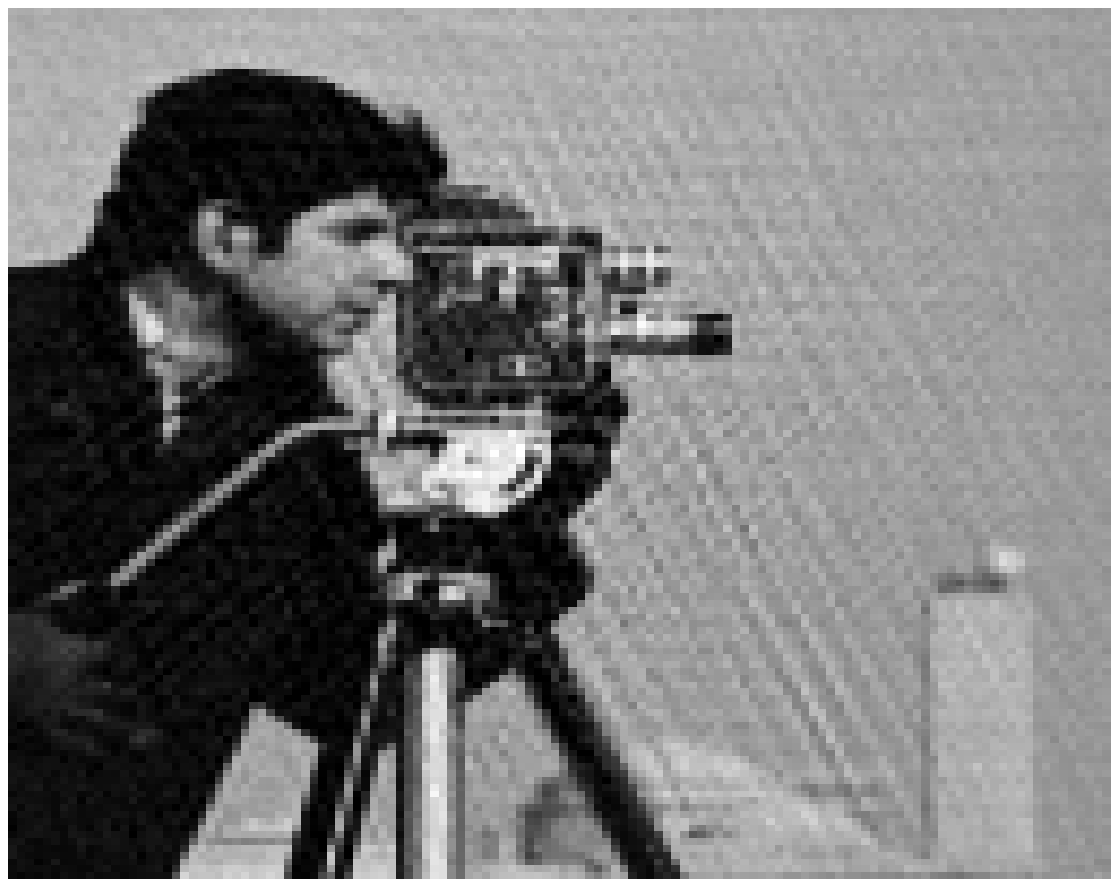}
     \caption{0.0686}
   \end{subfigure}\\
     \begin{subfigure}[b]{.2\textwidth}
     \centering
     \includegraphics[width=\textwidth]{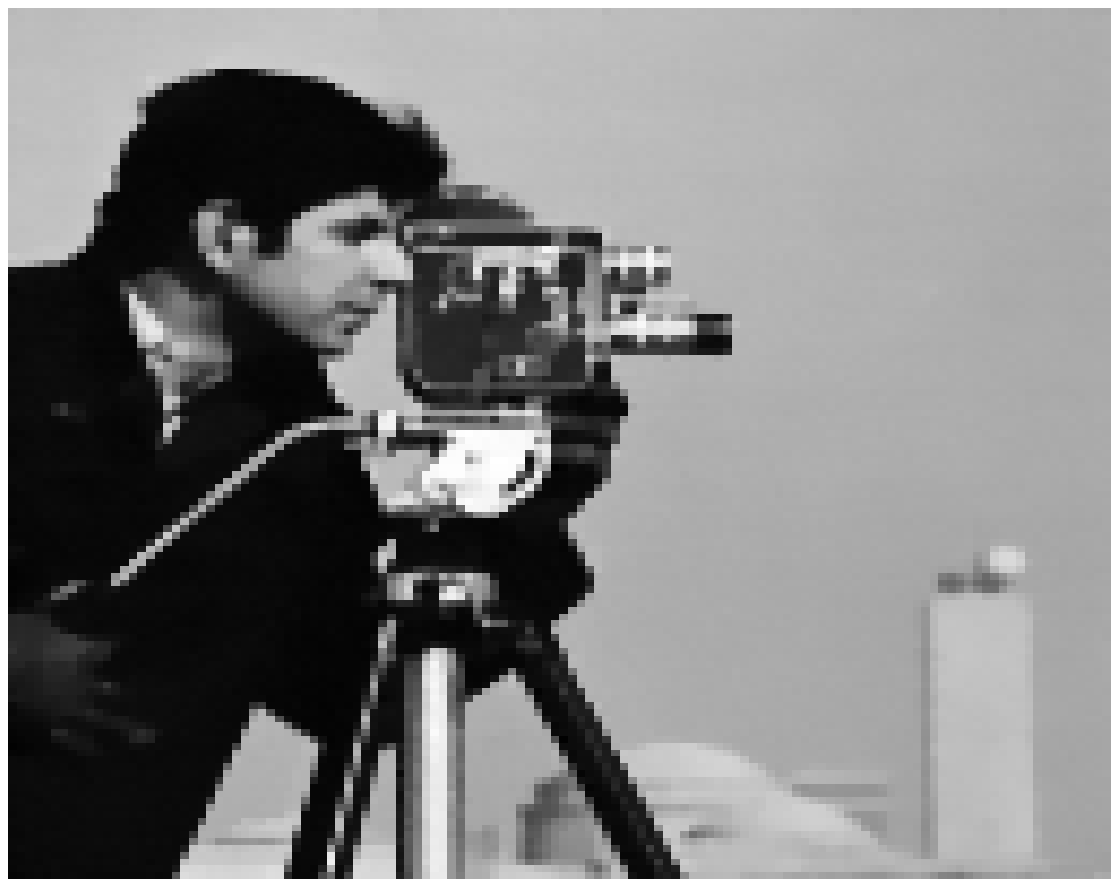}
     \caption{0.0265}
   \end{subfigure}
 \begin{subfigure}[b]{.2\textwidth}
     \centering
     \includegraphics[width=\textwidth]{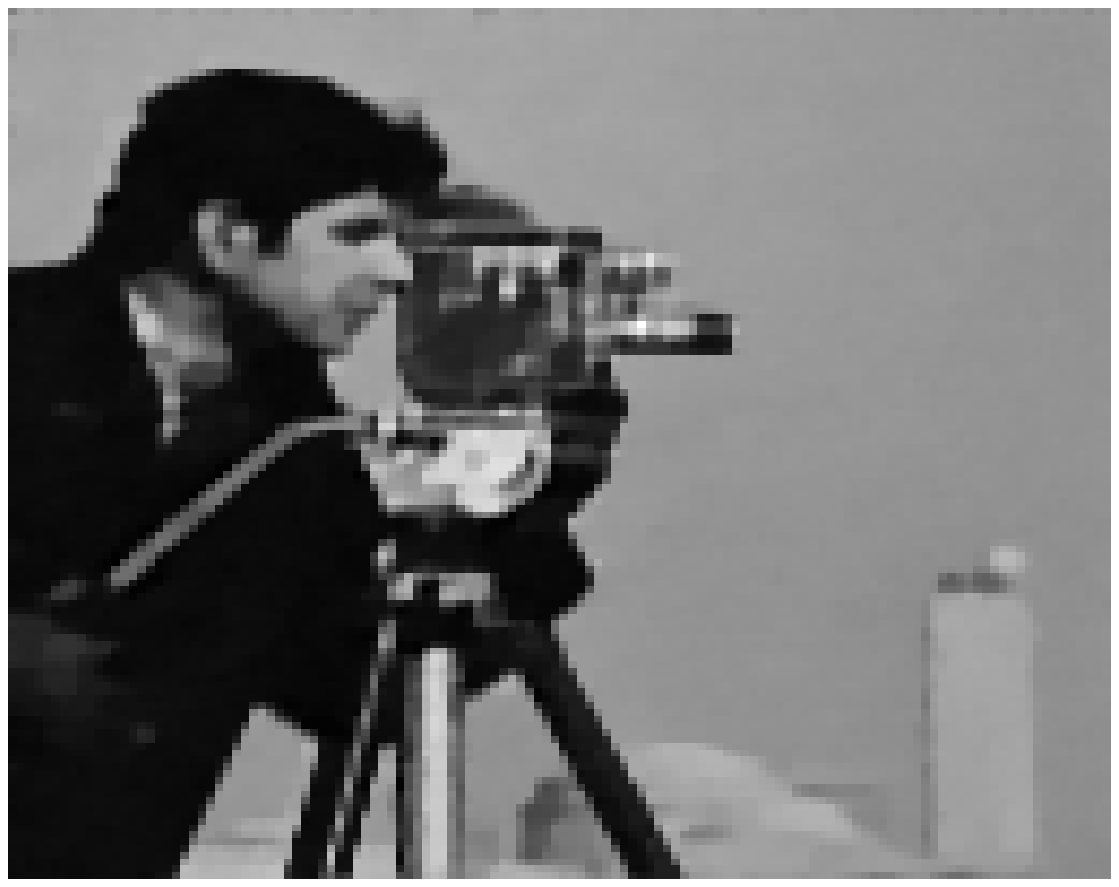}
     \caption{0.0665}
   \end{subfigure}
\begin{subfigure}[b]{.2\textwidth}
     \centering
     \includegraphics[width=\textwidth]{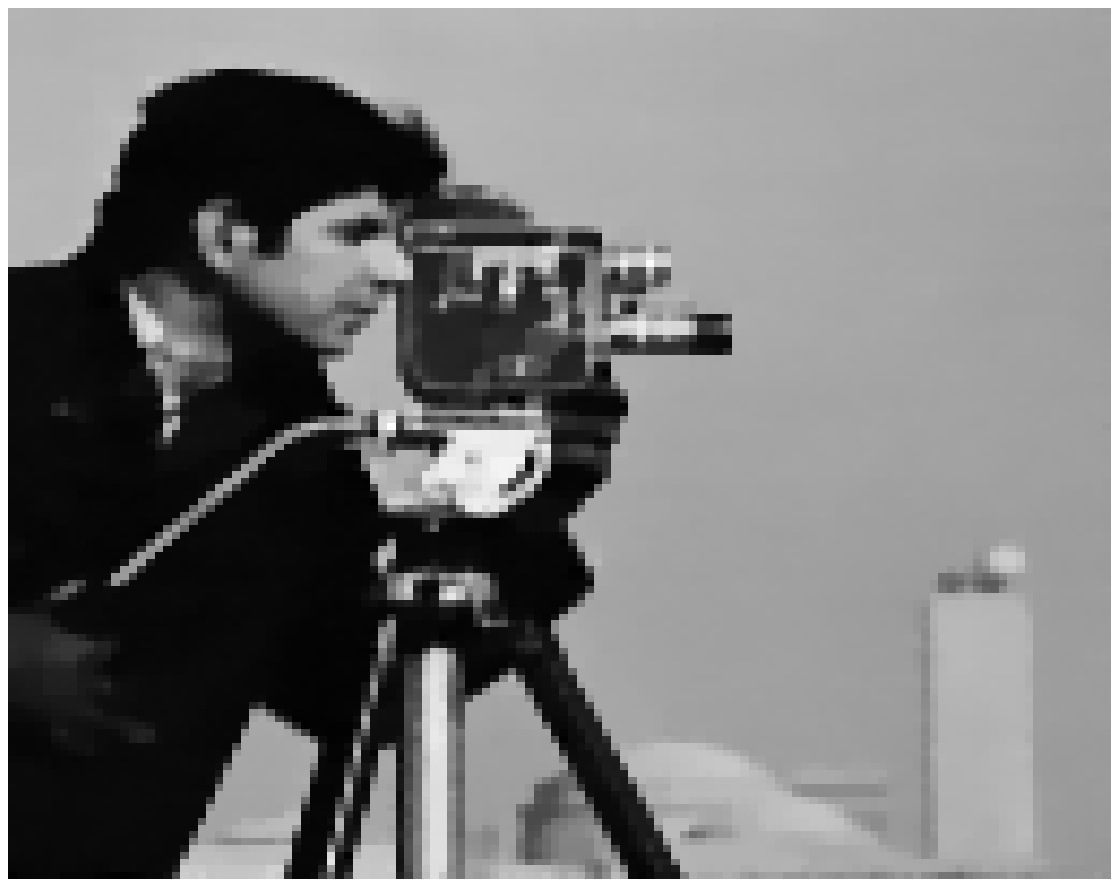}
     \caption{0.0381}
   \end{subfigure}
\begin{subfigure}[b]{.2\textwidth}
     \centering
     \includegraphics[width=\textwidth]{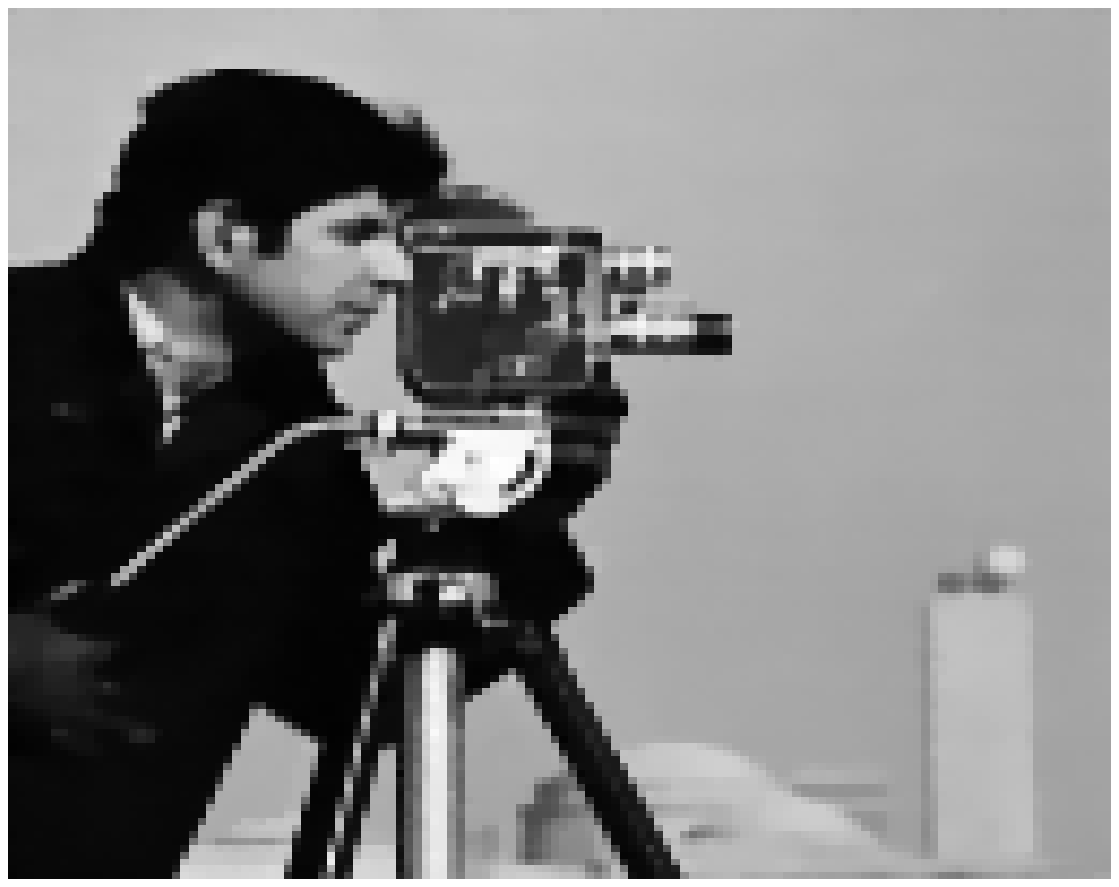}
     \caption{0.0309}
   \end{subfigure}\\
 \begin{subfigure}[b]{.2\textwidth}
     \includegraphics[width=\textwidth]{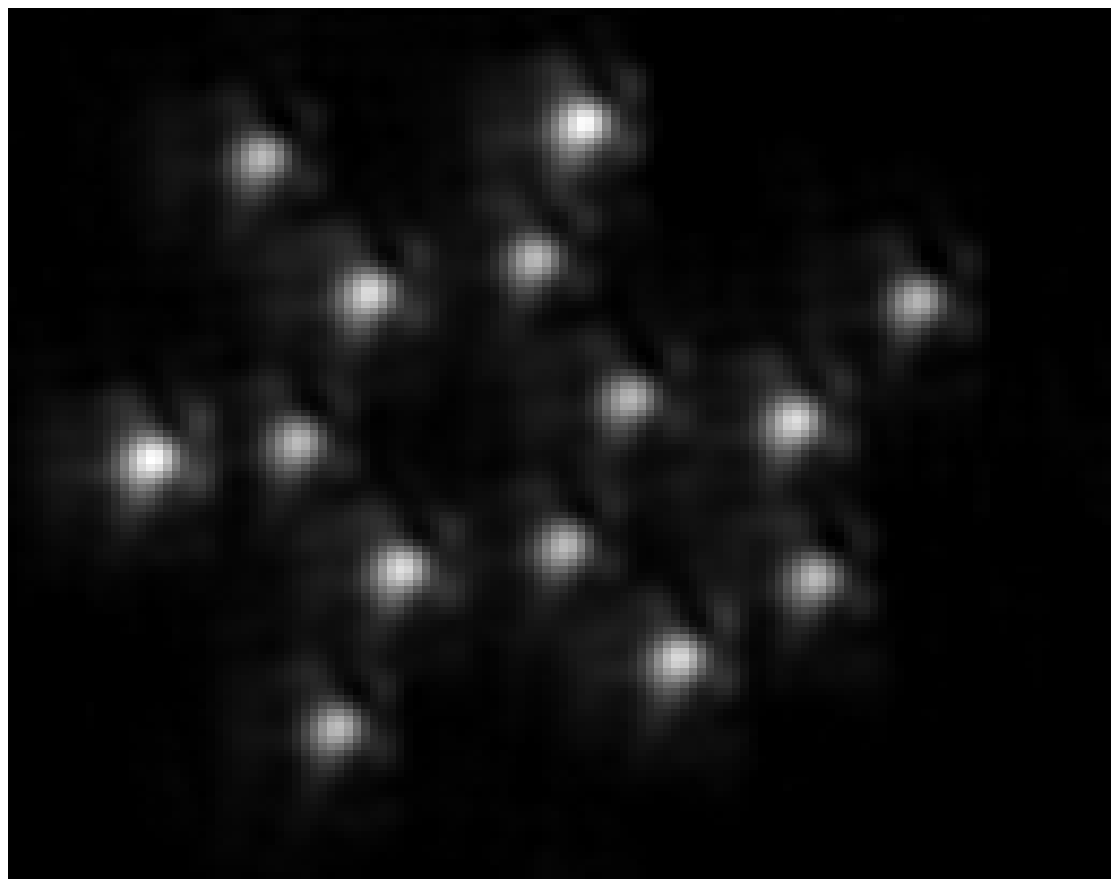}
     \caption{0.2237}
   \end{subfigure}
 \begin{subfigure}[b]{.2\textwidth}
     \centering
     \includegraphics[width=\textwidth]{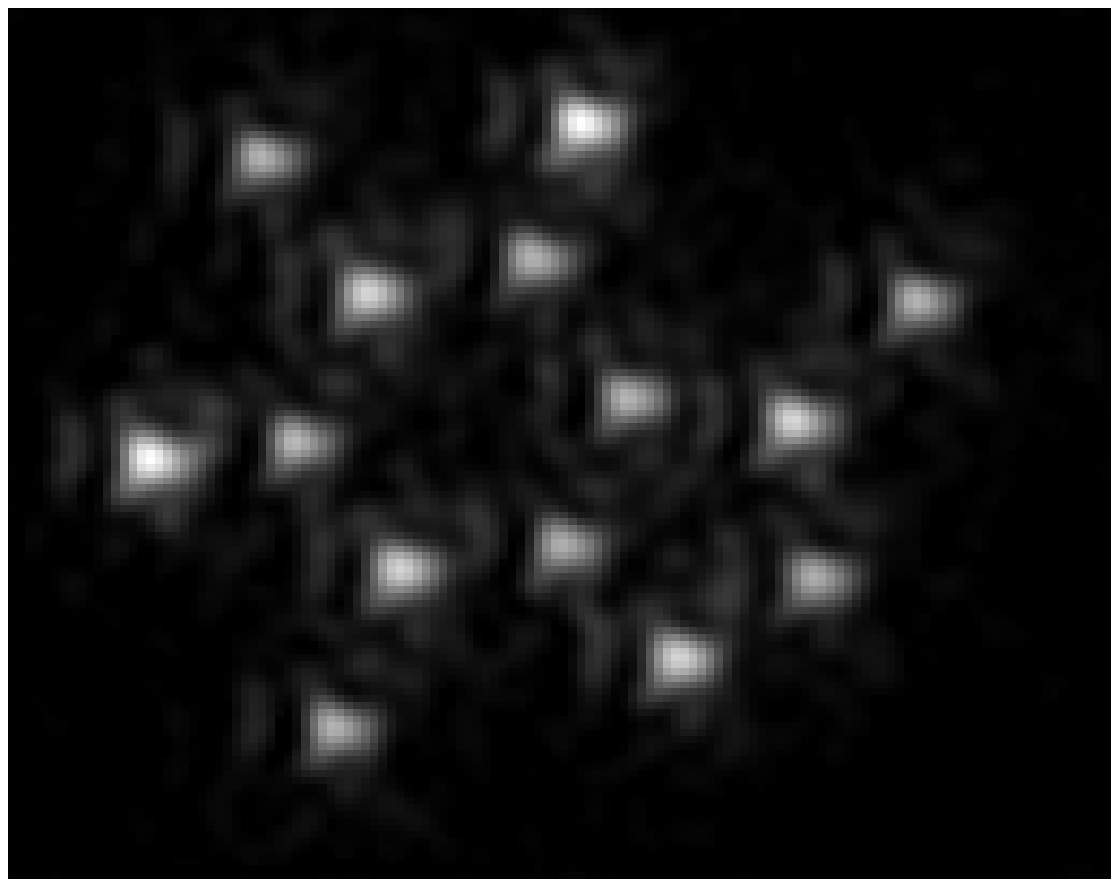}
     \caption{0.2624}
   \end{subfigure}
\begin{subfigure}[b]{.2\textwidth}
     \centering
     \includegraphics[width=\textwidth]{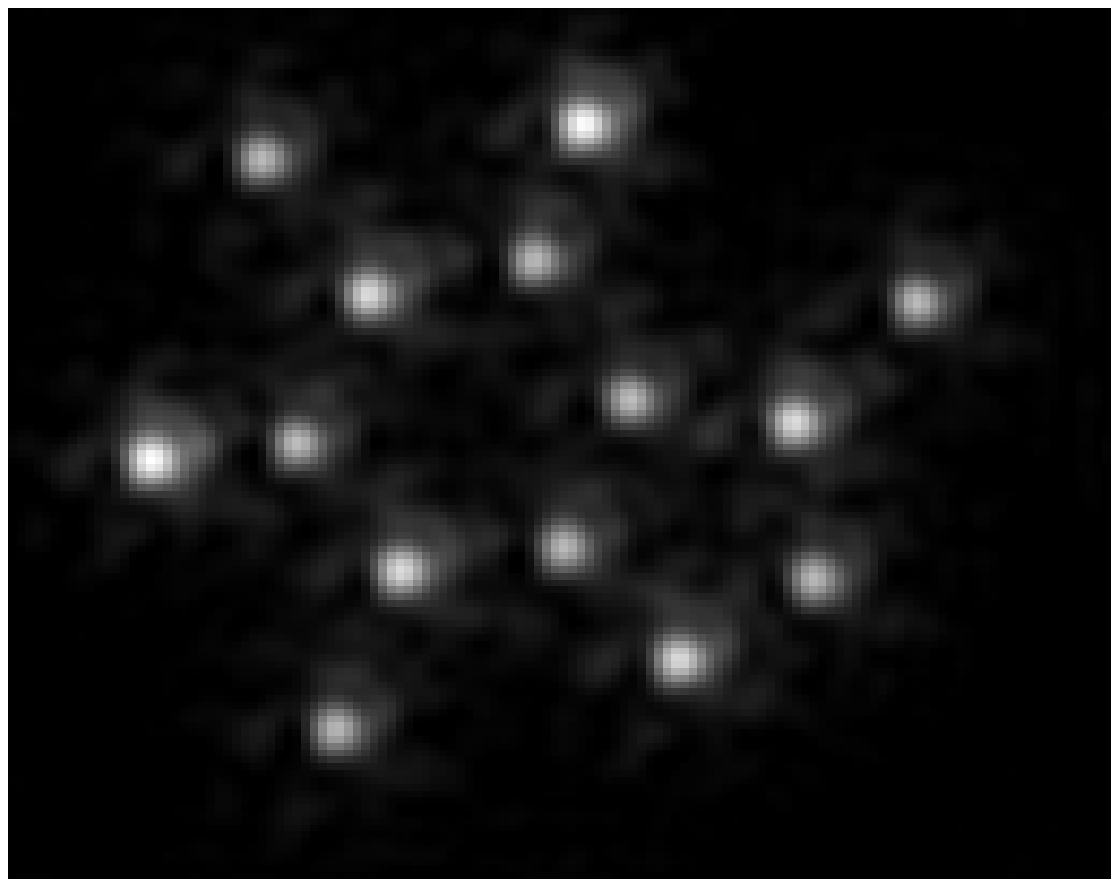}
     \caption{0.2223}
   \end{subfigure}
\begin{subfigure}[b]{.2\textwidth}
     \centering
     \includegraphics[width=\textwidth]{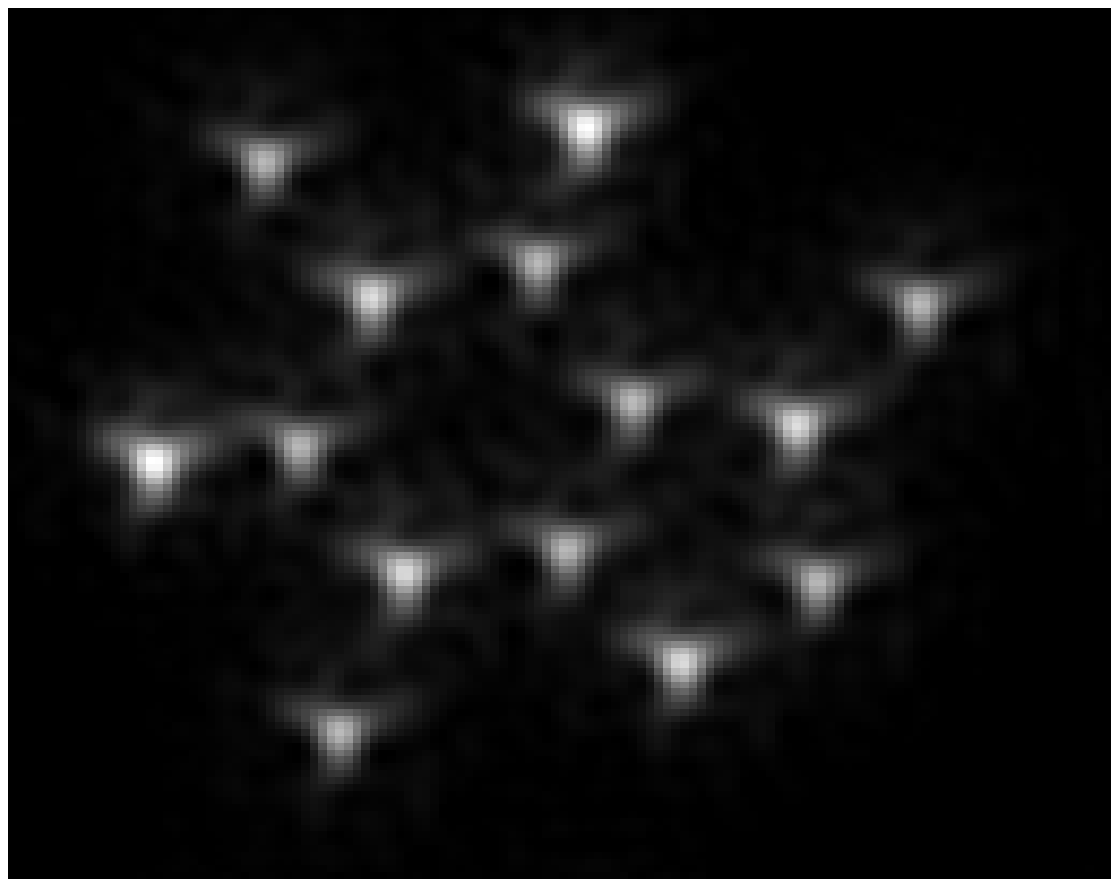}
     \caption{0.2588}
   \end{subfigure}\\
     \begin{subfigure}[b]{.2\textwidth}
     \centering
     \includegraphics[width=\textwidth]{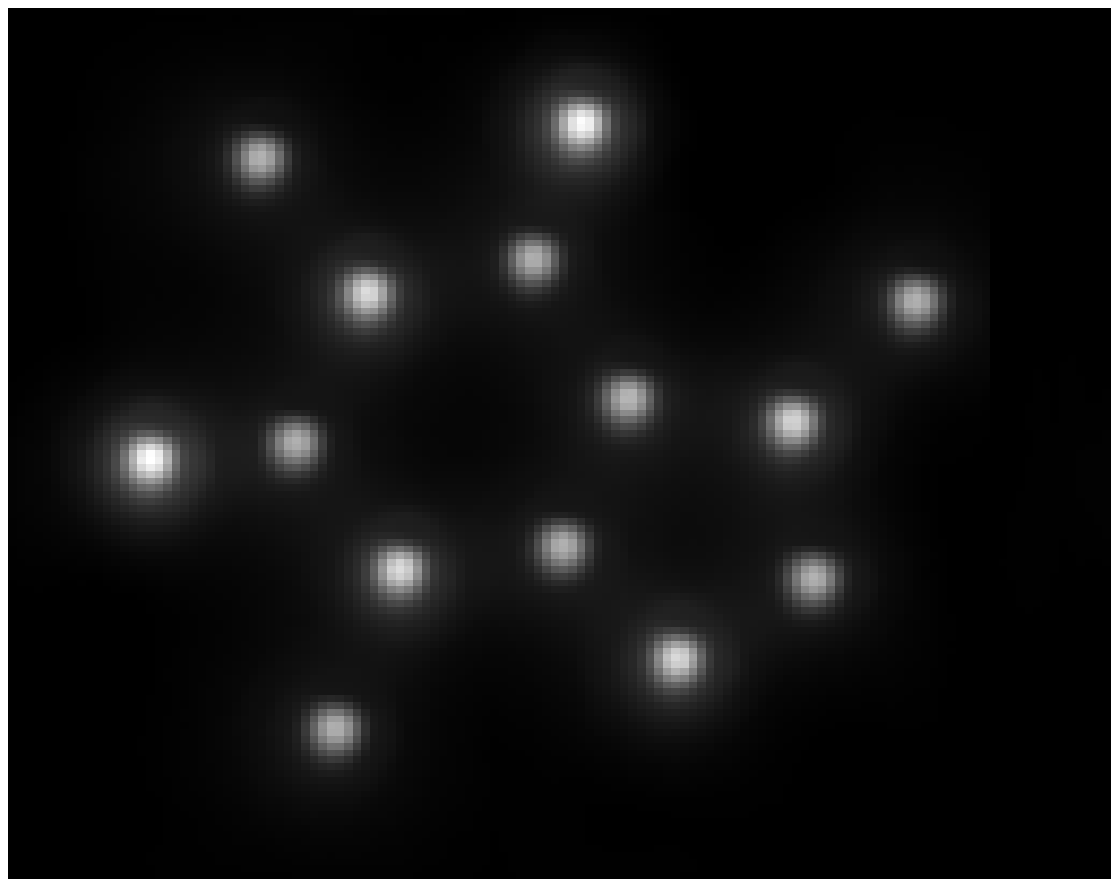}
     \caption{0.1380}
   \end{subfigure}
 \begin{subfigure}[b]{.2\textwidth}
     \centering
     \includegraphics[width=\textwidth]{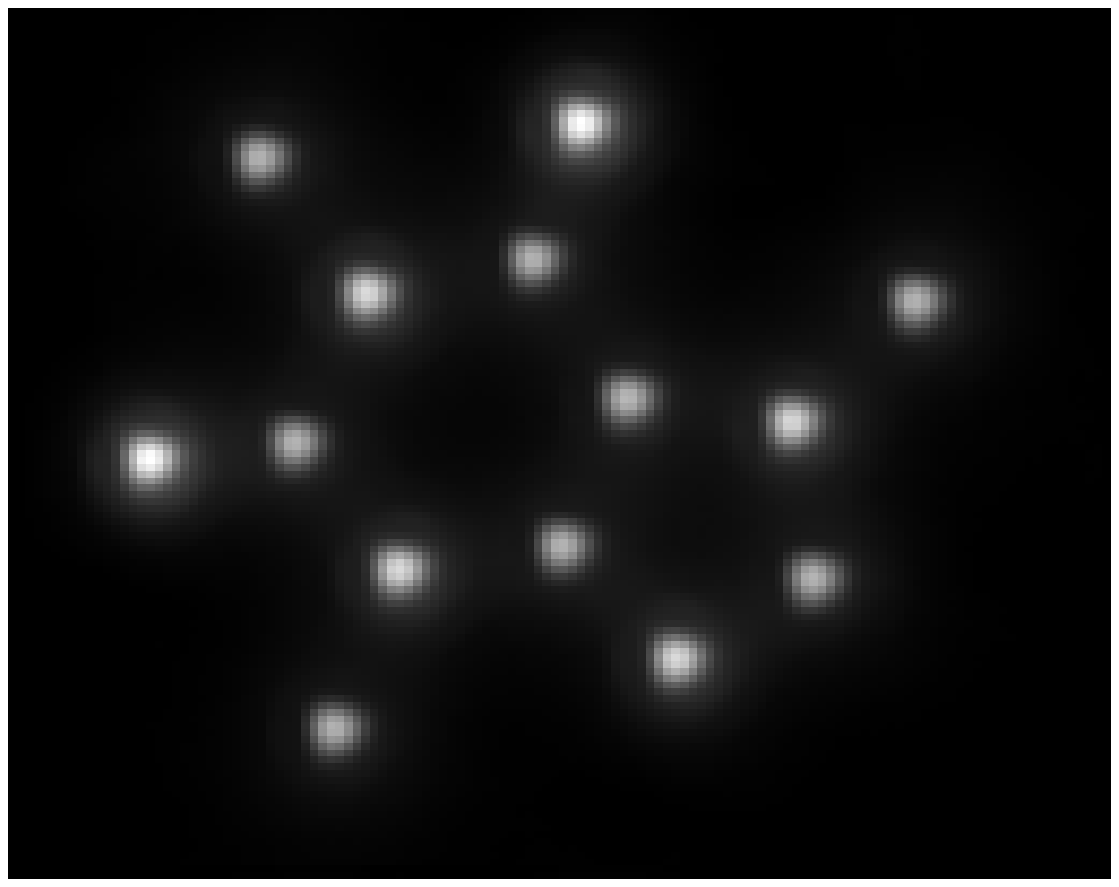}
     \caption{0.1051}
   \end{subfigure}
\begin{subfigure}[b]{.2\textwidth}
     \centering
     \includegraphics[width=\textwidth]{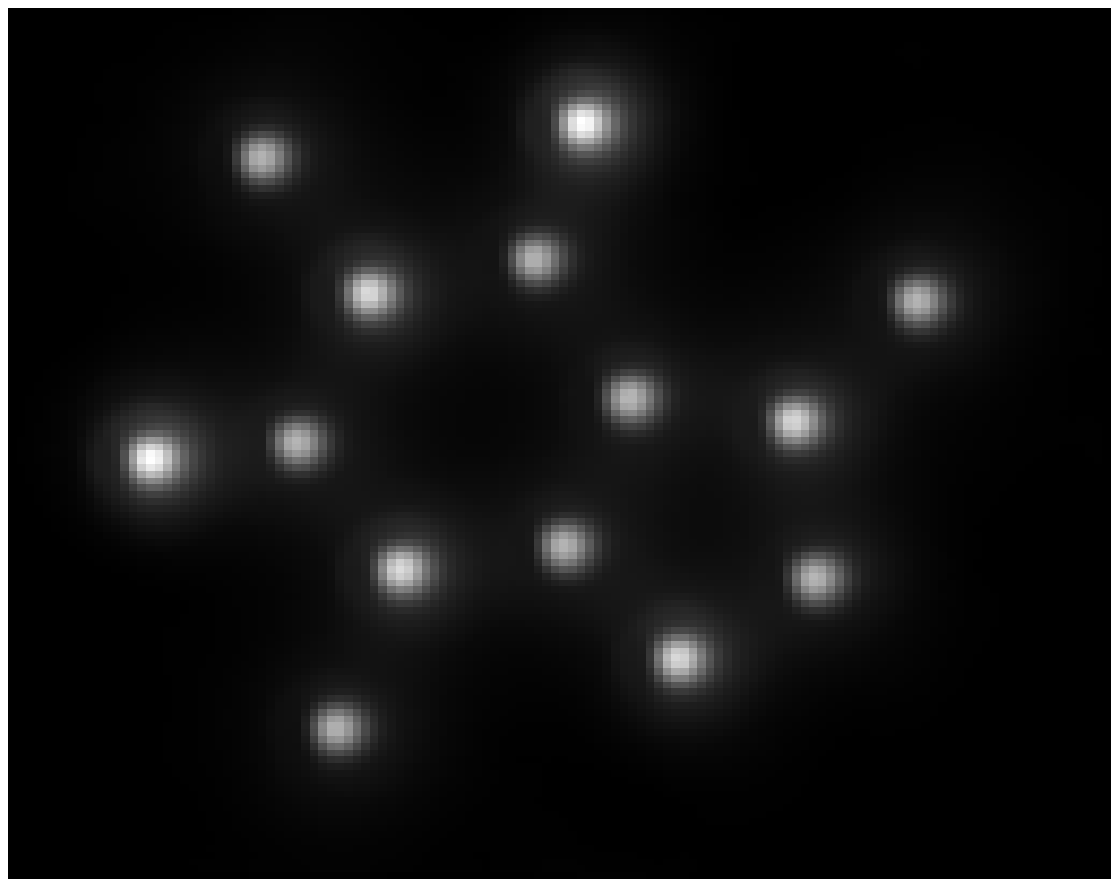}
     \caption{0.0925}
   \end{subfigure}
\begin{subfigure}[b]{.2\textwidth}
     \centering
     \includegraphics[width=\textwidth]{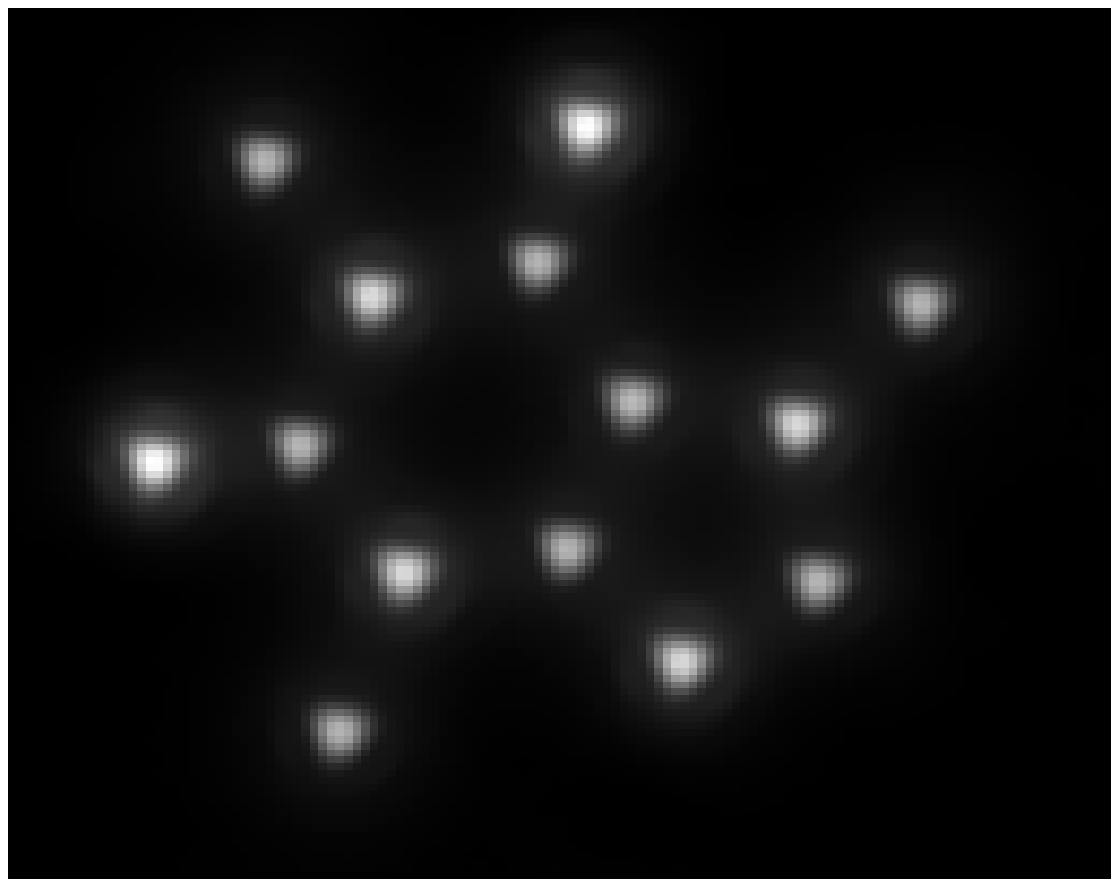}
     \caption{0.1382}
   \end{subfigure}
\caption{Reconstruction of Fourier phase retrieval by HIO (1st and 3rd row) and HIO+RGPS (2nd and 4th row). From left to right, the noise level SNRs are $\infty$ (noiseless), $30,40,50$ respectively.\label{fig:6}}
\end{figure}

\section{Conclusion}

We proposed a unified framework for phase retrieval with prior information via graph projection splitting (GPS) and robust GPS (RGPS). Current solvers only work for either isometric Fourier measurements with special prior information or general measurements without prior information, while our framework allows general measurement matrix and prior information simultaneously. GPS is motivated by the splitting formulation and variable-stacking. By introducing the splitting and graph projection, GPS can flexibly incorporate additional prior information about the solution and each resulting subproblem can be solved easily. RGPS and robust Douglas-Rachford (RDR) for phase retrieval without prior information for noisy measurements are also proposed. Advantages of GPS and RGPS over existing gradient flow-based methods include graph projection step and no line search. We show local convergence of GPS and RGPS for noiseless case without prior. For noisy case, we characterize the reconstruction error around the solution. 

For Gaussian phase retrieval without prior information, compared to other existing methods, GPS shows the sharpest phase transition and RGPS shows more stable reconstruction in various numerical experiments. RGPS outperforms GPS when the number of measurement is large enough. The performance of GPS and RGPS seem less dependent on the initialization than other gradient flow-based nonconvex solvers. RGPS also outperforms RAF for transmission measurement data especially when the number of measurements is small. It can also refine the reconstruction of HIO when TV regularization is added. The inclusion of TV regularization into oversampling Fourier phase retrieval is new and improves the reconstruction quality.

\section*{Acknowledgments}

JL was supported by China Postdoctoral Science Foundation grant No. 2017M620589 and National Natural Science Foundation of China grant No. 11801025. Hongkai Zhao would like to thank the summer visitor program at CSRC.


\end{document}